\documentclass{article}

\usepackage{subfig}
\usepackage{graphicx}
\usepackage{adjustbox}
\usepackage[dvipsnames]{xcolor}
\usepackage{tabularx}
\usepackage{amsmath,amssymb}
\usepackage{amsthm}
\usepackage{dsfont}
\setcounter{MaxMatrixCols}{20}
\usepackage{mathrsfs} 
\usepackage{gensymb}
\usepackage[left=2cm, right=2cm, top=2.5cm, bottom=2.5cm]{geometry}
\usepackage{cleveref}
\usepackage{cases}
\usepackage{mathtools} 
\usepackage{gensymb} 
\usepackage{caption} 
\usepackage{bm} 
\usepackage{cleveref}
\usepackage{numprint} 
\usepackage{marginnote}
\usepackage{multirow} 
\usepackage{multicol} 
\usepackage{numprint}
\usepackage{longtable}
\usepackage{yfonts}
\usepackage{eufrak}
\usepackage{ulem}
\usepackage{siunitx}
\usepackage{rotating}
\usepackage{algorithm}%
\usepackage{algorithmicx}%
\usepackage{algpseudocode}%
\usepackage{titlesec}
\usepackage{url}

\newcommand{\id}[0]{\text{Id}}
\newcommand{\partialder}[2]{\frac{\partial#1}{\partial#2}}

\newcommand{\secondpartialder}[2]{\frac{\partial^2#1}{\partial#2^2}}
\newcommand{\partialderlong}[2]{ \frac{\partial}{\partial#2}\left(#1\right) }

\newcommand{\myw}[0]{{\scriptstyle W}}
\newcommand{\loss}[0]{\mathscr{L}}

\newcommand{\st}{\text{ s.t. }}

\makeatletter
\newcommand{\slantedparallel}{\mathrel{\mathpalette\new@parallel\relax}}
\newcommand{\new@parallel}[2]{
	\begingroup
	\sbox\z@{$#1T$}
	\resizebox{!}{\ht\z@}{\raisebox{\depth}{$\m@th#1/\mkern-5mu/$}}%
	\endgroup
}
\makeatother

\newcommand{\uut}[0]{{\scriptstyle\mathfrak{U}}}
\newcommand{\vt}[0]{\mathrm{v}}
\newcommand{\ut}[0]{\textfrak{u}}
\newcommand{\wt}[0]{\mathrm{w}}
\newcommand{\m}[1]{\underline#1}

\newcommand{\eps}[0]{\varepsilon}

\newcommand{\node}[0]{$n$-ODE}
\newcommand{\Node}[0]{$N$-ODE}
\newcommand{\Varepsilon}{\scalebox{1.3}{$\varepsilon$}}

\newcommand{\appendixsectionformat}{
	\titleformat{\section}
	{\normalfont\Large\bfseries}
	{Appendix \thesection}{1em}{}
}	
\theoremstyle{definition} 
\newtheorem{obs}{Observation}
\newtheorem{theorem}{Theorem}

\newtheorem{example}{Example}
\newtheorem{definition}{Definition}

\newtheorem{proposition}{Proposition}

\theoremstyle{plain}
\theoremstyle{remark}
\newtheorem{remark}{Remark}

\definecolor{raspberry}{rgb}{1,0,0.2}
\definecolor{darkred}{rgb}{0.5,0.1,0.1}
\definecolor{darkgreen}{rgb}{0.1,0.6,0.2}


\newcommand{\mg}[1]{\color{darkgreen}#1 \color{black}}

\newcommand{\rev}[1]{{\color{black} #1}}

\title{\rev{Model reduction of parametric ordinary differential equations via autoencoders: representation properties and convergence analysis}}

\begin{document}

\author{
	Enrico Ballini$^1$, 
	Marco Gambarini$^{2,3}$, 
    Alessio Fumagalli$^1$,\\
    Luca Formaggia$^1$, 
	Anna Scotti$^1$, 
	Paolo Zunino$^1$
}

\date{\today}

\footnotetext[1]{MOX, Department of Mathematics, Politecnico di Milano, Piazza Leonardo da Vinci 32, 20133 Milano, Italy}
\footnotetext[2]{CMCC Foundation - Euro-Mediterranean Center on Climate Change, Via Marco Biagi 5, 73100 Lecce, Italy}
\footnotetext[3]{RFF-CMCC European Institute on Economics and the Environment, Via Solari 11, 20144 Milano, Italy}

\maketitle

\begin{abstract}
 We propose a reduced-order modeling approach for nonlinear, parameter-dependent ordinary differential equations (ODE). Dimensionality reduction is achieved using nonlinear maps represented by autoencoders. The resulting low-dimensional ODE is then solved using standard integration in time schemes, and the high-dimensional solution is reconstructed from the low-dimensional one. 
We investigate the \rev{architecture} of neural networks for constructing effective autoencoders \rev{that hold necessary properties to reconstruct the input manifold with exact representation capabilities}. We study the convergence of the reduced-order model to the high-fidelity one. Numerical experiments show the robustness and accuracy of our approach \rev{in different scenarios}, highlighting its \rev{effectiveness in highly complex and nonlinear settings} without sacrificing accuracy. Moreover, we examine how the reduction influences the stability properties of the reconstructed high-dimensional solution.   
\end{abstract}


\section{Introduction}
Ordinary differential equations and partial differential equations (PDEs) are ubiquitous in the modeling of time-dependent phenomena in a wide range of disciplines including, among others, fluid dynamics \cite{Thompson1988}, chemical kinetics \cite{Steefel1994}, geological simulations \cite{Nordbotten2012} and solid continuum mechanics \cite{Marsden2012}. A typical approach to solving PDEs involves two steps: the first is to discretize the spatial variables, resulting in a high-dimensional system of ODEs, while the second step regards the integration in time. For simplicity, in this work we refer only to ODEs, with the understanding that the methodologies developed are equally applicable to PDEs in the sense described above. 

The models that capture the phenomena of interest may depend on some parameters, denoted by $\mu$. This variability may reflect uncertainty in physical properties that are difficult to measure, or may arise from the need to explore different scenarios, see, e.g. \cite{Ballini2025}.
When the parameters vary, we obtain parametric ordinary differential equations. In many applications, such as uncertainty quantification, optimization, or inverse problems, it is often necessary to solve these ODEs several times for a wide range of parameter values. Direct computation can be prohibitively expensive, requiring the community to develop reduced‐order modeling techniques that alleviate this computational burden.


In the past few years, scientific machine learning has increasingly incorporated neural networks, like autoencoders for data compression, into reduced-order modeling and methods for improving the efficiency of ODE solvers.
Autoencoders have found widespread application in diverse fields and for multiple purposes. For example, in \cite{Gonzalez2018, Murata2019, Lee2020, Fresca2021, Franco2022, Kadeethum2022}, autoencoders have been employed to compress and subsequently reconstruct discrete solutions that approximate the governing physical equations on a computational grid, possibly using a linear interpolation of the low-dimensional solution \cite{SabziShahrebabaki2022}.


A key strength of autoencoders lies in their ability to achieve maximal information compression, effectively addressing the challenge of slow Kolmogorov $n$-width decay~\cite{Franco2022}. The Kolmogorov $n$-width measures how well a nonlinear solution manifold can be captured by an $n$-dimensional linear subspace; a slow decay indicates that many linear modes are needed to represent the manifold accurately. In contrast, nonlinear autoencoders can approximate the underlying nonlinear manifold with the minimum number of parameters required, thus overcoming a fundamental limitation of the traditional proper orthogonal decomposition (POD) approach~\cite{Franco2022}, where a linear combination of basis vectors is adopted to describe the manifold. This advantage comes from the ability of neural networks to learn nonlinear maps that describe high-dimensional structures using only a few latent variables.
Moreover, autoencoders handle discontinuities effectively \cite{Lee2020} and are robust to non-affine parameter dependencies, unlike the well-established POD.

Neural networks have emerged as a powerful tool for the solution of ODEs. A comprehensive survey of how machine learning techniques can improve ODE solvers is provided in \cite{Legaard2023}.
Further insight in this area is provided in \cite{Yiping2018}, which highlights the similarities between residual neural networks \cite{He2016} and the discrete time-stepping of ODE solvers.
Subsequently, the authors of~\cite{Chen2018} proposed the idea of learning the right-hand side of an ODE directly from the data. They also developed an efficient alternative to standard backpropagation for computing gradients of the loss function with respect to network weights using the adjoint sensitivity method.
The Ph.D. thesis by Kidger~\cite{Kidger2022} offers a detailed exploration of neural differential equations, and is a valuable resource not only for its own contributions but also for the extensive references it contains.
In a related work, \cite{Rubanova2019} demonstrated that recurrent neural networks can effectively model ODE systems even when training data are sampled at irregular time intervals.
Furthermore, \cite{Chen2020} proposed a framework for modeling discontinuous changes, called \textit{events}, in a time-dependent system. 
More recently, \cite{Peter2023} proposed a specific autoencoder architecture for time-dependent PDEs that learns a time-dependent vector field associated with input spatial points without the need for a spatial mesh.  
The work in \cite{Li2025} uses a combination of a convolutional autoencoder to identify the reduced solution and an additional neural network that approximates a one-step time marching procedure in the reduced space. This idea has been tested on many different scenarios in which physical equations are solved.

In the context of reduced-order modeling, the authors in \cite{Hasegawa2020} and \cite{Gupta2022} employ recurrent neural networks to advance in time the reduced solution obtained from an encoder. Rather than training the autoencoder to approximate the identity map, one can design it to predict the solution at the next time step based on previous states. This strategy is illustrated in \cite{Pant2021}, where the autoencoder takes the form of a UNet \cite{Ronneberger2015}. In \cite{Fresca2023} and similarly in \cite{Conti2024, Li2024, Kumar2021}, a feedforward network first computes the reduced representation, and then this state is propagated over a short time interval by a long-short-term memory (LSTM) network. More recently, the work of \cite{Fu2023} has explored the use of self-attention neural networks to perform time evolution.  
The study in \cite{Farenga2025} employs an unknown low-dimensional ODE to approximate the solution of a related high-dimensional ODE. In this framework, the encoder is used solely to specify the initial condition of the low-dimensional system.

To give a comprehensive overview, it is worth mentioning other approaches based on neural networks that can be used to construct surrogate models, such as physics‐informed neural networks \cite{Raissi2019}, the use of graph neural networks to reproduce the solution in parameterized meshes \cite{Franco2023b}, mesh-informed neural networks \cite{Franco2023a}, and latent dynamic networks to directly estimate a quantity of interest from latent dynamics \cite{Regazzoni2024}. 
A particularly active area of research focuses on the approximation of the operators that define the governing differential equations. Several well-established architectures fall into this category, such as the DeepONet \cite{Lu2021} and the Fourier neural operator \cite{Li2020b}, wavelet neural operator \cite{Tripura2023} and kernel operator learning \cite{Ziarelli2025}.

We propose the \textit{Dynamical Reduced Embedding} (DRE) method based on an autoencoder approximated by fully connected neural networks to define the dynamics of a low-dimensional ODE, which is subsequently learned from the data. Once the model is constructed, it provides an approximation of the solution to the high-dimensional ODE via the efficient solution of a low-dimensional ODE, followed by a reconstruction step to recover the solution in the high-dimensional space.
Unlike architectures that rely on temporal dependencies and require sequential training (such as backpropagation through time), the use of an autoencoder allows independent processing of each input instance. This enables a straightforward parallelization of the training, as there are no temporal causality constraints to enforce \cite{Goodfellow2017}.


\rev{Several recent approaches employ data-driven representations to approximate the solution manifold and its dynamics. On one hand, modal decomposition techniques such as dynamic mode decomposition (DMD) provide a spectral characterization of nonlinear dynamics by computing the eigendecomposition of an approximating linear operator and can be interpreted as finite-dimensional approximations of Koopman spectral analysis. While highly effective for extracting coherent dynamical structures from time-series data, these approaches rely fundamentally on linear representations and are not designed to approximate nonlinear solution manifolds or to provide convergence guarantees for reduced-order models. On the other hand, recent neural-network-based approaches construct nonlinear approximation manifolds or learn reduced dynamics directly from data. For instance, ANN-augmented projection-based reduced-order models aim at mitigating the Kolmogorov barrier by combining classical projection with nonlinear manifolds learned from data \cite{Barnett2023}, while operator-inference frameworks, although generally related to projection and regression-driven approaches, can be combined by learning low-dimensional dynamics through rollout-based training to improve robustness and stability properties \cite{Uy2023}. Our work is aligned with these contributions in its use of nonlinear representations and data-driven reduced dynamics, and is closely related to the framework proposed by Farenga et al.~\cite{Farenga2025}, where a high-dimensional ODE is linked to a low-dimensional one.


In a broad sense, the main originality of the present work lies in the joint analysis of the representation error and dynamical approximation error and the characterization of the architectural conditions ensuring injectivity of the encoder.
More precisely, we develop a mathematical framework for autoencoder-based model reduction that explicitly distinguishes representation and approximation errors, characterizes the topology of the solution manifold, and establishes convergence and stability results for the fully approximated reduced system. In particular, unlike approaches that treat the latent dynamics as a black-box model, our formulation introduces constrained reduced dynamics that enable rigorous analysis. 
The theoretical results are supported by representative numerical experiments inspired by applications.}

The paper is organized as follows.
In \Cref{sec:method}, we present the core of the reduced-order model strategy along with the main theoretical results. The remaining sections are dedicated to studying the methodology in detail. In particular, \Cref{sec:autoencoders} investigates the conditions under which a solution manifold can be compressed by an autoencoder with null representation error. Then \Cref{sec:ode_red} expands this foundation by formalizing the solution manifold of a parametric ODE as a low-dimensional topological manifold and deriving the minimal latent dimension required for lossless compression. Time discretization is then addressed in \Cref{sec:time_discretization}, where a study of stability and convergence in time is presented. The fully approximated problem based on the use of neural networks is presented in \Cref{sec:nn_approx}, along with an analysis of the global error. Finally, numerical experiments are reported in \Cref{sec:numerical_studies} to validate the proposed approach.

\section{Problem setup}\label{sec:method}	
We consider a parametric ODE of $N$ components, called \textit{\Node}, in the time interval $(0, T]$, with $T>0$, in the form
\begin{align}\label{eq:Node_mu}
	\begin{aligned}
		&\dot{u}_N = F_N(u_N; \mu), \\
		&u_N(0) = u_{N0}(\mu).
	\end{aligned}
\end{align} 
where $u_N : [0, T] \to \mathbb{R}^N$ is the \textit{N-solution}, $F_N: \mathbb{R}^N\to\mathbb{R}^N$ is a possibly nonlinear right-hand side, and $u_{N0}$ is the given initial condition. The parameters $\mu$ vary in a compact set with non-empty interior $\mathcal{P}\subset \mathbb{R}^{a-1}$, with $a-1\geq1$ being the number of parameters. 
Without loss of generality, we restrict our analysis to the autonomous case \cite{Lambert1999}. Unless it is important to remark it, we drop the dependence on $\mu$ in the notation and simply write the parametric ODE as
\begin{align}\label{eq:Node}
	\begin{aligned}
		&\dot{u}_N = F_N(u_N), \\
		&u_N(0) = u_{N0}.	
	\end{aligned}
\end{align} 

Since \eqref{eq:Node} can be computationally expensive due to $N$ being large, for instance if the system results from a prior space discretization of a PDE, we aim to solve a smaller ODE, called \textit{\node}, whose size is $n\ll N$
\begin{align}\label{eq:node_0}
\begin{aligned}
	&\dot{u}_n = F_n(u_n), \\
	&u_n(0) = \Psi'(u_{N0}),
\end{aligned}
\end{align}
where $\Psi'$ is a map that compresses the $N$-solution. It will be detailed in the following sections. 
Subsequently, from $u_n$, called the \textit{n-solution}, we reconstruct a high-dimensional solution, possibly approximated, through a map $\Psi$, i.e. $u_N = \Psi(u_n)$. The map $\Psi$ will be detailed in the following sections. We generally call \textit{reconstructed solution} the $N$-solution obtained from the $n$-solution. It can be equal to the original $N$-solution of \eqref{eq:Node} or it can differ due to approximations introduced, for example, by the time discretization or the use of neural networks.
It should be clear from the context whether it is the approximated solution or the exact one. 

Equation \eqref{eq:node_0} is solved discretizing the time axis with a constant timestep size, $\Delta t$, so $t_k = k\Delta t$, $k=0, \ldots, K$.
The numerical solution is obtained through a discrete-time integration scheme, which is characterized by its difference operator, $\mathscr{L}_n$ \cite{Lambert1999}. For a linear multistep method with $P$ steps (including also Forward Euler (FE) for $P=1$), the difference operator is
\begin{equation}\label{eq:lmm_diff_operator}
	\mathscr{L}_n(u_n^{k+1},u_n^k,\ldots,u_n^{k+1-P}, F_n, \Delta t) = -\sum_{p=0}^{P}\alpha_p u_n^{k+1-p} + \Delta t\sum_{p=0}^{P}\beta_p F_n(u_n^{k+1-p}), \quad k = P-1, P, \ldots
\end{equation}
where $\alpha_p$ and $\beta_p$ are coefficients of the time integration scheme and $u_n^k$ is the solution at time step $k$.
The maps $\Psi'$, $\Psi$, $F_n$ will be approximated by neural networks $N_{\Psi'}$, $N_{\Psi}$, $N_{F_n}$, respectively.
This leads to our reduced-order modeling strategy that consists of finding an approximation of \eqref{eq:Node} by solving the discrete problem $\mathfrak{P}(\uut)$ 
\begin{align}
	\mathfrak{P}(\uut) =
	\begin{cases}\label{eq:fully_approximated_0}
		\uut_N^{k+1} = N_\Psi(\uut_n^{k+1}), \\
		\mathscr{L}_n(\uut_n^{k+1},\uut_n^{k},\ldots,\uut_n^{k+1-P}, N_{F_n}, \Delta t) = 0, \quad k=P-1,P,\ldots \\
		\uut_n^0 = N_{\Psi'}(u_N^0).
	\end{cases}
\end{align}
where $\uut = (\uut_N, \uut_n)$ and $\uut_n$, $\uut_N$ represent, respectively, the discrete $n$-solution and the reconstructed one. 
We call the strategy of solving \eqref{eq:Node} through \eqref{eq:fully_approximated_0} DRE.

It is now important to study the properties of the proposed method. We are particularly interested in identifying the conditions under which $\uut_N^{k} \to u_N(t_{k})$, in understanding how to construct and train $N_{\Psi'}$, $N_{\Psi}$, and $N_{F_n}$ with the highest possible accuracy, and in analyzing the stability properties of $\mathfrak{P}$.

\subsection{\rev{Detailed outline}}
For the reader's convenience, we summarize \rev{in this section what we consider to be the most relevant topics of this work}. We do not focus here on precise notation or detailed assumptions and definitions of the framework; these will be introduced in the subsequent sections. Instead, \rev{given the large number of topics covered in this work,} we want to informally provide the \rev{main outline and key issues addressed} in the order of exposition in the text \rev{to facilitate the readability of this work}. 
\begin{itemize}
	\item  A linear--nonlinear autoencoder can reproduce with null representation error manifolds described by $f = f_V + f_{V^\perp}$, where $f_{V^\perp} \perp f_V$ (\textbf{\Cref{th:linear_nonlinear}}). This proposition justifies the use of a linear encoder. \\
    However, for a generic manifold, a nonlinear encoder is required and the reduction must be performed at the last layer of the encoder to ensure a null representation error (\textbf{\Cref{th:reduction}}).
	\item \textbf{\Cref{th:manifold}.}{
			\textit{$a$-manifold.}} Under the assumptions of well-posedness of the \Node, continuity of $F_N$ and $u_{N0}$ with respect to $\mu$, and local injectivity of the map $\mu \mapsto u_N$, the set $\{u_N(t;\mu)\}_{t \in [0,T],\ \mu \in \mathcal{P}}$ is an $a$-manifold. Moreover, the minimum latent dimension is $n \leq 2a+1$ (\textbf{\Cref{th:latent_dim}}).
	\item \textbf{\Cref{th:well_posedness_node}.} \textit{Well-posedness of the reduced problem.} Under proper continuity assumptions on the autoencoder, the reduced problem is well-posed.
	%
	%
	%
	\item We present two training strategies for $N_{F_n}$: one offers the possibility to increase the accuracy of the reduced-order model by setting $\Delta t < \Delta t_\text{train}$, the other allows for a trivial parallelization in time of the training. 
	\item \textbf{\Cref{th:global_convergence}.} \textit{Global convergence.} Assuming that enough training data are available, that the global minium of the loss function is reached, that convergent discretization schemes for both the \Node\ and the \node\ are used, and that proper neural network architectures are used, we have the following bound
		\begin{equation*}
			\max_{\mu} \|\m u_N - \m \uut_N \|_{\infty} \leq L_{\Psi}(\eps_{\Psi'}(\myw) + \eps_{F_n}(\Delta t_\text{train},\myw)T) e^{L_{F_n} T} + L_{\Psi}C_{1}\Delta t^P + \eps_{\Psi}(\myw),
		\end{equation*}
		so, for $|\myw|\to\infty$ and $\Delta t, \Delta t_{\text{train}} \to 0$, we have $\max_{\mu} \|\m u_N - \m \uut_N \|_{\infty} \to 0$. Here, $\myw$ is the total number of trainable weights, $\eps_{\Psi'}$, $\eps_\Psi$, $\eps_{F_n}$ are approximation errors, $L_\Psi$ and $L_{F_n}$ are Lipschitz constants, and $C_1$ a positive constant.
\end{itemize}

\section{Autoencoder-based dimensionality reduction}\label{sec:autoencoders}
In this section, we present the mathematical foundations for using autoencoders for nonlinear model reduction. Our focus is on understanding how the characteristics of the data manifold impose constraints on the design of the autoencoder architecture. To this end, we introduce the notions of representation error and approximation error, which allow us to distinguish between inaccuracies due to architectural limitations and those arising from the training of neural networks. Through concrete examples, we explore different encoder–decoder configurations and analyze the classes of manifolds they are capable of reconstructing exactly. These insights are essential to ensure that the autoencoder accurately captures the structure of the solution manifold, which is an important prerequisite for constructing a reliable reduced order method in the subsequent analysis.

\subsection{Neural networks}\label{sec:neural_network}  
The main goal of this section is to fix the notation. Taking into account a layer $\ell \in 1, \ldots, L$, we denote by $n_\ell$ its size, by \( y^{(\ell)} \in \mathbb{R}^{n_\ell} \) its output, by \( W^{(\ell)} \in \mathbb{R}^{n_\ell \times n_{\ell-1}} \) and \( b^{(\ell)} \in \mathbb{R}^{n_\ell} \) its trainable weights and biases, and by \( \sigma^{(\ell)} \) its nonlinear activation \rev{function, except for the last one, which is set to be the identity, $\sigma^{(L)} = \id$. The activation functions are possibly parameterized by $\myw_\sigma^{(\ell)}$. Throughout the paper, we assume the use of monotonic activation functions.} 
Each layer is constituted by an affine transformation followed by a nonlinear activation:
\begin{equation}\label{eq:nn_layer}
	y^{(\ell)} = \sigma^{(\ell)}\left(W^{(\ell)} y^{(\ell-1)} + b^{(\ell)}\right).
\end{equation}
To simplify notation, we define the layer-wise transformation \( z^{(\ell)} : \mathbb{R}^{n_{\ell-1}} \to \mathbb{R}^{n_\ell} \) by:
\begin{equation}
	z^{(\ell)}(y^{(\ell-1)}) := \sigma^{(\ell)}\left(W^{(\ell)} y^{(\ell-1)} + b^{(\ell)}\right),
\end{equation}
so that equation~\eqref{eq:nn_layer} becomes \( y^{(\ell)} = z^{(\ell)}(y^{(\ell-1)}) \).
The feed-forward fully connected neural network is a composition of \( L \) layers from $\mathbb{R}^{n_\text{in}}$ to $\mathbb{R}^{n_\text{out}}$, where $n_\text{in} = n_0$ and $n_\text{out}=n_L$:
\begin{equation}\label{eq:full_nn}
	y^{(L)} = z^{(L)} \circ z^{(L-1)} \circ \cdots \circ z^{(1)}(y^{(0)}),
\end{equation}
where \( y^{(0)} \in \mathbb{R}^{n_{\text{in}}} \) is the input to the network.
To simplify the notation, we define $\myw = \cup_\ell(W^{(\ell)}, b^{(\ell)}, \myw_\sigma^{(\ell)})$ as all trainable weights of a neural network, and we denote by $|\myw|$ the total number of trainable weights.

%
%
%

\subsection{Dimensionality reduction}\label{sec:reduction}
We consider a $m$-manifold, $\mathcal{M}$, of topological dimension $m$, in the ambient space $\mathbb{R}^N$. We now aim to relate the properties of $\mathcal{M}$ to the properties of the autoencoder. We call $n$ the latent size, $\Psi':\mathbb{R}^N\to\mathbb{R}^n$, $m\leq n<N$, the encoder, and $\Psi:\mathbb{R}^n\to\mathbb{R}^N$ the decoder. The autoencoder is defined as the composition $\Psi \circ \Psi': \mathbb{R}^N\to\mathbb{R}^N$, such that $\Psi \circ \Psi' = \id_{\mathcal{M}}$, where $\id_{\mathcal{M}}$ represents the identity map on $\mathcal{M}$. This requires that $\Psi\circ\Psi'$ is bijective and $\Psi'$ injective on $\mathcal{M}$, i.e. $\Psi':\mathcal{M}\to \mathbb{R}^n$ is injective. 

The maps $\Psi'$ and $\Psi$ will be approximated by neural networks (see \Cref{sec:nn_approx}), denoted by $N_{\Psi'}$ and $N_{\Psi}$, and still referred to as the encoder and decoder, respectively. The distinction between $\Psi'$, $\Psi$ and their neural network approximations $N_{\Psi'}$, $N_{\Psi}$ should be clear from the context.
\rev{
$N_{\Psi'}$ and $N_{\Psi}$ are defined as compositions of affine and nonlinear functions, as detailed in \Cref{sec:neural_network}. We argue that such compositions may compromise the properties that the encoder and the decoder are required to satisfy: the former should be injective, while the latter should properly reproduce the possible nonlinearities of $\mathcal{M}$.
Accordingly, we quantify the extent of the violation of these requirements by means of the following definitions:
\begin{definition}[Encoder inexactness index, $\Varepsilon_e$]
	Let $V = \{v \st v = N_{\Psi'}(x_{N,1}) = N_{\Psi'}(x_{N,2}), \ x_{N,1}, x_{N,2} \in \mathcal{M}, \ x_{N,1} \neq x_{N,2}\}$. Then, $\Varepsilon_e := |V|$, where $|V|$ is the cardinality of $V$.
\end{definition}
\begin{definition}[Decoder inexactness index, $\Varepsilon_d$]
	Let $W = \{w \st w = N_{\Psi}(x_n), \  x_n \in \mathbb{R}^n \}$. Then, $\Varepsilon_d := N-\dim(W)$.
\end{definition}
We focus on the cases where $\Varepsilon_e = 0$ and $\Varepsilon_d = 0$, corresponding to cases in which the encoder and decoder fully satisfy their intended requirements. We note that, according to the definitions above, $\Varepsilon_e$ could become unbounded, however, in this work we are interested in the cases where it is null.\\
For practical convenience, we introduce the following definition to combine the two aforementioned requirements:
}
\begin{definition}[Representation inexactness index, $\Varepsilon_1$]
	\rev{We call the representation inexactness index the following quantity: $\Varepsilon_1 = \Varepsilon_e + \Varepsilon_d$. This quantity represents an error which is intrinsically} introduced by an improper choice of $n_\ell$. This requirement is related to the topological properties of $\mathcal{M}$ and to the reduction/expansion at each step in the composition \eqref{eq:full_nn}.
    We are particularly interested in the latent size $n$ since, throughout the paper, we aim to achieve the maximum compression of the data $x_N$, i.e., to have $n$ as close as possible to $m$.
\end{definition}
\rev{The representation inexactness index is not the only source of error affecting the approximation of the identity, namely $N_{\Psi}\circ N_{\Psi'} \approx \Psi\circ\Psi' = \id_{\mathcal{M}}$. Accordingly, we introduce a second source of error. Let $\mathcal{A}$ be the search set for the trainable weights of the autoencoder, here considered as a single neural network. The layers in $\mathcal{A}$ have size $(n_\ell^{\mathcal{A}})_{\ell = 1}^L$. The training of the autoencoder can be cast as the following optimization problem
\begin{equation*}
    \myw_\text{opt} = \arg \min_{{\tiny W} \in \mathcal{A}} \loss(\myw),
\end{equation*}
where $\loss$ is the loss function.
We denote by $\overline{N}_{\Psi'}$ the network associated with $\myw_\text{opt}$ obtained from the optimization over $\overline{\mathcal{A}}$, which is constructed such that the layers have sizes $(\underbrace{n_\ell^{\mathcal{A}}, \ldots, n_\ell^{\mathcal{A}}}_{M\ \text{times}} )_{\ell=1}^L$ in the limit of $M \to \infty$.

}
\rev{
\begin{definition}[Approximation error, $\Varepsilon_2$]
    This source of error is specifically related to the use of neural networks and their associated training procedure. We define it as $\Varepsilon_2 \coloneq \|\overline{N}_{\Psi}\circ \overline{N}_{\Psi'} - N_{\Psi}\circ N_{\Psi'}\|_{L^\infty(\mathcal{M})}$.
    It arises because $\mathcal{A}$ may not be sufficiently large, for instance due to a finite number of layers and neurons, and because the optimization procedure may not reach the global minimum, so that $\myw_\text{train} \neq \myw_\text{opt}$, where $\myw_\text{train}$ denotes the weights obtained through training.
    
\end{definition}
}
\rev{Note that $\overline{N}_{\Psi}\circ \overline{N}_{\Psi'}$ may not represent the identity on the manifold due to the error associated to a possible $\Varepsilon_1 > 0$. Therefore, $\Varepsilon_2 = 0 \not\Rightarrow N_{\Psi}\circ N_{\Psi'} = \id_{\mathcal{M}}$.}
%
\rev{It follows that it is} relevant to study the effects of the functional composition in \eqref{eq:full_nn}, i.e. to study $\Varepsilon_1$, even before considering the approximation error, $\Varepsilon_2$. Note that this study is independent of the specific activation function used. 
In particular, we aim to have $\Varepsilon_1 = 0$, so that the only source of error in the approximation of $\id_\mathcal{M}$ is due to the practical and unavoidable limitations arising from the use of neural networks, represented by $\Varepsilon_2$.
In the following, after presenting two preliminary lemmas, we analyze three distinct scenarios with respect to types of manifolds and autoencoders. For clarity of exposition, we first make assumptions about the autoencoder that we subsequently use and then, from these assumptions, we then derive the properties of the underlying manifold for which, for the given autoencoder, we have $\Psi\circ\Psi' = \id_{\mathcal{M}}$.


\rev{\begin{proposition}[Image of a linear decoder]\label{sec:lem_dec}
Let $\Psi:\mathbb{R}^n\to\mathbb{R}^N$ be a linear decoder with $n<N$, and let $J_\Psi\in\mathbb{R}^{N\times n}$ denote its Jacobian. Then, $\Psi(\mathbb{R}^n)=\mathrm{col}(J_\Psi)$,
where $\mathrm{col}(J_\Psi)\subset\mathbb{R}^N$ denotes the column space of $J_\Psi$. In particular, for any subset $\mathcal{S}\subset\mathbb{R}^n$, the reconstructed set $\Psi(\mathcal{S})$ is contained in the linear subspace $\mathrm{col}(J_\Psi)$ of $\mathbb{R}^N$.
\end{proposition}}

\rev{Note that, to avoid any ambiguity in the notation, whenever a linear decoder is considered, its application to a vector $x_n$ is expressed in terms of its Jacobian matrix. Similarly for the encoder. In the following, let $V \subset \mathbb{R}^N$ with $\text{dim}(V)=n$ be a linear subspace.}

\rev{
\begin{proposition}[Injectivity of linear encoder]\label{sec:lem_enc}
	A linear encoder can only be injective on manifolds for which there exist both a linear subspace $V$ and a linear transformation $T:\mathcal{M}\to V$ such that $T$ is injective.
	Indeed, by the arbitrariness of the encoder, we can always define an encoder that is injective on a linear subspace of $\mathbb{R}^N$, and in particular $V$. It follows that, given the existence of a linear injection to $V$, the encoder is also injective to $\mathcal{M}$. 
\end{proposition}
}


\subsubsection{Linear encoder - Linear decoder}
We assume here to have a linear encoder and a linear decoder and we analyze, also with a numerical example, the type of manifold that is possible to reconstruct \rev{fulfilling the requirement $\Varepsilon_1 = 0$.} 

%
   

\rev{
\begin{proposition}[Exactness of autoencoders with a linear encoder and/or a linear decoder]\label{sec:lem_enc_dec}
An autoencoder with a linear encoder and/or a linear decoder can reproduce with $\Varepsilon_1 = 0$ only linear manifolds. 
\end{proposition}
}

Proposition \ref{sec:lem_enc_dec} is a direct consequence of Proposition \ref{sec:lem_dec} and Proposition \ref{sec:lem_enc}. In fact, the limiting condition is that the decoder is only capable of reproducing linear manifolds, $\mathcal{M}\in V$, while the encoder can be applied to the linear $\mathcal{M}$ preserving the injectivity.

Note that the previous propositions also hold for affine \rev{encoders}, but, for the sake of simplicity, we treat only the linear case.
\rev{We present now different cases} to provide constructive insight into the conditions under which an autoencoder can represent a manifold without loss, that is, when  $\Varepsilon_1$ vanishes. By analyzing specific configurations of the encoder and decoder, we clarify the limit of the expressivity of various architectural choices. 
See Fig.~\ref{fig:autoenc_case_0} for an example. 
In this section, we intentionally show simple examples such as Fig.~\ref{fig:autoenc_case_0}-\ref{fig:autoenc_case_2_dirty}, to enable a graphical interpretation of the topic. 
\begin{figure}[h!]
	\centering
	\includegraphics[width=0.4\linewidth]{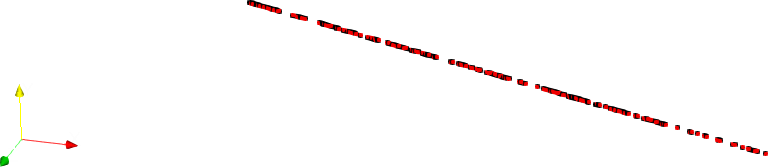}
	\caption{Example of a 1-manifold in $\mathbb{R}^3$ that can be reproduced exactly with linear transformations. Black point (overlapped by red points): original manifold. Red points: output of autoencoder. Here, $n=1$.}
	\label{fig:autoenc_case_0}
\end{figure}
%

\subsubsection{Linear encoder - Nonlinear decoder}
Here, we consider a linear encoder and a nonlinear decoder, with the same objective as before: having $\Varepsilon_1=0$. 
\rev{
Let $f : \mathbb{R}^n \to \mathcal{M}$ be the map that describes the manifold. Given a linear space $V \subset \mathcal{M}$, we denote by $f_V$ and $f_{V^\perp}$ the decomposition of $f$ into its components in $V$ and in its orthogonal complement, respectively.
}
\rev{
\begin{proposition}[Exactness of autoencoders with linear encoder and nonlinear decoder]\label{th:linear_nonlinear} 
    A linear--nonlinear autoencoder can reproduce a manifold $\mathcal{M}$ with $\Varepsilon_1 = 0$ if there exist a linear space $V$ and a map $f_V$ such that the manifold is described by $f = f_V + f_{V^\perp}$ with $f_V$ injective.
\end{proposition}

This is a direct consequence of \Cref{sec:lem_enc}: the linear encoder performs an injective projection onto a linear subspace, while the nonlinearity of the decoder is a necessary condition to reconstruct the lost orthogonal contribution, $f_{V^\perp}$.
}

In the following examples, we choose $f_V$ to be a linear map and thus consider manifolds of the form $x_N = Ax_n + g(x_n)$, with $A \in \mathbb{R}^{N\times n}$ full rank and $g(x_n) \perp Ax_n$. Despite its apparent simplicity, the latter expression can describe non-trivial geometries: see Fig.~\ref{fig:autoenc_case_1}
 
Examples are represented by the surface of a function or a parametric curve, Fig.~\ref{fig:autoenc_case_1}. In the latter, the line is a contracting coil described by $(x,y,z) = (r\cos(\theta),r\sin(\theta), 0.2\theta)$, where $r=\frac{2\pi}{2\pi+\theta}$, for which a possible representation is $A=(0,0,\rev{0.2})^T$ and $g=(r\cos(\theta),r\sin(\theta),0)^T$. For practical simplicity, in these numerical examples, we set $\Psi' = W^{(1)}$ (see \eqref{eq:nn_layer}) and $\Psi$ is approximated by neural networks with a finite and relatively small number of layers, trained using the common gradient-based method (as will also see in \Cref{sec:train}) resulting in nonzero, but small, values of $\Varepsilon_2$. Consequently, in Fig.~\ref{fig:autoenc_case_1}, the data points and the autoencoder output do not exactly coincide.
\begin{figure}[h!]
	\centering
	\subfloat[Graph of function. $m=2$, $n=2$.]{\includegraphics[width=0.6\linewidth]{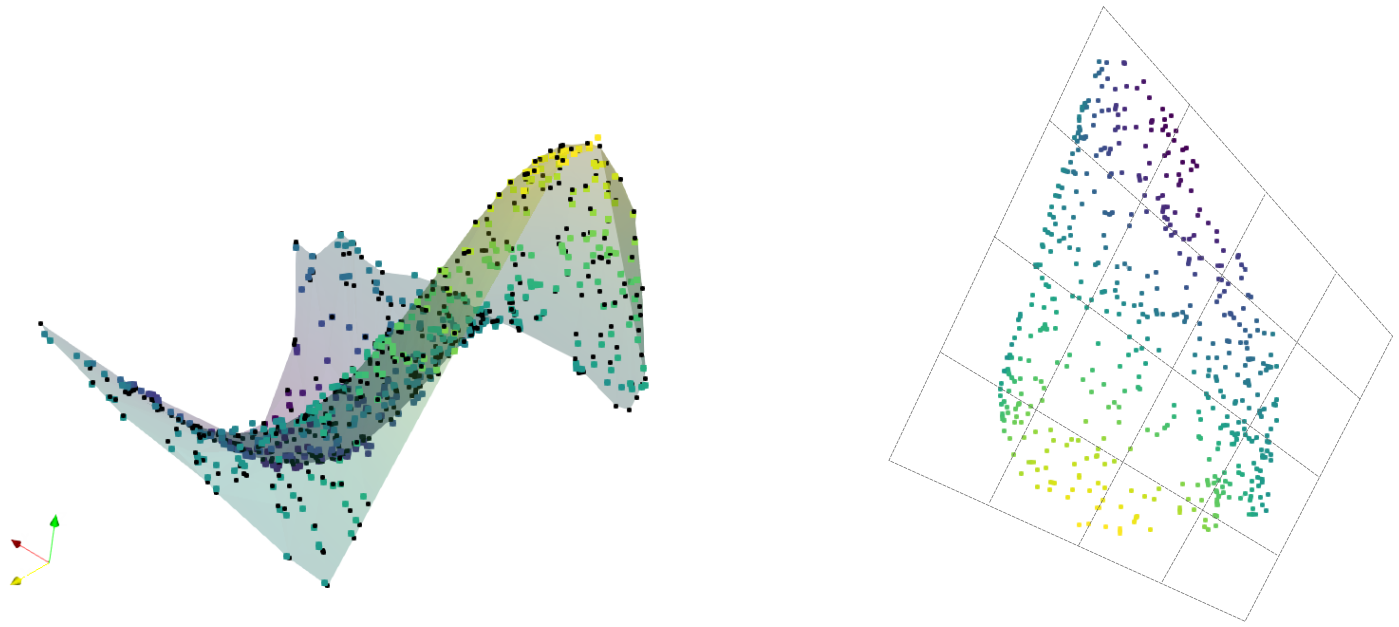}}\hfill
	\subfloat[Coil. $m=1$, $n=1$.]{\includegraphics[width=0.2\linewidth]{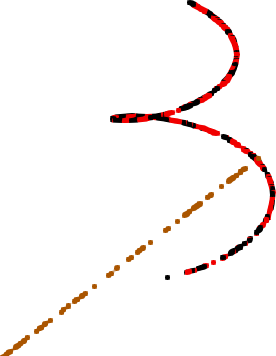}}
	\caption{\textbf{(a)} Manifold defined as graph of a function.  
    Left panel, black dots: encoder inputs; colored dots: decoder output. The shaded surface is reconstructed with a Delaunay triangulation. The color scale is related to the z-coordinate and it is used just to ease the graphical interpretation.
	Right panel: flat surface obtained from $\phi\circ\Psi'$.
	\textbf{(b)}
    Coil geometry. Black dots: encoder input. Red dots: decoder output. Ochre line: output of $\phi\circ\Psi'$.}
	\label{fig:autoenc_case_1}
\end{figure}

Incidentally, by constructing the decoder as $\Psi =  \psi\circ\phi$, $\phi : \mathbb{R}^n \to \mathbb{R}^N$ a linear function, $\psi:\mathbb{R}^N\to\mathbb{R}^N$ a nonlinear function, we can flatten the curved manifold, obtaining $\mathcal{M}_{\text{flat}} = \phi(\Psi'(\mathcal{M}))$, Fig.~\ref{fig:autoenc_case_1}.

\subsubsection{Nonlinear encoder - Nonlinear decoder}\label{sec:linear-nonlinear}
We consider here a nonlinear encoder combined with a nonlinear decoder. The nonlinearities in the encoder allow us to represent more complex manifolds. As already mentioned, in our approach, the encoder will be implemented as a neural network that combines activation functions with affine transformations. 

By the universal approximation theorem, see, e.g. \cite{Li2023, Guehring2021, Lu2021a, Hanin2017}, we see that, under appropriate continuity hypotheses, fully-connected neural networks are capable of approximating functions from $\mathbb{R}^N$ to $\mathbb{R}^n$, such as $\Psi'$.

For the reader's convenience, we summarize here the main result from \cite{Hanin2017} (Theorem 1), which serves both as a proof of the universal approximation property and as an upper bound on the width size:
\begin{theorem}[Upper bound for width \cite{Hanin2017}]\label{th:uap}
	\textit{Let $w_{\min}(n_{\text{in}}, n_{\text{out}})$
	be the minimal value of the width $w$ such that for every continuous function 
	$f : [0, 1]^{n_{\text{in}}} \to \mathbb{R}^{n_{\text{out}}}$
	and every $\varepsilon > 0$ there is a ReLU neural network $N_f$ with input dimension $n_{\text{in}}$, hidden layer widths at most $w$, and output dimension $n_{\text{out}}$ that $\varepsilon$-approximates $f$:
	\begin{equation*}
		\sup_{x \in [0,1]^{n_{\text{in}}}} \| f(x) - N_f(x) \| \leq \varepsilon.
	\end{equation*}
	Then, for every $n_{\text{in}}, n_{\text{out}} \geq 1$, we have $n_\text{in}+1\leq w_{\min}(n_\text{in},n_\text{out})\leq n_{\text{in}} + n_{\text{out}}$, proving that
	$w_{\min}(n_{\text{in}}, n_{\text{out}}) \leq n_{\text{in}} + n_{\text{out}}$.}
\end{theorem}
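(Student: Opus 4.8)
The plan is to establish the two-sided bound $n_{\text{in}}+1 \le w_{\min}(n_{\text{in}},n_{\text{out}}) \le n_{\text{in}}+n_{\text{out}}$ by treating the two inequalities separately, with the upper bound being the substantive constructive part and the lower bound following from a topological obstruction. Since the theorem is borrowed from \cite{Hanin2017}, I would reconstruct the argument at the level of its main ideas rather than reproduce every estimate.

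For the upper bound, I would first reduce from continuous to piecewise-linear targets. Because $f$ is continuous on the compact cube $[0,1]^{n_{\text{in}}}$ it is uniformly continuous, so for any $\varepsilon>0$ there is a continuous piecewise-linear $g$ obtained by simplicial interpolation on a sufficiently fine triangulation of the cube with $\sup_{x}\|f(x)-g(x)\|\le \varepsilon/2$. It then suffices to represent $g$ to accuracy $\varepsilon/2$ by a ReLU network of width $n_{\text{in}}+n_{\text{out}}$. The construction I would use is a \emph{register} architecture in which every hidden layer carries $n_{\text{in}}$ coordinates reproducing an affine image of the input $x$ together with $n_{\text{out}}$ coordinates that accumulate a running partial sum of the output. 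The key points are: (i) since $x\in[0,1]^{n_{\text{in}}}$ is nonnegative, the ReLU acts as the identity on the input registers, so $x$ is transported losslessly through arbitrarily many layers; (ii) each additional ReLU unit, composed with the carried copy of $x$, realizes one more linear piece of $g$, whose contribution is injected additively into the output registers; (iii) to keep the accumulators nonnegative so that they too survive the pass-through, I would add a large constant offset and subtract it in the final affine read-out. Crucially, each piece of $g$ costs only extra depth and a bounded number of auxiliary operations, never extra width, so the total width stays at $n_{\text{in}}+n_{\text{out}}$ regardless of how many pieces $g$ has; combining with the interpolation error gives the claimed $\varepsilon$-approximation of $f$.

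For the lower bound $w_{\min}\ge n_{\text{in}}+1$, I would argue that width-$n_{\text{in}}$ networks are too rigid to approximate a function with a strict interior extremum. Any ReLU network whose hidden widths are all $\le n_{\text{in}}$ factors through maps $\mathbb{R}^{n_{\text{in}}}\to\mathbb{R}^{n_\ell}$ with $n_\ell\le n_{\text{in}}$, and for such a network one can show that every superlevel set $\{x : N_f(x) > c\}$ is either empty or has no bounded connected component: compositions of ReLUs with affine maps that do not increase dimension cannot ``close off'' a compact region. Choosing as target a continuous function with a compactly supported bump, whose superlevel sets \emph{do} contain a bounded component, then yields a contradiction for small enough $\varepsilon$, forcing the width to be at least $n_{\text{in}}+1$.

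The main obstacle is the upper-bound construction: making the register/accumulation scheme rigorous requires verifying that the offset keeping the output accumulators nonnegative can be chosen uniformly over the cube and removed exactly at the read-out, and that each piecewise-linear piece is injected without corrupting the transported copy of the input. The lower bound, though conceptually clean, also demands care in formalizing the ``no bounded component'' property of width-$n_{\text{in}}$ networks, which is where the essential topological input enters.
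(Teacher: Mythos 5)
The paper itself contains no proof of this statement: Theorem~\ref{th:uap} is imported verbatim from \cite{Hanin2017}, so the only meaningful comparison is against that source. Measured that way, your reconstruction follows the Hanin--Sellke argument faithfully in structure: uniform continuity plus simplicial interpolation to reduce to a piecewise-linear target, a register-style network of width $n_{\text{in}}+n_{\text{out}}$ that transports a copy of the (nonnegative) input through every layer while spending only depth on the pieces, constant offsets to keep carried channels in the region where ReLU acts as the identity, and, for the lower bound, the topological obstruction that width-$n_{\text{in}}$ networks produce functions whose superlevel sets have no bounded connected components, contradicting approximation of a compactly supported bump. Both halves are the right ideas, and your closing list of obstacles correctly identifies where the technical work lies.

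One step of your sketch is wrong as literally stated, though repairable within your own architecture: in point (ii) you inject each linear piece of $g$ \emph{additively} into the output registers. For $n_{\text{in}}\geq 2$ this cannot work, because a finite sum of terms of the form $\mathrm{ReLU}(a\cdot x+b)$ has its nonlinearity locus contained in a union of hyperplanes, whereas a generic continuous piecewise-linear $g$ breaks along a polyhedral complex that is not such a union; purely additive accumulation therefore cannot represent (or exactly reproduce) general PL targets in dimension two or higher. The source instead writes each output coordinate of $g$ as a max--min string of affine functions and updates the accumulator channel by \emph{lattice} operations rather than sums: an update such as $o \mapsto \max(o, h(x)) = h(x) + \mathrm{ReLU}(o - h(x))$ is realized over two consecutive layers using the carried copy of $x$ to re-add $h(x)$, with offsets ensuring all stored values stay nonnegative. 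This fits exactly into your register scheme and costs only depth, so the width count $n_{\text{in}}+n_{\text{out}}$ survives; but the accumulation must be max/min-based, not additive, for the upper bound to go through.
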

This theorem leads to the following proposition about fully nonlinear autoencoders.
\begin{proposition}
	A nonlinear--nonlinear autoencoder can reproduce with $\Varepsilon_1 = 0$ generic manifolds\textbf{.}
\end{proposition}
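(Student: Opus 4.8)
The plan is to construct, for an arbitrary compact $m$-manifold $\mathcal{M} \subset \mathbb{R}^N$, a pair of \emph{exact} continuous maps $\Psi'$ and $\Psi$ satisfying $\Psi \circ \Psi' = \id_{\mathcal{M}}$, and then to invoke \Cref{th:uap} to conclude that these maps can be reproduced by fully-connected networks with null representation error, any residual being absorbed into $\Varepsilon_2$. The point of the argument is that, once a continuous injective encoder exists, the non-linearity of both maps removes every topological obstruction that constrained the linear and linear--non-linear cases.

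First I would obtain a topological embedding $e : \mathcal{M} \to \mathbb{R}^n$. For a smooth manifold this follows from the Whitney embedding theorem with $n \geq 2m+1$, consistent with the latent-dimension bound of \Cref{th:latent_dim}; if $\mathcal{M}$ is only a topological manifold one invokes instead the Menger--N\"obeling embedding theorem, which likewise yields a continuous injective $e$ into $\mathbb{R}^{2m+1}$. Since $\mathcal{M}$ is compact (it is the continuous image of a compact set, cf.\ \Cref{th:manifold}), $e$ is a homeomorphism onto its image, so $e^{-1} : e(\mathcal{M}) \to \mathcal{M}$ is continuous, and both $\mathcal{M}$ and $e(\mathcal{M})$ are closed in their respective ambient spaces.

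Next I would extend both maps to the full ambient spaces by the Tietze extension theorem, applied componentwise: $e$ extends to a continuous encoder $\Psi' : \mathbb{R}^N \to \mathbb{R}^n$ with $\Psi'|_{\mathcal{M}} = e$, and $e^{-1}$ extends to a continuous decoder $\Psi : \mathbb{R}^n \to \mathbb{R}^N$ with $\Psi|_{e(\mathcal{M})} = e^{-1}$. Verifying $\Varepsilon_1 = 0$ is then immediate: for $x \in \mathcal{M}$ we have $\Psi'(x) = e(x) \in e(\mathcal{M})$, hence $\Psi(\Psi'(x)) = e^{-1}(e(x)) = x$, so $\Psi \circ \Psi' = \id_{\mathcal{M}}$ exactly. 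As $\Psi'$ and $\Psi$ are continuous maps between Euclidean spaces, \Cref{th:uap} guarantees ReLU approximants of bounded width, and the remaining discrepancy is by definition the approximation error $\Varepsilon_2$, not $\Varepsilon_1$.

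The hard part will be guaranteeing that the contraction to the latent dimension is deferred to the last layer of the encoder, as required by \Cref{th:reduction}. Although $e$ is injective on $\mathcal{M}$, a network that narrows to fewer than $n$ neurons in an intermediate layer can irreversibly identify distinct points of $\mathcal{M}$, reinstating a nonzero $\Varepsilon_1$ no matter how wide the subsequent layers are. The construction must therefore keep every hidden layer of $N_{\Psi'}$ at width at least $n$ (indeed at least the bound $n_{\text{in}} + n_{\text{out}}$ of \Cref{th:uap}), performing the reduction to the latent size $n$ only once, at the output.
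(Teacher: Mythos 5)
Your proof is correct, and its skeleton (a low-dimensional embedding plus the universal approximation theorem of \Cref{th:uap}) is the same one the paper relies on; the difference is that the paper states the proposition as an immediate consequence of \Cref{th:uap} and leaves the existence of the ideal maps implicit, deferring the embedding argument to \Cref{th:latent_dim}, whereas you construct the exact pair $(\Psi', \Psi)$ explicitly. Your additions are genuinely useful: the compactness argument guaranteeing that $e^{-1}$ is continuous, and above all the Tietze extension step, which is nowhere invoked in the paper but is precisely what is needed for \Cref{th:uap} to apply --- that theorem approximates continuous functions defined on ambient cubes, so one must first exhibit continuous maps $\Psi' : \mathbb{R}^N \to \mathbb{R}^n$ and $\Psi : \mathbb{R}^n \to \mathbb{R}^N$ extending $e$ and $e^{-1}$ off the manifold, not merely maps defined on $\mathcal{M}$ and $e(\mathcal{M})$. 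In short, the paper's version buys brevity by treating the proposition as a corollary of universal approximation, while yours closes the logical gap between ``a continuous injective encoder exists on $\mathcal{M}$'' and ``a neural network can realize it,'' and your closing caution about not shrinking intermediate widths is exactly the content of \Cref{th:reduction}, so the two accounts are mutually consistent.
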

However, dimensionality reduction is performed layer by layer via a sequence of affine maps, which leads to the following theorem:
\begin{theorem}[Sufficient condition for $\Varepsilon_1=0$]\label{th:reduction} \textit{We assume $\mathcal{M}$ to be a compact $m$-manifold, \rev{and $n$ and $L$ to be sufficiently large, with $m \leq n <N $}. A sufficient condition to ensure \rev{the existence of an injective encoder} is that \rev{the width of the encoder layers} satisfies \rev{$n_\text{in} \leq n_\ell \leq n_\text{in} + n_\text{out}$}, for $\ell = 1,\ldots, L-1$.}  
\end{theorem} 
\begin{proof}
	To ensure the \rev{existence of an injective} encoder approximated by a neural network, \rev{we need to ensure the existence of injective layers}, condition which \rev{can only be satisfied in general} when $n_\ell \geq n_{\ell-1}$ for all hidden layers, so $n_\ell \geq n_\text{in}$ for $\ell = 1,\ldots,L-1$. By \Cref{th:uap}, we have that $n_\ell = n_\text{in}+n_\text{out}$ is sufficient to satisfy $\Varepsilon_1 = 0$. 
\end{proof}
\rev{Additionally, under the above conditions, in particular $n_\text{in} \leq n_\ell \leq n_\text{in} + n_\text{out}$, for $\ell = 1,\ldots, L-1$, and assuming ideal training procedure, we have that $\lim_{|\myw|\to\infty}\|\Psi' - N_{\Psi'}\| = 0$. This result also provides practical insights on the construction of the encoder.} \\
Therefore, the encoder can be written as: $\Psi' = \phi'\circ\psi'$, where $\phi' : \mathbb{R}^\rev{M}\to\mathbb{R}^n$ is a linear function, and $\psi':\mathbb{R}^N\to\mathbb{R}^M$, $M \geq N$, a nonlinear function.
As a result, the role of $\psi'$ is that of reshaping the manifold to ensure that the linear reduction in the last layer is injective. 
Similar conclusions can be drawn for the decoder: \rev{due to representation error and accuracy considerations, the expansion should not be performed solely in the last layer, as this approach would limit the network to representing an affine space to $\mathbb{R}^{n_{L-1}}$.}\\
\begin{remark}
	The message of \Cref{th:reduction} is that, without knowledge of the specific problem at hand, it is generally inappropriate to design a fully connected encoder with layers whose sizes reduce progressively (similarly for the decoder). Indeed, a reduction in dimension at an intermediate layer may compromise the injectivity of the encoder. 
\end{remark}
In the following examples and in the numerical cases in \Cref{sec:numerical_studies}, we use either PReLU or ELU activation functions. Empirically, we notice that $M=N$ is sufficient to have $\Varepsilon_1 = 0$ for the considered cases. \\
Fig.~\ref{fig:autoenc_case_2_dirty} shows a perturbed coil with an additional oscillation along its length. It is an example of a manifold for which a nonlinear encoder is necessary. In the same figure, the output of $\psi'$ is represented in green.  We see that the coil has been unwound, and can now be projected injectively onto a straight line, since in this case $n=1$. 
\begin{figure}[h!]
	\centering
	\includegraphics[width=0.5\linewidth]{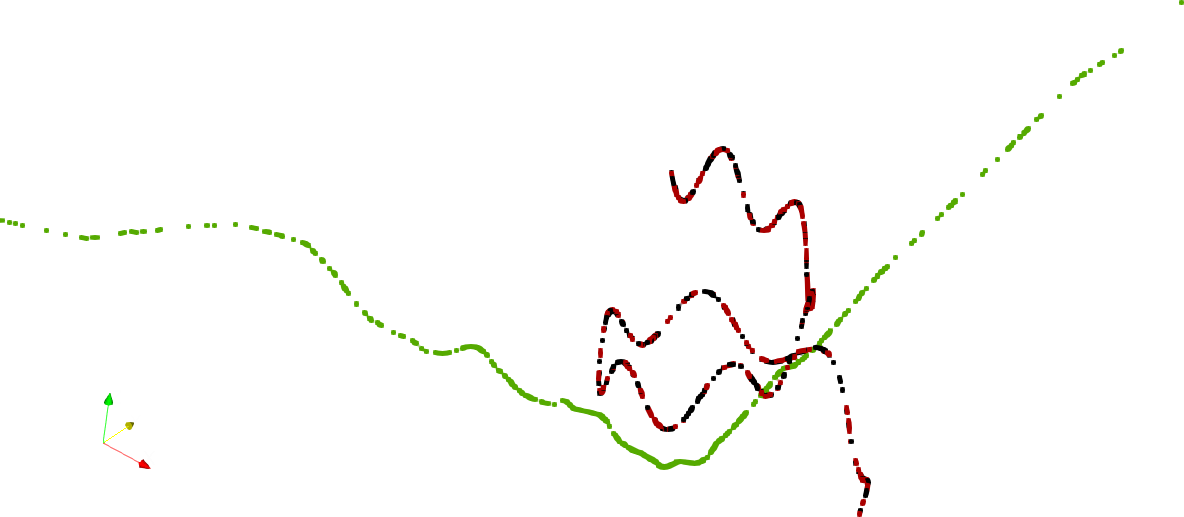}
	\caption{Noisy coil. A fully nonlinear autoencoder in necessary to reproduce $\id_{\mathcal{M}}$ with $\Varepsilon_1 = 0$. Black point: original manifold, red points: its reconstruction with $\Psi\circ\Psi'$, green points: output of $\psi'$. Here, $m=1$ and $n=1$.}
	\label{fig:autoenc_case_2_dirty}
\end{figure}
%

\begin{remark}Dimensionality reduction and similarity with POD-DL-ROM \cite{Fresca2022}:
	the reduction procedure presented in \cite{Fresca2022} relies on a first linear reduction based on the singular value decomposition of the snapshot matrix, followed by a non linear reduction made with an encoder. Similarly for the reconstruction step. The overall compression map can be interpreted as a single $N_{\Psi'}$, where $\sigma^{(1)} = \id$, the identity map, $b^{(1)} = 0$ and $W^{(1)}$, a rectangular matrix, computed from the singular value decomposition. For practical simplicity, in our method we just use neural networks in the conventional sense where $W^{(1)}$ (as all the other weights) is computed by an optimization procedure as it will be explained in \Cref{sec:train}.
\end{remark}

\section{ODE dimensionality reduction}\label{sec:ode_red}

In the following, our aim is to characterize the set of solutions of the \Node, and the possibility of describing it with a limited number of coordinates. 
We define $U$ as the subset of the solution set in a neighborhood of given $\mu^*$ and $t^*$: $U = \{u_N(t;\mu)\}_{B_r((t^*,\mu^*))}$, where $B_r(\cdot)$ is the closed ball of radius $r$.
\begin{theorem}[$a$-manifold]\label{th:manifold}
\textit{We assume that $F_N$ and $u_{N0}$ depend continuously on $\mu$ and \eqref{eq:Node_mu} is well-posed for each $\mu \in \mathcal{P}$, and the map $B_r((t^*,\mu^*)) \ni (t, \mu) \to u_N\rev{(t;\mu)}$ is injective $\forall (t^*, \mu^*) \in [0,T]\times\mathcal{P}$, with $r$ small enough. Then, the set $\{u_N(t;\mu)\}_{t\in[0,T], \mu \in \mathcal{P}}$ is \rev{an} $a$-manifold.}
\end{theorem}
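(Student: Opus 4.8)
The plan is to realize the solution set as the image of a single continuous map and then transfer the local structure of the domain to it. Define $\Phi : [0,T]\times\mathcal{P} \to \mathbb{R}^N$ by $\Phi(t,\mu) = u_N(t;\mu)$, so that the set in the statement is exactly $\mathcal{S} := \Phi([0,T]\times\mathcal{P})$. Since $[0,T]$ contributes one coordinate and $\mathcal{P}\subset\mathbb{R}^{a-1}$ contributes $a-1$, the domain $D := [0,T]\times\mathcal{P}$ has topological dimension $a$, which is the value we want to transfer to $\mathcal{S}$.

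First I would establish that $\Phi$ is \emph{jointly} continuous on $D$. Well-posedness of \eqref{eq:Node_mu} for each $\mu$ provides existence, uniqueness and continuous dependence of $u_N(\cdot;\mu)$ on the data; combined with the assumed continuity of $F_N$ and $u_{N0}$ in $\mu$, the standard continuous-dependence-on-parameters theorem for ODEs yields that $\mu \mapsto u_N(\cdot;\mu)$ is continuous into $C([0,T];\mathbb{R}^N)$, i.e. uniformly in $t$. Continuity in $t$ is immediate since each $u_N(\cdot;\mu)$ solves the ODE and is therefore $C^1$. Uniform-in-$t$ continuity in $\mu$ together with continuity in $t$ gives joint continuity of $\Phi$, so $\mathcal{S}$ is the continuous image of a compact set; in particular it is compact, Hausdorff and second countable as a subspace of $\mathbb{R}^N$.

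Next I would build local charts. Fix $(t^*,\mu^*)$ and let $r$ be small enough that $\Phi$ is injective on $K := B_r((t^*,\mu^*))\cap D$, which is precisely the local-injectivity hypothesis. The set $K$ is closed and bounded, hence compact, and $\Phi|_K$ is continuous and injective into the Hausdorff space $\mathbb{R}^N$; therefore $\Phi|_K$ is a homeomorphism onto its image $\Phi(K)$. At an interior point $(t^*,\mu^*)\in(0,T)\times\operatorname{int}(\mathcal{P})$ the relatively open ball is homeomorphic to an open subset of $\mathbb{R}^a$, so composing with $(\Phi|_K)^{-1}$ produces a chart modeled on $\mathbb{R}^a$; boundary points (the endpoints $t\in\{0,T\}$ and $\partial\mathcal{P}$) are treated identically with half-space charts, exhibiting $\mathcal{S}$ as an $a$-manifold of topological dimension $a$.

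The hard part will be upgrading these local homeomorphisms into genuine charts for the subspace topology of the whole set $\mathcal{S}$. The compactness argument only shows that $\Phi|_K$ is a homeomorphism onto $\Phi(K)$, so the image of a small open ball is open in $\Phi(K)$ but not a priori open in $\mathcal{S}$; this openness fails exactly when $\mathcal{S}$ has a global self-intersection, i.e. when $\Phi(t',\mu') = \Phi(t^*,\mu^*)$ for some $(t',\mu')$ far from $(t^*,\mu^*)$, which local injectivity does not by itself forbid. Near such a crossing $\mathcal{S}$ is a union of two $a$-dimensional sheets and is not locally Euclidean. I would therefore either strengthen the hypothesis to global injectivity of $\Phi$ — in which case continuity plus compactness of $D$ makes $\Phi$ a homeomorphism onto $\mathcal{S}$ and the conclusion is immediate — or, keeping only local injectivity, conclude that $\mathcal{S}$ is locally (immersed as) an $a$-manifold. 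This is the single step where the precise strength of the injectivity assumption is decisive, and I expect it to be the crux of the argument.
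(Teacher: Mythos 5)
Your construction coincides with the paper's own proof up to your final paragraph: joint continuity of $(t,\mu)\mapsto u_N$, compactness of the closed ball, and the fact that a continuous injection from a compact space into a Hausdorff space is a homeomorphism onto its image, yielding local charts. Up to that point the two arguments are the same, with yours spelling out the continuous-dependence step that the paper leaves implicit.

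The ``hard part'' you isolate is not a defect of your write-up; it is exactly the step the paper's proof passes over. The paper shows that $U=\{u_N(t;\mu)\}_{B_r((t^*,\mu^*))}$ is homeomorphic to the ball $B_r((t^*,\mu^*))$ and concludes that ``$U$ is locally Euclidean, so the solution set is an $a$-manifold.'' But local Euclidean-ness must hold in the subspace topology of the whole set $\mathcal{M}$, and a neighborhood of $u_N(t^*;\mu^*)$ in $\mathcal{M}$ may contain points $u_N(t';\mu')$ with $(t',\mu')$ far from $(t^*,\mu^*)$; local injectivity does not exclude such crossings. Your implicit counterexample is the right one: a lemniscate is the image of a locally injective continuous map on an interval, yet it is not a $1$-manifold at the self-intersection, where the set looks like two crossing sheets. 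So with the hypotheses as written the conclusion does not follow; one must either assume global injectivity of $(t,\mu)\mapsto u_N$ --- in which case your compactness argument immediately makes the solution map a homeomorphism onto $\mathcal{M}$ and the theorem is proved --- or weaken the conclusion, exactly the two remedies you propose. The paper itself effectively concedes the point in \Cref{remark:augmentation}, where the solution set is augmented to $\{(u_N(t;\mu),t,\mu)\}$ precisely to restore global injectivity. In short: your proposal is correct as far as the stated hypotheses allow, and the obstruction you identify is a genuine gap in the paper's own proof rather than in yours.
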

\begin{proof}
    First, we note that the set $\{u_N(t;\mu)\}_{t\in[0,T], \mu \in \mathcal{P}}$ is a subset of a second countable Hausdorff space. 
    Then, by the \rev{well-posedness of \eqref{eq:Node_mu} and by} the continuity of $F_N$ and $u_{N0}$ in $\mu$, we see that $(t, \mu) \to u_N\rev{(t;\mu)}$ is continuous. By the continuity and by the compactness of $\mathcal{P}$, we have that $U$ is compact and $B_r((t,\mu))\to U$ has a continuous inverse, thus $B_r((t,\mu))\to U$ is a local homeomorphism. We conclude that $U$ is locally Euclidean, so $\{u_N(t;\mu)\}_{t\in[0,T], \mu \in \mathcal{P}}$ is \rev{an} $a$-manifold, that we denote by $\mathcal{M}$. 
\end{proof}
It follows that $\mathcal{M}$ can be described by a small number of coordinates, $n<N$, as presented in \Cref{sec:reduction}. With the following theorem, we aim to characterize the minimum value of $n$.
\begin{theorem}[Latent dimension]\label{th:latent_dim} 
\textit{Under the hypothesis of \Cref{th:manifold}, the minimum latent dimension is $n \leq 2a+1$.}
\end{theorem}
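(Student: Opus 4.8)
The plan is to read this statement as a manifold-embedding result. By \Cref{th:manifold}, the solution set $\mathcal{M}=\{u_N(t;\mu)\}_{t\in[0,T],\,\mu\in\mathcal{P}}$ is a compact $a$-manifold. Recall from \Cref{sec:reduction} that an admissible latent dimension $n$ is one for which there exists an encoder $\Psi'\colon\mathbb{R}^N\to\mathbb{R}^n$ injective on $\mathcal{M}$ together with a decoder $\Psi$ realizing $\Psi\circ\Psi'=\id_{\mathcal{M}}$; the assertion $n\le 2a+1$ therefore amounts to showing that $2a+1$ latent coordinates always suffice. First I would reduce this to a purely topological question: since $\mathcal{M}$ is compact and $\mathbb{R}^n$ is Hausdorff, any continuous injection $\mathcal{M}\to\mathbb{R}^n$ is automatically a homeomorphism onto its image, so the existence of a valid encoder--decoder pair with latent size $n$ is equivalent to the existence of a topological embedding of $\mathcal{M}$ into $\mathbb{R}^n$, with the encoder playing the role of the embedding and the decoder that of its inverse on the image.

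Second, I would invoke a classical embedding theorem to produce such an embedding in dimension $2a+1$. Because $\mathcal{M}$ is a compact $a$-manifold, it is metrizable (it sits inside $\mathbb{R}^N$) and its topological (covering) dimension equals $a$; the Menger--N\"obeling embedding theorem then guarantees that every separable metric space of covering dimension $a$ embeds in $\mathbb{R}^{2a+1}$. Alternatively, should the flow of \eqref{eq:Node_mu} be regular enough in $(t,\mu)$ to endow $\mathcal{M}$ with a smooth structure, the weak (``easy'') Whitney embedding theorem yields the same bound $2a+1$ directly. Either route produces an embedding of $\mathcal{M}$ into $\mathbb{R}^{2a+1}$, hence an admissible encoder--decoder pair with $n=2a+1$, which is exactly $n\le 2a+1$.

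The main obstacle is matching the regularity category. \Cref{th:manifold} establishes only that $\mathcal{M}$ is a \emph{topological} manifold, so the clean smooth Whitney statement does not apply verbatim; the safe route is the dimension-theoretic Menger--N\"obeling result, which requires identifying the covering dimension of $\mathcal{M}$ with $a$ (immediate for an $a$-manifold) and noting metrizability (immediate). A secondary point I would want to tie down is that this abstract embedding can be realized by the architecture of \Cref{th:reduction} --- that is, that the final linear reduction onto $\mathbb{R}^{2a+1}$ can be made injective on $\mathcal{M}$ --- so that the existence statement translates into a concrete encoder with $\Varepsilon_1=0$ rather than remaining purely abstract.
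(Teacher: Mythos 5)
Your proposal is correct and takes essentially the same route as the paper: the paper's proof likewise treats $\mathcal{M}$ as a compact topological space of dimension $a$ and invokes the Menger--N\"obeling embedding theorem to obtain a continuous injective map $\Psi'\colon\mathcal{M}\to\mathbb{R}^{2a+1}$. The extra details you supply --- the compact-to-Hausdorff argument ensuring the inverse (decoder) is continuous, and the correct dismissal of the Whitney route since $\mathcal{M}$ is only a topological manifold --- are sound elaborations of steps the paper leaves implicit.
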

\begin{proof}
    According to Menger-N\"obeling embedding theorem (see, e.g., \cite{munkres2000topology}), any compact topological space of topological dimension $a$ admits a topological embedding in $\mathbb{R}^{2a+1}$. Hence, there exists a continuous injective map $\Psi' : \mathcal{M} \to \mathbb{R}^{2a+1}$.   
\end{proof}
This result provides an upper bound on the minimal latent dimension required to represent the manifold $\mathcal{M}$ without loss of information, and justifies the use of autoencoder architectures with a reduced dimension, $n$, not greater than $2a+1$ for the exact encoding of $\mathcal{M}$. \rev{We note that similar results, obtained under a different setting, can be found in \cite{Franco2022}.}

\begin{obs}{}
Throughout the paper, our aim is to describe the map $\mathcal{M} \to \mathbb{R}^n$ using a \textit{single} function $\Psi'$, and similarly for the inverse mapping $\mathbb{R}^n \to \mathcal{M}$. Although an atlas-based approach is possible, see \cite{Floryan2022}, the use of single global maps offers the advantage of requiring the approximation of only one encoder and one decoder. This comes with the negligible drawback of a potentially larger reduced dimension, at most equal to $2a + 1$.
\end{obs}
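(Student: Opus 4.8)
The plan is to prove sufficiency of $n = 2a+1$ by invoking a classical embedding theorem from dimension theory rather than constructing the map explicitly. Since \Cref{th:manifold} already guarantees that $\mathcal{M} = \{u_N(t;\mu)\}_{t\in[0,T],\,\mu\in\mathcal{P}}$ is a topological $a$-manifold, the task reduces to exhibiting a single continuous injection $\Psi':\mathcal{M}\to\mathbb{R}^{2a+1}$; its existence is exactly what the Menger--N\"obeling embedding theorem provides for compact metrizable spaces of covering dimension at most $a$. The bound $n \le 2a+1$ is then read off directly.

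First I would verify the three hypotheses of that theorem for $\mathcal{M}$. Metrizability is immediate, since $\mathcal{M}$ carries the subspace topology inherited from $\mathbb{R}^N$. Compactness follows because $\mathcal{M}$ is the continuous image of the compact set $[0,T]\times\mathcal{P}$ under the map $(t,\mu)\mapsto u_N(t;\mu)$, whose continuity was established in the proof of \Cref{th:manifold}, and $\mathcal{P}$ is compact by assumption. Finally, the Lebesgue covering dimension of $\mathcal{M}$ equals $a$, because for a separable metrizable topological $a$-manifold the covering dimension coincides with the manifold dimension.

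With the hypotheses in place, the embedding theorem yields a topological embedding $\Psi':\mathcal{M}\to\mathbb{R}^{2a+1}$, that is, a continuous injection that is a homeomorphism onto its image. Taking $\Psi'$ as the encoder, its set-theoretic inverse on $\Psi'(\mathcal{M})$ is automatically continuous --- a continuous bijection from a compact space to a Hausdorff space is a homeomorphism --- so it furnishes a decoder $\Psi$ with $\Psi\circ\Psi' = \id_{\mathcal{M}}$ in the sense of \Cref{sec:reduction}. Hence the latent size $n = 2a+1$ suffices, giving $n \le 2a+1$.

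The main obstacle is conceptual rather than technical: the statement names $2a+1$ the \emph{minimum} latent dimension, but the embedding theorem only delivers an upper bound (sufficiency), not optimality, so I would be careful to phrase the conclusion as $n \le 2a+1$ and not claim this value is unimprovable for particular manifolds. Two secondary points deserve attention. Because $[0,T]$ and $\mathcal{P}$ carry boundaries, $\mathcal{M}$ may be a manifold with boundary; this is harmless, since an $a$-manifold with boundary still has covering dimension $a$ and the embedding theorem applies verbatim. Moreover, the embedding obtained is purely topological --- $\Psi'$ need only be continuous and injective --- which is exactly consistent with the subsequent neural-network approximation, as the universal approximation results invoked later target continuous maps rather than smooth ones.
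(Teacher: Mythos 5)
Your proposal is correct and follows essentially the same route as the paper: the bound $n \le 2a+1$ is obtained by applying the Menger--N\"obeling embedding theorem to the compact $a$-manifold $\mathcal{M}$ guaranteed by \Cref{th:manifold}, exactly as in the paper's proof of \Cref{th:latent_dim}. Your additional checks (metrizability, compactness of $\mathcal{M}$ as the continuous image of $[0,T]\times\mathcal{P}$, continuity of the inverse via the compact-to-Hausdorff argument, and the caveat that $2a+1$ is an upper bound rather than a sharp minimum) are sound refinements of the paper's terser argument, not a different method.
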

\begin{obs}{}\label{remark:augmentation}
Global injectivity can be obtained by augmenting the solution set with parameters and time, i.e. considering the set $\{(u_N(t;\mu), t, \mu)\}_{t \in [0,T],\, \mu \in \mathcal{P}}$; see Example 3.5 of \cite{Lee2000} and \cite{Franco2022}. \\
By the injectivity and continuity of $(t,\mu)\to(u_N(t;\mu),t,\mu)$, and by the compactness of $\mathcal{P}$, follows that there exist a global homeomorphism to $\mathbb{R}^a$. The minimum number of coordinates required to describe $\mathcal{M}$ is the same as that of $[0,T]\times\mathcal{P}$ (see \cite{Franco2022}\footnote{Note that in Theorem 1 of \cite{Franco2022}, Hilbert spaces are considered, although the result also holds for more general settings.}), which is equal to $a$. Thus, we can achieve the lowest latent size $n = a$.
\end{obs}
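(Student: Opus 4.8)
The plan is to exhibit the augmentation map explicitly, show it is a global homeomorphism onto $[0,T]\times\mathcal{P}$, and then run a dimension-counting argument that pins $n=a$ down both as \emph{achievable} and as \emph{minimal}. Concretely, I would define
$\Phi : [0,T]\times\mathcal{P} \to \mathbb{R}^{N+a}$ by $\Phi(t,\mu) = (u_N(t;\mu),\, t,\, \mu)$, whose image is exactly the augmented solution set $\{(u_N(t;\mu),t,\mu)\}_{t\in[0,T],\,\mu\in\mathcal{P}}$. The point of the augmentation, relative to \Cref{th:manifold}, is that the appended coordinates $(t,\mu)$ are retained in the codomain, so global injectivity becomes automatic rather than a standing hypothesis.

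First I would verify that $\Phi$ is a homeomorphism onto its image. Injectivity is immediate: if $\Phi(t_1,\mu_1)=\Phi(t_2,\mu_2)$, then reading off the last $a$ coordinates forces $(t_1,\mu_1)=(t_2,\mu_2)$. Continuity of $\Phi$ reduces to continuity of $(t,\mu)\mapsto u_N(t;\mu)$, which follows from well-posedness of \eqref{eq:Node_mu} together with the assumed continuous dependence of $F_N$ and $u_{N0}$ on $\mu$ (the standard continuous-dependence theorem for ODEs). Since $[0,T]\times\mathcal{P}$ is compact, being a product of compact sets, and $\mathbb{R}^{N+a}$ is Hausdorff, the continuous bijection $\Phi$ onto its image is automatically a homeomorphism. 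This delivers the claimed global homeomorphism between the augmented set and $[0,T]\times\mathcal{P}$.

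Next I would carry out the dimension count. Because $\mathcal{P}\subset\mathbb{R}^{a-1}$ is compact with nonempty interior, it has topological dimension $a-1$, so $[0,T]\times\mathcal{P}$ has topological dimension $1+(a-1)=a$; by invariance of the Lebesgue covering dimension under homeomorphism, the augmented manifold has dimension $a$ as well. Achievability of $n=a$ is then constructive: composing $\Phi^{-1}$ with the inclusion $[0,T]\times\mathcal{P}\hookrightarrow\mathbb{R}^a$ yields a continuous injective encoder realizing lossless compression into $\mathbb{R}^a$, so one may take $\Psi'=\Phi^{-1}$ on the augmented set. For minimality, any lossless encoder is a topological embedding of an $a$-dimensional compact space into $\mathbb{R}^n$, and since its image then has dimension at most $n$ while dimension is a homeomorphism invariant, we get $n\geq a$; this is precisely the dimension-preservation statement invoked from \cite{Franco2022}. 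Combining the two halves gives the lowest latent size $n=a$.

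The step I expect to be the main obstacle is the minimality half of the dimension argument. Unlike the Menger--N\"obeling bound used in \Cref{th:latent_dim}, which only supplies the sufficiency $n\leq 2a+1$ via an embedding-\emph{existence} theorem, here one must argue that $a$ coordinates genuinely cannot be undercut, which requires invoking the correct dimension-theoretic fact, namely invariance of the covering dimension under homeomorphism (equivalently, that an embedding into $\mathbb{R}^n$ forces $n\geq\dim$). A secondary subtlety is making the continuity of $(t,\mu)\mapsto u_N$ rigorous enough to apply the compact-to-Hausdorff homeomorphism criterion; this is exactly where the well-posedness and continuous-dependence hypotheses of \Cref{th:manifold} do the real work.
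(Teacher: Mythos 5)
Your proposal is correct and follows essentially the same route as the paper: the appended $(t,\mu)$ coordinates make injectivity automatic, continuity plus compactness of $[0,T]\times\mathcal{P}$ gives the global homeomorphism via the compact-to-Hausdorff criterion, and the dimension count yields $n=a$. The only difference is cosmetic: where the paper cites \cite{Franco2022} for the minimality of $a$ coordinates, you spell out the underlying dimension-theoretic facts (invariance of covering dimension under homeomorphism and $\dim \leq n$ for subsets of $\mathbb{R}^n$) explicitly, which makes the argument self-contained but does not change its substance.
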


\begin{remark}
    In this paper, we assume a setting in which the number of parameters is much smaller than the size of \Node\ (which may arise from the spatial discretization of a PDE, so $N$ is expected to be very large), i.e., $a \ll N$. The upper bound for $n$, namely $n \leq 2a+1$, also satisfies the property of being much smaller than $N$, resulting in an effective reduction.
\end{remark}

\subsection{ODE of size $n$}
We now aim to utilize the information originating from \Node\ to better describe $\mathcal{M}$. To this end, we recall the definition of the $n$-solution $u_n = \Psi'(u_N)$, with $\Psi' \in \mathcal{C}^1$ as an encoder. Its dynamic is determined by $F_N$ and the autoencoder:
\begin{align}\label{eq:def_F_n}
\begin{aligned}
    &\dot{u}_n = \partialder{\Psi'(u_N)}{t} = \partialder{\Psi'}{u_N}\partialder{u_N}{t} = J_{\Psi'}\partialder{u_N}{t} = \\
    &= J_{\Psi'}(u_N)F_N(u_N) = J_{\Psi'}(\Psi(u_n))F_N(\Psi(u_n)).
\end{aligned}
\end{align}
We call $F_n(u_n) \coloneq J_{\Psi'}(\Psi(u_n))F_N(\Psi(u_n)) : \mathbb{R}^n \rightarrow \mathbb{R}^n$, and $u_{n0} \coloneq \Psi'(u_{N0}) $. Therefore, we have specified the right hand side, $F_n$, of the \node\ \eqref{eq:node_0}, that we report here for the reader's convenience
\begin{align*}
    &\dot{u}_n = F_n(u_n), \\
    &u_{n0} = \Psi'(u_{N0}).
\end{align*}
At this stage, we have linked the original \Node\ to the smaller \node.
This becomes useful in the context of multi-query scenarios where it is necessary to find $u_N(t)$ for many different values of $\mu$.
Instead of solving the \Node, where $N$ may be very large and it may derive from a discretization of a PDE, it may be convenient to learn a parameterization of $\mathcal{M}$, solve the \node, where $n$ can be much smaller than $N$, and then retrieve the $N$-solution as $u_N = \Psi(u_n)$. We will see in \Cref{sec:train} how to efficiently achieve this task from a practical standpoint. We now aim to better characterize $u_N$ in terms of $u_n$. However, before proceeding with the analysis, it is necessary to compute some useful preliminary quantities.

\subsection{Preliminary properties}\label{sec:ode_reduction}

In this section, we collect a set of auxiliary results that will be instrumental in the theoretical analysis of the proposed reduction method and its numerical discretization. These properties concern both geometric and functional aspects of the encoding and decoding maps, and provide the foundation for the error estimates and approximation results presented in the subsequent sections. For clarity and completeness, we state them independently before incorporating them into the main convergence analysis.

\begin{proposition}[Lipschitz constant of ${F_n}$]\label{th:L_{F_n}}
	We assume that \rev{ $\Psi$, $\Psi'$ and $F_N$} are Lipschitz continuous with Lipschitz constants \rev{$L_\Psi$, $L_{\Psi'}$ and $L_{F_N}$}, respectively. Furthermore, we assume that the encoder is differentiable and that its Jacobian, denoted by $J_{\Psi'}$, has a Lipschitz constant $L_{J_{\Psi'}}$ and is bounded with $M_{J_{\Psi'}} \coloneqq \sup_{u_N \in \mathcal{M}} \|J_{\Psi'}(u_N)\|$. Additionally, we assume that $F_N$ is bounded, with $M_{F_N} \coloneqq \sup_{u_N\in\mathcal{M}} \|F_N(u_N)\|$. From the definition of $F_n$, we compute an upper bound, $\overline{L}_{F_n,\mu}$, for its Lipschitz constant $L_{F_n}$ (see \Cref{sec:computations} for the detailed steps):
	\begin{align}\label{eq:L_F_n}
	\begin{aligned}
	    &\|F_n(u_{n1}) - F_n(u_{n2})\| = \|J_{\Psi'}(\Psi(u_{n1}))F_N(\Psi(u_{n1})) - J_{\Psi'}(\Psi(u_{n2}))F_N(\Psi(u_{n2}))\| \leq \\
	    & \leq \underbrace{(M_{J_{\Psi'}}L_{F_N} + M_{F_N}L_{J_{\Psi'}}) L_\Psi}_{= \overline{L}_{F_n,\mu}} \|u_{n1} - u_{n2}\|,
	\end{aligned}
	\end{align}
	%
	Note that $\overline{L}_{F_n,\mu}$ depends on $\mu$ since the quantities related to $F_N$ do. Whenever necessary, we take the maximum over the parameters: $\overline{L}_{F_n} = \max_{\mu} \overline{L}_{F_n,\mu}$, and we have $L_{F_n} \leq \overline{L}_{F_n}$. Lipschitz continuity ensures that the solution of the \node\ exists and is unique \cite{Butcher2016, Quarteroni2007}.
\end{proposition}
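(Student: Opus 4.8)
The plan is to view $F_n(u_n) = J_{\Psi'}(\Psi(u_n))\,F_N(\Psi(u_n))$ as the product of a matrix-valued map and a vector-valued map, both obtained by precomposing with the decoder $\Psi$, and to control the difference of two such products by the standard telescoping (``add and subtract a cross term'') argument. Concretely, I would first set $v_i := \Psi(u_{ni})$ for $i=1,2$ and write

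\[
F_n(u_{n1}) - F_n(u_{n2})
= J_{\Psi'}(v_1)\bigl(F_N(v_1)-F_N(v_2)\bigr)
+ \bigl(J_{\Psi'}(v_1)-J_{\Psi'}(v_2)\bigr)F_N(v_2).
\]

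The first term isolates the variation of $F_N$ at a fixed (bounded) Jacobian, while the second isolates the variation of the Jacobian against a fixed (bounded) right-hand side.

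Next I would apply the triangle inequality together with submultiplicativity of the induced matrix norm, and insert the four hypotheses of the proposition: $\|J_{\Psi'}(v_1)\| \leq M_{J_{\Psi'}}$, $\|F_N(v_1)-F_N(v_2)\| \leq L_{F_N}\|v_1-v_2\|$, $\|J_{\Psi'}(v_1)-J_{\Psi'}(v_2)\| \leq L_{J_{\Psi'}}\|v_1-v_2\|$, and $\|F_N(v_2)\| \leq M_{F_N}$. This yields $\|F_n(u_{n1})-F_n(u_{n2})\| \leq (M_{J_{\Psi'}}L_{F_N} + M_{F_N}L_{J_{\Psi'}})\|v_1-v_2\|$. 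Finally, I would close the estimate using Lipschitz continuity of the decoder, $\|v_1-v_2\| = \|\Psi(u_{n1})-\Psi(u_{n2})\| \leq L_\Psi\|u_{n1}-u_{n2}\|$, which produces exactly the claimed constant $\overline{L}_{F_n,\mu} = (M_{J_{\Psi'}}L_{F_N} + M_{F_N}L_{J_{\Psi'}})L_\Psi$. Taking the maximum over $\mu$ of the $\mu$-dependent quantities then gives the uniform constant $\overline{L}_{F_n}$.

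The computation is routine, so the only points requiring care are bookkeeping rather than genuine difficulty. First, the norm on $J_{\Psi'}$ must be the operator norm induced by the chosen vector norm, so that $\|J_{\Psi'}(v_1)\,w\| \leq M_{J_{\Psi'}}\|w\|$ for any vector $w$ and the product estimates are valid; this is consistent with the definition $M_{J_{\Psi'}} := \sup_{u_N}\|J_{\Psi'}(u_N)\|$. Second, the appearance of $L_{F_N}$ in the bound presupposes that $F_N$ is Lipschitz on $\mathcal{M}$ with constant $L_{F_N}$; I would state this explicitly, noting that it is already implied by the well-posedness hypothesis on the \Node\ assumed in \Cref{th:manifold}. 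Third, all suprema defining $M_{J_{\Psi'}}$ and $M_{F_N}$ are finite because they are taken over the compact manifold $\mathcal{M}$ with continuous integrands, so $\overline{L}_{F_n,\mu}$ is indeed finite.
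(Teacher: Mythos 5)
Your proof is correct and takes essentially the same route as the paper's own computation in \Cref{sec:computations}: the identical add-and-subtract of the cross term $J_{\Psi'}(\Psi(u_{n1}))F_N(\Psi(u_{n2}))$, followed by the triangle inequality, the four bounds $M_{J_{\Psi'}}$, $L_{F_N}$, $L_{J_{\Psi'}}$, $M_{F_N}$, and the closing estimate $\|\Psi(u_{n1})-\Psi(u_{n2})\| \leq L_\Psi\|u_{n1}-u_{n2}\|$. Your bookkeeping remarks (that the norm on $J_{\Psi'}$ must be the induced operator norm, that Lipschitz continuity of $F_N$ with constant $L_{F_N}$ should be stated explicitly since the proposition only assumes boundedness of $F_N$, and that compactness of $\mathcal{M}$ makes the suprema finite) are sound points that the paper leaves implicit.
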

\rev{We anticipate here that $\Psi$, $\Psi'$, and $F_N$ will be approximated by neural networks. Similar conclusions to those of the above proposition can be drawn for neural networks. In this case, it is also possible to relate upper bounds of the Lipschitz constants to the architecture of the network, see \cite{Pintore2024}.
}
\begin{proposition}[Equivalence of autoencoders]\label{th:equivalence_autoenc}
	Let $g: \mathbb{R}^n \rightarrow \mathbb{R}^n$ be an invertible function. The composition $\Psi \circ g^{-1} \circ g \circ \Psi'$ describes the same autoencoder $\Psi \circ \Psi'$ but allows us to consider different \rev{$n$-solutions}. \rev{Indeed, let $u_n^g = g(u_n)$, $\Psi'^g = g \circ \Psi'$, and $\Psi^g = \Psi \circ g$, we have that $u_n \neq u_n^g$ while $\Psi \circ \Psi'$ and $\Psi^g \circ \Psi'^g$ represents the same autoencoder in the sense that they both coincide with $\id_\mathcal{M}$.}
\end{proposition}
\begin{proposition}[Lipschitz constants of scaled problem]
Let us consider the scaled mapping $g(u_n) = K_s u_n$ with $K_s \in \mathbb{R}^+$, which represents a linear scaling of the $n$-solution. We now proceed to compute the Lipschitz constants of the scaled encoder and decoder, defined respectively as $\tilde{\Psi}' = g \circ \Psi'$ and $\tilde{\Psi} = \Psi \circ g^{-1}$.
	\begin{align*}
		&\| \tilde{\Psi}(u_{n1}) - \tilde{\Psi}(u_{n2}) \| = \| \Psi(g^{-1}(u_{n1})) - \Psi(g^{-1}(u_{n2})) \| \leq L_\Psi \| g^{-1}(u_{n1}) - g^{-1}(u_{n2}) \| = 1/K_sL_\Psi \|u_{n1} - u_{n2}\|, \\
		&\| \tilde{\Psi}'(u_{N1}) - \tilde{\Psi}'(u_{N2}) \| = \| g(\Psi'(u_{N1})) - g(\Psi'(u_{N2})) \| = K_s \| \Psi'(u_{N1}) - \Psi'(u_{N2}) \| \leq K_sL_{\Psi'} \|u_{N1} - u_{N2}\|,
	\end{align*}
	so $L_{\tilde{\Psi}'} = K_sL_{\Psi'}$ and $L_{{\tilde{\Psi}}} = 1/K_sL_\Psi$.
	For the right-hand side $F_n$, we have that the Lipschitz constant remains invariant under the scaling, i.e., $L_{\Tilde{F}_n} = L_{F_n}$ (see \Cref{sec:computations} for the computation). In order to modify $L_{F_n}$, a nonlinear $g$ is required.
\end{proposition}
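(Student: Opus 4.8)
The plan is to handle the three assertions in order, noting that the encoder and decoder bounds are immediate and that the only substantive point is the invariance of $L_{F_n}$. For $\tilde{\Psi}' = g \circ \Psi'$ and $\tilde{\Psi} = \Psi \circ g^{-1}$, the scaling $g(u_n) = K_s u_n$ and its inverse $g^{-1}(u_n) = u_n / K_s$ are globally Lipschitz with constants $K_s$ and $1/K_s$; since composing Lipschitz maps multiplies their constants, the displayed estimates in the statement give directly $L_{\tilde{\Psi}'} = K_s L_{\Psi'}$ and $L_{\tilde{\Psi}} = (1/K_s) L_\Psi$. No further argument is needed here.

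The crux is to identify the reduced right-hand side of the scaled problem. First I would apply the defining formula \eqref{eq:def_F_n} to the scaled autoencoder: since $\tilde{\Psi}' = g \circ \Psi'$ is a linear rescaling, the chain rule gives $J_{\tilde{\Psi}'} = K_s J_{\Psi'}$, while $\tilde{\Psi}(\tilde{u}_n) = \Psi(g^{-1}(\tilde{u}_n)) = \Psi(\tilde{u}_n / K_s)$. Substituting into the reduced dynamics yields
\begin{equation*}
	\tilde{F}_n(\tilde{u}_n) = J_{\tilde{\Psi}'}(\tilde{\Psi}(\tilde{u}_n))\, F_N(\tilde{\Psi}(\tilde{u}_n)) = K_s\, J_{\Psi'}(\Psi(\tilde{u}_n/K_s))\, F_N(\Psi(\tilde{u}_n/K_s)) = K_s\, F_n(\tilde{u}_n/K_s),
\end{equation*}
that is, $\tilde{F}_n = g \circ F_n \circ g^{-1}$, the conjugation of $F_n$ by $g$. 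This is fully consistent with \Cref{th:equivalence_autoenc}, which ensures the scaled and original autoencoders coincide while the reduced solution $\tilde{u}_n = g(u_n)$ is merely rescaled.

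Then I would estimate the Lipschitz constant of $\tilde{F}_n$ by inserting this identity: the outer factor $K_s$ inherited from $J_{\tilde{\Psi}'}$ multiplies the estimate, whereas evaluation at $\tilde{u}_n/K_s$ contracts the arguments of $F_n$ by $1/K_s$, so applying the Lipschitz bound of $F_n$ produces exactly these two reciprocal factors, which cancel:
\begin{equation*}
	\|\tilde{F}_n(\tilde{u}_{n1}) - \tilde{F}_n(\tilde{u}_{n2})\| = K_s \|F_n(\tilde{u}_{n1}/K_s) - F_n(\tilde{u}_{n2}/K_s)\| \leq K_s L_{F_n}\, \frac{1}{K_s}\,\|\tilde{u}_{n1} - \tilde{u}_{n2}\| = L_{F_n}\|\tilde{u}_{n1} - \tilde{u}_{n2}\|,
\end{equation*}
giving $L_{\tilde{F}_n} = L_{F_n}$, with equality since the cancellation is sharp. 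For the final remark that a non-linear $g$ is required to alter $L_{F_n}$, I would attribute the cancellation precisely to the constant Jacobian of a linear $g$, which forces the two competing factors to be reciprocal (condition number one for a scalar scaling); a nonlinear $g$ has a state-dependent Jacobian $J_g(u_n)$ whose effect on $\tilde{F}_n = g \circ F_n \circ g^{-1}$ no longer factors into reciprocal constants, so the Lipschitz constant can genuinely change. The main obstacle I anticipate is purely bookkeeping, namely keeping the chain rule for $J_{\tilde{\Psi}'}$ and the change of variables $u_n = \tilde{u}_n / K_s$ consistent so that the powers of $K_s$ are correctly paired before cancelling; once the conjugation identity $\tilde{F}_n = g \circ F_n \circ g^{-1}$ is in place there is no analytic difficulty.
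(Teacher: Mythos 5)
Your proposal is correct and follows essentially the same route as the paper: the paper derives the identity $\tilde{F}_n(w_n) = K_s\,F_n(w_n/K_s)$ by substituting $w_n = K_s u_n$ into the scaled \node, while you obtain the same identity by applying the definition \eqref{eq:def_F_n} to the scaled pair $(\tilde{\Psi}', \tilde{\Psi})$ via the chain rule; in both cases the Lipschitz estimate then hinges on the identical cancellation of the factors $K_s$ and $1/K_s$, and the remark on nonlinear $g$ matches the paper's observation that $\dot{w}_n = J_g F_n(g^{-1}(w_n))$ has a state-dependent Jacobian. The only cosmetic difference is your conjugation framing $\tilde{F}_n = g \circ F_n \circ g^{-1}$, which is a clean way to package the same computation (note that, strictly, both you and the paper prove only $L_{\tilde{F}_n} \le L_{F_n}$; the reverse inequality follows by conjugating back with $g^{-1}$, which your symmetric framing makes immediate).
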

\begin{proposition}[Initial conditions for $u_n$]
	Let $g_{\mu}(u_n) = u_n - u_{n,0}$ be a $\mu$-dependent function. The use of $g_\mu$ allow us to modify the initial conditions. In particular, defining $\overline{u}_n \coloneq g_{\mu}(u_n) = u_n - u_{n,0}$, we obtain the following \node:
	%
	%
	%
	\begin{align*}
		&\dot{\overline{u}}_n = F_n(\overline{u}_n + u_{n,0}) \eqcolon \overline{F}_n(\overline{u}_n), \\
		&\overline{u}_n(0) = 0.
	\end{align*}
	Then, $u_N$ is the retrieved by re-adding the initial condition: $u_N = \Psi(\overline{u}_n + u_{n,0})$. It is therefore possible to set the initial condition to zero for any value of the parameters, assuming the knowledge of (or the possibility to approximate) $g_\mu$. Otherwise, the initial conditions are computed with the encoder: $u_{n,0} = \Psi'(u_{N,0})$.
\end{proposition}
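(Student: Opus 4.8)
The plan is to treat this as a straightforward affine change of variables and to recognize it as a particular instance of \Cref{th:equivalence_autoenc} with the invertible map $g_\mu(u_n) = u_n - u_{n,0}$. The essential observation, which I would state first, is that although $u_{n,0} = \Psi'(u_{N,0})$ depends on the parameter $\mu$, it is \emph{constant in time} along any fixed trajectory; this is precisely what makes the shift inert under differentiation and is the only point in the argument that genuinely requires attention.

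With this in hand, I would differentiate the definition $\overline{u}_n = u_n - u_{n,0}$ with respect to $t$. Since $u_{n,0}$ is time-independent, $\dot{\overline{u}}_n = \dot{u}_n = F_n(u_n) = F_n(\overline{u}_n + u_{n,0})$, and I would simply \emph{define} $\overline{F}_n(\overline{u}_n) \coloneq F_n(\overline{u}_n + u_{n,0})$ to recover the claimed reduced dynamics. The initial condition then follows by evaluating at $t=0$: $\overline{u}_n(0) = u_n(0) - u_{n,0} = \Psi'(u_{N,0}) - u_{n,0} = 0$, using $u_{n,0} = \Psi'(u_{N,0})$ from \eqref{eq:node_0}.

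For the reconstruction step I would invert the change of variables, writing $u_n = \overline{u}_n + u_{n,0}$ and hence $u_N = \Psi(u_n) = \Psi(\overline{u}_n + u_{n,0})$. This identifies the modified decoder as $\tilde{\Psi} = \Psi \circ g_\mu^{-1}$ with $g_\mu^{-1}(\overline{u}_n) = \overline{u}_n + u_{n,0}$, so that $\Psi \circ g_\mu^{-1} \circ g_\mu \circ \Psi'$ reproduces the original autoencoder $\Psi \circ \Psi'$, exactly as guaranteed by \Cref{th:equivalence_autoenc}.

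There is no genuine technical obstacle here; the difficulty, such as it is, is conceptual rather than computational. One must keep track of the fact that $g_\mu$ is parameter-dependent, so the shift $u_{n,0}$ has to be chosen per parameter value and held fixed along the corresponding trajectory, which is what decouples the time differentiation from the $\mu$-dependence. The closing observation, that in practice $u_{n,0}$ is obtained from the encoder via $u_{n,0} = \Psi'(u_{N,0})$ whenever $g_\mu$ is not otherwise available, requires no proof beyond noting consistency with the definition of the reduced initial condition.
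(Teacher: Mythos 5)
Your proposal is correct and follows essentially the same route as the paper: the proposition is justified there by exactly this change of variables, which is a special case of the paper's equivalence-of-autoencoders observation (\Cref{th:equivalence_autoenc}) and of the general transformation rule $\dot{w}_n = J_g F_n(g^{-1}(w_n))$ given in \Cref{sec:computations}, here with $J_{g_\mu} = \mathbb{I}$ since the shift $u_{n,0}$ is constant in time. Your explicit emphasis that the $\mu$-dependent shift is time-independent along each trajectory is precisely the point the paper leaves implicit, and nothing in your argument deviates from it.
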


\subsection{Stability after exact reduction}\label{sec:stability_exact}
We aim to establish the Lyapunov stability, i.e. an estimate for the discrepancy between the exact solution and that derived from a perturbed ODE. To emphasize the impacts of the reduction, we perturb both \Node\ and \node. Subsequently, we calculate the error between the exact solution and the reconstructed $u_N$. Additionally, establishing stability in the Lyapunov sense, paired with Lipschitz continuity, ensures the well-posedness of the \node\ \cite{Quarteroni2007}. Moreover, this study will be instrumental, in \Cref{sec:stability_exact_nn}, in studying the perturbations associated with the approximation errors of neural networks.\\
Let us consider a single trajectory. Given $\eps_0$, $\eps$, $\gamma_0$, $\gamma$ $>0$, we define the perturbations as follows.
\begin{itemize}
	\item \Node: $\Delta_0 \in \mathbb{R}^n$, and $\Delta(t): (0, T] \to\mathbb{R}^n$, with $\|\Delta_0\| < \eps_0$, and  $\|\Delta(t)\| < \eps, \forall t$; 
	\item \node: 
	$\delta_0 \in \mathbb{R}^n$, and $\delta(t): (0, T] \to\mathbb{R}^n$, with $\|\delta_0\| < \gamma_0$, and $\|\delta(t)\| < \gamma, \forall t$. 
\end{itemize} 
We call $z_N$ the solution of the perturbed \Node:
\begin{align*}
    &\dot{z}_N = F_N(z_N) + \Delta, \\
    &z_N(0) = u_{N0} + \Delta_0.
\end{align*}
%
%
%
In addition, we perturb \node\ and we call $w_n$ the solution of the perturbed \node, obtained from the reduction of the perturbed \Node:
\begin{align*}
    &\dot{w}_n = F_n(w_n) + J_{\Psi'}\Delta + \delta, \\
    &w_n(0) = \Psi'(u_{N0} + \Delta_0) + \delta_0.
\end{align*}
We now compute the discrepancy between the reconstructed perturbed solution $w_N = \Psi(w_n)$ and $u_N$. We obtain the following result, see \Cref{sec:computations} for the computations:
\begin{equation}\label{eq:lyapunov_stab_node}
	\|w_n(t) - u_n(t)\| \leq \Big( L_{\Psi'}\|\Delta_0\| + \|\delta_0\| + \big(M_{\Psi'}\|\Delta\| + \|\delta\|\big)t \Big) e^{L_{F_n,\mu}t},
\end{equation}
for the discrepancy of $n$-solutions, and:
\begin{equation}\label{eq:lyapunov_stab_Node}
	\|w_N(t) - u_N(t)\| \leq L_\Psi\Big( L_{\Psi'}\|\Delta_0\| + \|\delta_0\| + \big(M_{\Psi'}\|\Delta\| + \|\delta\|\big)t \Big) e^{L_{F_n,\mu}t},
\end{equation}
for the discrepancy between the reconstructed and the original solution. Note that these bounds depend on $\mu$ because $L_{F_n,\mu}$ does. For a global bound, we can replace $L_{F_n,\mu}$ with $L_{F_n}$.

\begin{theorem}[Well-posedness of reduced problem]\label{th:well_posedness_node}
    \textit{Under the assumption of \Cref{th:L_{F_n}}, the reduced problem
	\begin{align*}
		\begin{cases}
			u_N = \Psi(u_n), \\
			\dot u_n = F_n(u_n), \\
			u_n(0) = \Psi'(u_{N0}).
		\end{cases}
	\end{align*}
	is well-posed.}
\end{theorem}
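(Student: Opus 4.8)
The plan is to verify the three Hadamard criteria---existence, uniqueness, and continuous dependence on the data---for the coupled system, exploiting the fact that the reconstruction $u_N = \Psi(u_n)$ is merely the image of the $n$-solution under the Lipschitz decoder $\Psi$. Consequently the whole burden reduces to establishing well-posedness of the \node\ $\dot u_n = F_n(u_n)$ with $u_n(0) = \Psi'(u_{N0})$, and then transferring each conclusion to $u_N$ through $\Psi$.

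First I would settle existence and uniqueness of the reduced trajectory. By \Cref{th:L_{F_n}} the right-hand side $F_n$ is Lipschitz continuous, with constant $L_{F_n} \leq \overline{L}_{F_n}$, and the initial datum $u_n(0) = \Psi'(u_{N0})$ is a well-defined point of $\mathbb{R}^n$ since $\Psi'$ is continuous. The classical Picard--Lindel\"of (Cauchy--Lipschitz) theorem then furnishes a unique solution $u_n \in \mathcal{C}^1([0,T];\mathbb{R}^n)$. Composing with $\Psi$ pointwise in time yields a unique reconstructed trajectory $u_N = \Psi(u_n)$: its uniqueness follows at once from that of $u_n$ together with the single-valuedness of $\Psi$.

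For the remaining criterion, continuous dependence on the data, I would invoke the Lyapunov stability estimates \eqref{eq:lyapunov_stab_node} and \eqref{eq:lyapunov_stab_Node} derived in \Cref{sec:stability_exact}. Letting the perturbations $\Delta_0$, $\Delta$, $\delta_0$, $\delta$ tend to zero, these inequalities show that the discrepancy between a perturbed and the unperturbed solution vanishes continuously with the perturbation size---both for $u_n$ and, after multiplication by $L_\Psi$, for the reconstructed $u_N$. This is exactly the stability requirement, and it confirms that the decoding step does not spoil well-posedness: the factor $L_\Psi$ only rescales the perturbation amplitude.

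Since the proof is an assembly of standard ODE results, no single step presents a genuine difficulty; the only point demanding care is the bookkeeping needed to confirm that the estimates of \Cref{sec:stability_exact} account for perturbations of both the initial condition and the forcing term, so that all data entering the problem are controlled. Once this is checked, the three criteria combine to give the asserted well-posedness.
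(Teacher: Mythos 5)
Your proposal is correct and takes essentially the same route as the paper: the paper's proof likewise combines the Lipschitz continuity of $F_n$ from \Cref{th:L_{F_n}} (yielding existence and uniqueness of the \node\ solution) with the Lyapunov stability estimate \eqref{eq:lyapunov_stab_Node} (yielding continuous dependence on the data), citing these as sufficient for well-posedness. You merely make explicit the Picard--Lindel\"of step and the transfer of each property through the Lipschitz decoder $\Psi$, which the paper leaves implicit.
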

\begin{proof}
	The proof follows from \Cref{th:L_{F_n}} and Lyapunov stability \eqref{eq:lyapunov_stab_Node}, which are sufficient requirements for well-posedness \cite{Quarteroni2007}.
\end{proof}

\paragraph{Scaling reduced problem for improved stability.}

We can exploit the scaling to reduce the effects of the perturbation of the \node\ propagated to the reconstructed $u_N$. Rescaling the autoencoder with a factor $K_s$: $\Tilde{\Psi}' \coloneqq K_s \Psi'$, and $\Tilde{\Psi}(u_n) \coloneqq  \Psi(1/K_s u_n)$, \rev{and as a consequence $\Tilde{F}_n(u_n) = J_{\Tilde{\Psi}'}F_N(\Tilde{\Psi}(u_n))$}, we easily obtain the following relations: $J_{\Tilde{\Psi}'} = K_s J_{\Psi'}$, $M_{\Tilde{\Psi}'} = K_s M_{\Psi'}$, $L_{\Tilde{\Psi}'} = K_s L_{\Psi'}$, and $L_{\Tilde{\Psi}} = 1/K_s L_\Psi$. It follows that
\begin{equation}\label{eq:lyapunov_scaled}
	\|w_N(t) - u_N(t)\| \leq L_\Psi\Big( L_{\Psi'}\|\Delta_0\| + 1/K_s\|\delta_0\| + \big(M_{\Psi'}\|\Delta\| + 1/K_s\|\delta\|\big)t \Big) e^{L_{\tilde{F}_{n,\mu}}t}.
\end{equation}
\rev{We first observe that the bound for the perturbation of the \node\ is influenced by the decoder, as reflected in the terms $L_\Psi / K_s \|\delta_0\|$ and $L_\Psi / K_s \|\delta\|$. In general $L_{\tilde{F}_{n,\mu}} \neq L_{F_{n,\mu}}$ although it is possible to check that they have the same upper bound according to the computations in \Cref{sec:computations}; the exact values of the Lipschitz constants depend on the specific problem at hand and on the nonlinearities of $F_N$. Since $L_\Psi \geq 1$, the bound is increased by the presence of the decoder, and we can expect that the decoder amplifies the perturbation of the \node. However, due to the presence of the scaling factor $K_s$, we conclude the following:}
\begin{proposition}\label{th:scaling}
	\rev{
		Supposing that the maximum norm of the perturbation of the \node\ is independent of the scaling $K_s$, and that $F_n$ and $\Tilde{F}_n$ share the same Lipschitz constant, the bound for stability decreases as $K_s$ increases, see \eqref{eq:lyapunov_scaled}.
	}
\end{proposition}
\begin{example}{SIR.}
We present now a numerical example \rev{where the scaling is practically effective. We consider here} the SIR (susceptible, infected, recovered) model of infectious diseases:
\begin{equation}\label{eq:sir}
	\begin{cases*}
		\dot S = -\frac{\beta}{N} IS,\\
		\dot I = \frac{\beta}{N} IS-\gamma I,\\
		\dot R = \gamma I,\\
	\end{cases*}
\end{equation}
where $S$ represents the susceptible population, $I$ infected people and $R$ recovered people, $N$ is the total number of people. \rev{The initial conditions are: $S(0) = 90$, $I(0) = 10$, $R(0)=0$}. The recovery rate, $\gamma$, and the infection rate, $\beta$, are considered uncertain and randomly sampled 200 times in the range $[0.5, 2.5]$ to create $\mathcal{M}$. The autoencoder is made of fully connected layers and is trained until the mean squared error loss reaches a low values of about $10^{-5}$. The latter is computed on the test dataset, made of 50 samples. $F_n$ is computed from \eqref{eq:def_F_n}, after the training of the autoencoder. The perturbations are set to zero except for $\delta$, which is a random value in time with a uniform distribution $\mathcal{U}(-0.2, 2)$. 
Fig.~\ref{fig:lyapunov_scaled} shows the \rev{relative $\ell_2$-error in time, $\|w_N-u_N\|_2 / \|u_{N0}\|_2$}, for two solutions of the SIR, $w_N$ and $u_N$, obtained with 2 samples of $\beta$ in the test dataset. The solutions are obtained numerically with the Forward Euler scheme with a time step size small enough to make the error introduced by the numerical integration negligible compared to $\delta$. \rev{As predicted by \Cref{th:scaling},} we see that a scaling factor of $K_s =2$ reduces the upper bounds and also the error in most of the computed time steps (yellow line). Note that this occurs because the perturbation is independent of the solution itself. Therefore, rescaling alters the ratio between the magnitude of the external perturbation and the magnitude of $u_n$. However, this condition may not hold in practical scenarios when, for instance, the perturbation arises from floating-point truncation errors, which are dependent on the magnitude of the solution. \rev{Additionally, we remark that the error bound is derived using Gr\"onwall's lemma (see \Cref{sec:computations}), which is known to be non-sharp. As a consequence, the numerical value of the bound itself may not provide a reliable reference for practical use while it remains useful for highlighting the role of the different quantities in the stability analysis.}
\begin{figure*}[h]
	\hspace*{5cm}
	\begin{minipage}{0.3\textwidth}
		\centering
		\includegraphics[width=1\textwidth]{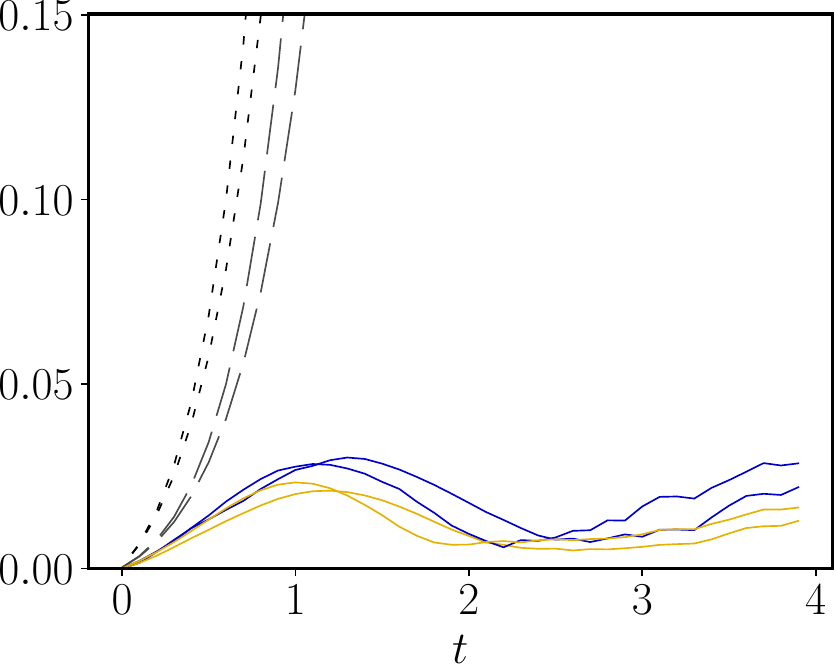}
	\end{minipage}
	\begin{minipage}{0.25\textwidth}
		\vspace{-2cm}
		\hspace{0cm}
		\includegraphics[width=1\textwidth]{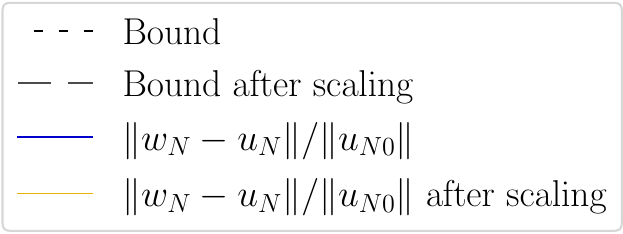}
	\end{minipage}
	\caption{Lyapunov stability. Bounds	 (black lines) for the given $\mu$ and errors \rev{$\|w_N-u_N\| / \|u_{N0}\|$} (blue and yellow lines) in time.}
	\label{fig:lyapunov_scaled}
\end{figure*}
\end{example}

\section{Analysis of the time discretization error and stability}\label{sec:time_discretization}
This section focuses on the time discretization, assuming to have access to a perfect autoencoder, i.e., one that is not subject to approximation errors introduced by neural networks. This latter source of approximation error will be addressed separately in \Cref{sec:nn_approx}.
The most important topic is the convergence of the numerical scheme, which ensures the reliability of the computed solutions. To complement this analysis, we also investigate the consistency and stability of the numerical solution.
For simplicity, we adopt the FE method as the reference scheme throughout most of the discussion.

\subsection{Convergence in time for exact autoencoder}
We consider a uniform time discretization, $t \in \{k\Delta t\}_{k=0}^K$, where $K$ is the total number of timesteps, including the initial condition. We call $\ut_n$ the discrete-in-time solution of the \node, and $\ut_N = \Psi(\ut_n)$ the corresponding reconstructed solution. 
We are interested in the error on the solution of \Node\ reconstructed from the reduced one, i.e. $\|u_N-\ut_N\|$. 
The discrete-in-time problem, $\mathrm{P}$, is: 
\begin{equation*}
	\mathrm{P}(\ut) = \begin{cases}
		\ut_N^{k+1} = \Psi(\ut_n^{k+1}), \\
		\mathscr{L}_n(\ut_n^{k+1},\ut_n^{k},\ldots,\ut_n^{k+1-P}, F_n, \Delta t) = 0, \quad k=P-1,P,\ldots\\
		\ut_n^0 = \Psi(\ut_{N0}),
	\end{cases}    
\end{equation*}
where $\ut = (\ut_N, \ut_n)$, and $\mathscr{L}_n$ is the difference operator \eqref{eq:lmm_diff_operator}.
%
%
Our goal is to ensure that the error between the N-solutions vanishes as the time step tends to zero, i.e., $\|u_N-\ut_N\| \to 0$, as $\Delta t \to 0$. By the well-posedness of the \node\ (\Cref{sec:stability_exact}) and assuming to use a convergent time integration scheme to solve the \node, the convergence of the \Node\ is simply ensured by the continuity of the decoder: $\|u_N-\ut_N\| \leq L_\Psi\|u_n-\ut_n\| \to 0$ as $\Delta t \to 0$ which holds for all the trajectories in $\mathcal{M}$.

\subsection{Consistency}
Although convergence has already been established, it remains interesting to investigate how the reduction process influences consistency to better understand the role of $\Psi'$ and $\Psi$. 
A method is said to be consistent if, upon substituting the exact solution of the \Node\ into the discrete problem, denoted by $\mathrm{P}$, the residual decays sufficiently rapidly.
Precisely, the method is consistent if $\lim_{\Delta t \to 0}\max_k|\tau_N^k| = 0$ \cite{Lambert1999, Quarteroni2007}, where $\tau_N^k$ is the local truncation error at timestep $k$ for the \Node.
In our setting, the consistency requirement can be reformulated in terms of the \node: substituting $u = (u_N, u_n)$ into $\mathrm{P}$ reads
\begin{subnumcases}{\mathrm{P}(u) = \label{eq:null_1}}
	u_N^{k+1} = \Psi(u_n^{k+1}), \label{eq:problem_consistency_2} \\
	\mathscr{L}_n(u_n^{k+1},u_n^{k},\ldots,u_n^{k+1-P}, F_n, \Delta t)  - \Delta t \, \tau_n^k = 0, \quad k=P-1,P,\ldots \label{eq:problem_consistency_3} \\
	u_n^0 = \Psi'(u_{N0}). \label{eq:problem_consistency_4}
\end{subnumcases}
Here, to simplify the notation, we added a superscript to denote the timestep: $u_N^k = u_N(t_k)$.
Equations \eqref{eq:problem_consistency_2} and \eqref{eq:problem_consistency_4} are satisfied by construction.
In \eqref{eq:problem_consistency_3}, $\Delta t \tau_n^k$ is the residual written in terms of the local truncation error of the \node, $\tau_n$. 
Equation \eqref{eq:null_1} indicates that the consistency of the reconstructed solution depends on the consistency of the \node. Consequently, using a consistent method for solving the \node\ ensures the consistency of the \Node.

To have a better insight into how the reduction process influences consistency, we now examine a one-step method, such as the FE scheme. All such methods can be written with the following difference operator\\ $\mathscr{L}_n(\ut_n^{k+1},\ut_n^{k},\ldots,\ut_n^{k+1-P}, F_n, \Delta t) = \ut_n^{k+1} - \Delta t \Phi(\ut_n^k\mg{, } F_n)$ where $\Phi(\ut_n^k, F_n)$ is called \textit{incremental function}. We proceed by substituting $u = (u_N, u_n)$ into $\mathrm{P}$ with $\mathscr{L}_n$ as previously defined:
\begin{numcases}{\mathrm{P}(u) = }
	u_N^{k+1} = \Psi(u_n^{k+1}), \nonumber  \\
	u_n^{k+1} - u_n^k - \Delta t \, \Phi(u_n^k) - \Delta t \, \tau_n^k = 0,  \label{eq:one_step_lte}\\
	u_n^0 = \Psi'(u_{N0}). \nonumber
\end{numcases}
Then, we compute a Taylor expansion centered in $u_n^k$ to express the variation of $u_n$ in a timestep: $u_n^{k+1} = u_n^k + \dot u_n \Delta t + \ddot u_n \Delta t^2/2 + \mathcal{O}(\Delta t^3)$, where $\dot u_n = F_n$ and, assuming an encoder smooth enough, $\ddot u_{n,i} = F_{N,m}\frac{\partial^2\Psi'_i(x)}{\partial x_m\partial x_j}F_{N,j} + J_{\Psi',ij}J_{F_N,jk}J_{\Psi,km}J_{\Psi',mz}F_{N,z}$, where $F_{N,m} = F_N(\Psi(u_n))_m$ and similarly for $J_{\Psi',ij}$, $J_{\Psi, ij}$, $J_{F_N,ij}$. Replacing it into \eqref{eq:one_step_lte}:
\begin{equation}\label{eq:lte_taylor}
    u_n^k + \dot u_n \Delta t + \ddot u_n\Delta t^2 / 2 + \mathcal{O}(\Delta t^3) = u_n^k + \Delta t \Phi(u_n^k) + \Delta t \tau_n^k,
\end{equation}
%
from which we obtain the local truncation error for the one-step method: 
\begin{align}\label{eq:lte}
    \tau_n^k = \dot u_n - \Phi(u_n^k; \Delta t) +  \ddot u_n\Delta t / 2 + \mathcal{O}(\Delta t^2).
\end{align}
By the fact the $\lim_{\Delta t \to 0} \Phi(u_n; \Delta t) = F_n(u_n)$, we have that $\lim_{\Delta t \to 0}\max_k \tau_n^k = 0$, so consistency is confirmed. We write explicitly $\tau_n^k$ for FE: 
\begin{equation*}
	\tau_n^k = \frac{1}{2}F_{N,m}\frac{\partial^2\Psi'_i(x)}{\partial x_m\partial x_j}F_{N,j} \Delta t + \frac{1}{2}J_{\Psi',ij}J_{F_N,jk}J_{\Psi,km}J_{\Psi',mz}F_{N,z}\Delta t + \mathcal{O}(\Delta t^2),
\end{equation*}
where we note that the encoder affects $\tau_n$ with both its second and first derivatives, while the decoder appears in terms of its Jacobian, as argument in $F_N$.

\subsection{Zero stability}
We aim to understand how the reduction, and therefore the autoencoder, impacts the zero stability of the time-marching method, focusing on one-step schemes, such as the FE scheme. Similarly to \Cref{sec:stability_exact}, we consider a perturbed problem with perturbed solution, $\wt_n^k$, at timestep $k$ and bounded perturbations $\Delta$, $\Delta_0$, $\delta$, $\delta_0$:
\begin{align*}
    &\wt_n^{k+1} = \wt_n^k + \Delta t \Phi(\wt_n^k) + \Delta t J_{\Psi'}\Delta + \Delta t \delta, \\
    &\wt_n^0 = \Psi'(\ut_{N0} + \Delta_0) + \delta_0.
\end{align*}
For simplicity, we sum the perturbations contribution, so we group them in a unique \rev{$\overline{\delta}_0 = \Psi'(u_{N0} + \Delta_0) + \delta_0 - \Psi'(u_{N0})$}, and $\overline{\delta} = J_{\Psi'}\Delta + \delta$, bounded by $\|\overline{\delta}_0\| < \eps_0$, with $\eps_0>0$, and $\|\overline{\delta}\| < \eps$, with $\eps>0$. Thus, from the previous equation we obtain 
\begin{align*}
    &\wt_n^{k+1} = \wt_n^k + \Delta t \Phi(\wt_n^k) + \Delta t \overline{\delta}, \\
    &\wt_n^0 = \Psi'(\ut_{N0} ) + \overline{\delta}_0.
\end{align*}
Writing the solution at timestep $k+1$ in terms of the initial conditions, we have: $\ut_n^{k+1} = \ut_n^0 + \Delta t\sum_{i=0}^k\Phi(\ut_n^i)$ and
$\wt_n^{k+1} = \wt_n^0 + \Delta t\sum_{i=0}^k\Phi(\wt_n^i) + \Delta t \sum_{i=0}^k\overline{\delta}^i$ for the perturbed one. Using the triangular inequality and the discrete Gr\"onwall's lemma (see e.g., \cite{Quarteroni2007}), the error for the \node\ can be bounded as:
\begin{align*}
    \|\wt_n^{k+1} - \ut_n^{k+1}\| \leq \|\overline{\delta}_0\| + \Delta t \sum_{i=0}^k\|\Phi(\wt_n^i)-\Phi(\ut_n^i)\| + \Delta t \sum_{i=0}^k\|\overline{\delta}^i\| \leq  \left(\eps_0 + K \Delta t \eps \right)e^{(k+1)\Delta t L_\Phi},
\end{align*}
where $L_\Phi$ is the Lipshitz constant of $\Phi$. Using the Lipschitz continuity of $\Psi$, we have:
\begin{equation*}
    \|\wt_N^{k+1} - \ut_N^{k+1}\|\leq L_\Psi\left(\eps_0 + (k+1) \Delta t \eps \right)e^{(k+1)\Delta t L_\Phi}.
\end{equation*}
The latter inequality says that the perturbation on the reconstructed solution is bounded. Moreover, the decoder appears in both $L_{\Psi}$ and in $L_{\Phi}$, while the encoder affects only $L_{\Phi}$. Indeed, adopting for example FE, we have $L_\Phi = L_{F_{n,\mu}}$, where $L_{F_{n,\mu}}$ depends on both the encoder and decoder, see equation \eqref{eq:L_F_n}.

\subsection{\rev{Linear} stability}\label{sec:a_stability}

We now turn to the study of whether the \rev{regions of absolute stability are} preserved after reduction. We consider the following linear \Node, called \textit{model problem}
\begin{align}\label{eq:model_problem}
\begin{aligned}
    &\dot u_N = \Lambda u_N, \quad t \in (0,\infty), \\
    &u_N(0) = u_{N0},
\end{aligned}
\end{align}
where $\Lambda$ is a square matrix whose eigenvalues have negative real part, thus, $u_N\to0$ as $t\to\infty$.
The goal is to determine under which conditions the numerical reconstructed solution of \eqref{eq:model_problem} goes to zero as well.
After brief observations on the exact \node, we study the properties of the time-integration method in relation to the reduction.

\paragraph{Step 1: reduction of model problem.} 
The \node\ associated to \eqref{eq:model_problem} is 
\begin{align*}
	&\dot u_n = J_{\Psi'}\Lambda \Psi(u_n), \\
	&u_n(0) = \Psi'(u_{N0}).
\end{align*}
Following the three relevant combinations of encoder-decoder introduced in \Cref{sec:reduction}, we make some observations:
\begin{itemize}
    \item Linear encoder, linear decoder. We express the encoder in terms of its constant Jacobian matrix $\Psi'(u_N) = J_{\Psi'}u_N$. Given that $u_n = J_{\Psi'} u_N$, the exact $n$-solution is $u_n(t) = J_{\Psi'}\exp(\Lambda t)u_{N0}$. We observe that the $n$-solution goes to zero as the original one. Due to the linearity of $\Psi'$ and $\Psi$, the autoencoder defined in $[0, t_1]$, $t_1>0$, remains valid also for $(t_1, t_2], \ \forall t_2 > t_1$, indeed, $J_{\Psi'}$ is a constant matrix. From a practical standpoint, this means that the autoencoder can be determined on a short interval of time and we can then extrapolate the solution for any future time.  
	

	
	\item Linear encoder, nonlinear decoder. Due to the linearity of the encoder, we still have that $\lim_{t \to \infty} u_n = 0$. However, due to its nonlinearity, the decoder has to be determined over the entire manifold $\mathcal{M}$, and in particular for $t \in [0, \infty)$, in order for the autoencoder to reproduce $\id_{\mathcal{M}}$ (assuming that $\mathcal{M}$ is defined for $t\in[0, \infty)$).
	
	\item Nonlinear encoder, nonlinear decoder. Similarly to the previous case, the autoencoder must be determined over the entire time interval of interest. Note that the nonlinearity of the encoder may allow $u_n$ to converge to a non-zero stationary value as $t \to \infty$, despite the fact that $\lim_{t \to \infty} u_N = 0$.
	
	
	
		
\end{itemize}

\paragraph{Step 2: region of \rev{absolute} stability for Forward Euler.}
We now investigate the effects of time discretization of the \node\ propagated to the \Node, in particular, we want to establish under what conditions the numerical solution tends to zero towards infinite time\footnote{Note that the stability results hold also for a matrix $\Lambda$ with positive real part of the eigenvalues \cite{Hairer1996, Lambert1999}, although in this paper we are not really interested in this scenario.}. For simplicity, we analyze the FE scheme. As in Step 1, the analysis proceeds studying separately three types of autoencoder associated to the different types of $\mathcal{M}$:
\begin{itemize}
	\item Linear encoder, linear decoder. We express the decoder in terms of its Jacobian matrix $\Psi(u_n) = J_{\Psi}u_n$. By the linearity of the decoder, the reconstructed solution $J_\Psi u_n$, tends to zero if $u_n$ does. Therefore, it is sufficient to check the stability of the \node. By calling $A = J_{\Psi'}\Lambda J_{\Psi}$, the numerical solution at timestep $k+1$ is:
	\begin{equation*}
		\ut_n^{k+1} = (\mathbb{I} + \Delta t A)^k \ut_{n0},
	\end{equation*}
	which leads to the well-known upper bound on the timestep size: $\max_i|1+\Delta t \lambda_i| < 1$, where $\lambda_i$ is the $i^{th}$ eigenvalue of $A$. 
	
	\rev{
	In the following, we show that the upper bound for the timestep for the \node\ is at most equal to that of the \Node.}
	The n-eigenproblem, $A v = \lambda v$, $v\in\mathbb{R}^n$, can be written as 
	\begin{align*}
		J_{\Psi}Av = J_{\Psi}(J_{\Psi'}\Lambda J_{\Psi}v) = \lambda J_{\Psi}v,
	\end{align*}
	and by calling $w \coloneqq J_{\Psi}v$, $w \in \mathcal{M}$, 
	\begin{equation*}
		(J_{\Psi}J_{\Psi'}) \Lambda w = \lambda w.	
	\end{equation*}
	\rev{We observe that the matrix $P = J_{\Psi}J_{\Psi'}$ is a projection, since $(J_{\Psi}J_{\Psi'})(J_{\Psi}J_{\Psi'}) = J_{\Psi}(J_{\Psi'}J_{\Psi})J_{\Psi'} = J_{\Psi}J_{\Psi'}$.
	In general, a projection $P$ may modify the eigenvalues of $\Lambda$, unless it is an orthogonal projection. Orthogonality here means that $(w-Pw)\perp Pw$, which is equivalent to $w^\top P^\top(P-I)w = 0$.
	It can be checked that the projection induced by the autoencoder,
	$P = J_{\Psi}J_{\Psi'}$, is an orthogonal projection for points on the manifold:
	\begin{align*}
		&u_N^\top P^\top (P-I)u_N = u_N^\top (J_{\Psi}J_{\Psi'})^\top(J_{\Psi}J_{\Psi'}-I)u_N = \\
		&= u_n^\top J_{\Psi}^\top (J_{\Psi}J_{\Psi'})^\top(J_{\Psi}J_{\Psi'}-I)J_{\Psi}u_n = \\
		&=u_n^\top\left(  (J_{\Psi}^\top J_{\Psi'}^\top)\; J_{\Psi}^\top J_{\Psi} \; (J_{\Psi'} J_{\Psi})  - (J_{\Psi}^\top J_{\Psi'}^\top)\; J_{\Psi}^\top J_{\Psi} \right) u_n = \\
		&=u_n^\top\left(  I_{n\times n} \; J_{\Psi}^\top J_{\Psi} \; I_{n\times n}  - \; I_{n\times n} J_{\Psi}^\top  \; J_{\Psi} \right) u_n = 0.
	\end{align*}
	This result shows that the eigenvalues of $A$ form a subset of those of $\Lambda$, which implies that the timestep bound for the \node\ coincides, in the worst case, with that of the \Node.}
	
	\item Linear encoder, nonlinear decoder.
	 By the linearity of the encoder $\lim_{t\to\infty}u_N = 0 \Rightarrow \lim_{t\to\infty}u_n = 0$ and by the exact reduction we have that $\lim_{t\to\infty}\ut_n = 0 \Rightarrow \lim_{t\to\infty}\ut_N = 0$. So we only need to assure that $\lim_{t\to\infty}\ut_n = 0$. Unfortunately, the nonlinear decoder leads to a nonlinear \node: 
	\begin{align*}
		\ut_n^{k+1} = \ut_n^k + \Delta t J_{\Psi'}\Lambda \Psi(\ut_n^k),
	\end{align*}
	which hinders the possibility of assessing the stability by analyzing the eigenvalues of the Jacobian of the right-hand side, $J_{F_n}$ \cite{Lambert1999}, due to the nonlinear nature of the r.h.s. We refer the reader to \Cref{sec:weakly_b_stability} for a more in-depth discussion. Nonetheless, a necessary condition for stability is that small perturbations around the equilibrium point, $u_n = 0$, do not lead to divergence.
	We set $u_n^k = 0$ and we consider a sufficiently small perturbations $\delta u$ form the equilibrium point $u_n^k = 0$. Using a Taylor expansion, we have
	\begin{align*}
		u_n^{k+1} = \delta u_n + \Delta t J_{\Psi'}\Lambda \Psi(\delta u_n) = (\mathbb{I} + \Delta t J_{\Psi'}\Lambda J_{\Psi}(0))\delta u_n + \mathcal{O}(\delta u_n^2),
	\end{align*}
	from which, by setting $J_{\Psi} = J_{\Psi}(0)$, conclusions analogous to those of the previous case can be drawn.

%

	\item Nonlinear encoder, nonlinear decoder. Due to the presence of nonlinearities, $\lim_{t \to \infty} u_N = 0 \Rightarrow \lim_{t \to \infty} u_n = \alpha$, where $\alpha$ satisfies $\Psi(\alpha) = 0$. For simplicity, without loss of generality, we set $\alpha = 0$ (see \Cref{th:equivalence_autoenc}). Hence, we still need to analyze the conditions under which $\ut_n$ tends to zero. However, due to the nonlinear nature of the system, only limited conclusions can be drawn, and we refer the reader to \Cref{sec:weakly_b_stability} for further discussion.
\end{itemize}

\subsection{\rev{Nonlinear} stability}\label{sec:weakly_b_stability}
Because of the nonlinearities intrinsically present in our problem, it becomes relevant to study \rev{the region of nonlinear stability}. The goal here is to consider a dissipative \Node\ and check for which timestep size the dissipative property is satisfied also by the numerical solution.
The dissipative property means that the error between two solutions, $w_N$, $u_N$ decreases monotonically over time: $\partialderlong{\|w_N-u_N\|_2^2}{t}<0$. In what follows, the use of the 2-norm will be necessary. This property is satisfied if the one-sided Lipschitz constant of $F_N$, namely $\nu_{F_N}$, is negative \cite{Lambert1999, Hairer1996, Butcher2016}. Therefore, given $\nu_{F_N} < 0$, our model problem is
\begin{equation*}
	\begin{aligned}
		&\dot u_N = F_N(u_N), \quad t\in(0, \infty), \\
		&u_N(0) = u_{N0}.
	\end{aligned}
\end{equation*}
The satisfaction of \rev{nonlinear stability implies absolute stability} \cite{Hairer1996}. Indeed, if $\lim_{t \to \infty} u_N = 0$, the dissipative property ensures that any perturbed solution also converges to zero. 
Moreover, we notice that, if $\Lambda$ is symmetric, $F_N$ defined in \eqref{eq:model_problem} satisfies the dissipative condition. 
Therefore, with the aforementioned conditions, the results shown in this section are sufficient to meet the requirements outlined in \Cref{sec:a_stability} for symmetric $\Lambda$. 
Let us now find the relation between $\|\ut_n^{k+1} - \wt_n^{k+1}\|_2^2$ and $\|\ut_n^k - \wt_n^k\|_2^2$ obtained with the FE scheme:
\begin{align}\label{eq:b_stab_n_err}
\begin{aligned}
	&\|\ut_n^{k+1} - \wt_n^{k+1}\|_2^2 = \langle \ut_n^k + \Delta t F_n(\ut_n^k)-\wt_n^k - \Delta t F_n(\wt_n^k), \ut_n^k + \Delta t F_n(\ut_n^k)-\wt_n^k - \Delta t F_n(\wt_n^k)\rangle = \\
	&\| \ut_n^k - \wt_n^k\|_2^2 + \Delta t^2\|F_n(\ut_n^k)-F_n(\wt_n^k)\|_2^2 + 2\Delta t\langle F_n(\ut_n^k)-F_n(\wt_n^k), \ut_n^k-\wt_n^k \rangle < \\
	&< \| \ut_n^k - \wt_n^k\|_2^2 + L_{F_n}^2\Delta t^2\| \ut_n^k - \wt_n^k\|_2^2 + 2\Delta t\nu_{F_n}\| \ut_n^k - \wt_n^k\|_2^2 = (1 + L_{F_n}^2\Delta t^2 + 2\Delta t\nu_{F_n})\| \ut_n^k - \wt_n^k\|_2^2, 
\end{aligned}
\end{align}
where $\nu_{F_n}$ is the one-sided Lipschitz constant of $F_n$, assumed to be negative.
For general nonlinear encoders and decoders, the dissipativity property, i.e. $\nu_{F_n} < 0$, is not guaranteed in a straightforward manner, but it can be encouraged, for example, through an appropriate penalization term in the training loss to enforce a negative one-sided Lipschitz constant. \rev{For example, the one-sided Lipschitz constant itself computed on the minibatch, or alternatively the quantity $\exp(\langle F_n(\Psi'(u_{N,1}))-F_n(\Psi'(u_{N,2})), \ \Psi'(u_{N,1})-\Psi'(u_{N,2})\rangle)$, also evaluated selecting $u_{N,1}, u_{N,2}$ on the minibatch, may be used as a penalty term.} In this way, the stability properties of the original system can be preserved in the reduced-order surrogate. Then, from \eqref{eq:b_stab_n_err}, we derive the bound for the timestep size: 
\begin{equation}\label{eq:b_stab_timestep_bound}
1 + L_{F_n}^2\Delta t^2 + 2\Delta t\nu_{F_n} < 1 \quad\Rightarrow \quad 
    \Delta t <  \frac{2|\nu_{F_n}|}{L_{F_n}^2}, 
\end{equation}
which guarantees that the error between any two solutions of the \node\ asymptotically vanishes.
The bound in \eqref{eq:b_stab_timestep_bound} can be interpreted as a sufficient condition to enforce \rev{absolute} stability in the presence of a nonlinear \node.

\section{Approximation of \node\ by neural networks}\label{sec:nn_approx}

%
%

In this section, we consider three neural networks, $N_{\Psi'}$, $N_{\Psi}$, $N_{F_n}$, with trainable weights $\myw_{\Psi'}$, $\myw_{\Psi}$, $\myw_{F_n}$, to approximate, respectively, $\Psi'$, $\Psi$, and $F_n$ and we study the effects of this approximation. $F_n$ depends on $\mu$ and to account for this dependence we provide $\mu$ as input to the neural network, so $N_{F_n} = N_{F_n}(u_n, \mu)$. For the sake of notation, unless it is relevant for the context, we avoid to explicitly write the parameters as argument.

The core idea of the following analysis is to interpret the functions approximated by neural networks as perturbations of their exact counterparts. 
Accordingly, we make the following definition
\begin{definition}[Discrepancies $\kappa$, $\upsilon$, $\omega$] We denote by $\kappa$, $\upsilon$, $\omega$ the discrepancies between the encoder, decoder, right hand side of the \node and their approximations, such that 
\begin{equation}\label{eq:networks}
    N_{\Psi'}(u_N) = \Psi'(u_N) + \kappa(u_N), \
    N_{\Psi}(u_n) = \Psi(u_n)  + \upsilon(u_n), \
    N_{F_n}(u_n) = F_n(u_n)  + \omega(u_n), 
\end{equation}
\end{definition}
The universal approximation theorems state that, within appropriate functional spaces, neural networks can approximate functions with arbitrarily small error. We refer to \cite{Park2020, Cai2022, Li2023, Hanin2017} and the references therein for results concerning the approximation of continuous functions, such as $\Psi'$, $\Psi$, and $F_n$. 



A further observation is in order: since $F_n$ depends on the Jacobian of the encoder (see \eqref{eq:def_F_n}), it is advisable to employ smooth activation functions for $N_{\Psi'}$ \rev{(and not necessarily for $N_{F_n}$ nor $N_{\Psi}$)}, such as the ELU, to ensure differentiability.

According to the aforementioned literature, and considering the discussion in \Cref{sec:reduction}, the approximation errors $\kappa$, $\upsilon$, and $\omega$ can be bounded by constants provided the neural networks are sufficiently expressive and well trained. Specifically, these errors satisfy
$\|\kappa\| \leq \eps_{\Psi'}$,
$\|\upsilon\| \leq \eps_{\Psi}$, and
$\|\omega\| \leq \eps_{F_n}$,
where each $\eps$ depends on the particular function being approximated.
The sources of error can be due to 
\begin{itemize} 
	\item insufficient expressivity of the neural networks and/or inappropriate architecture choice, represented by $\Varepsilon_1$ and $\Varepsilon_2$. However, the approximation theorems in \cite{Park2020, Cai2022, Li2023, Hanin2017} and the discussion in \Cref{sec:autoencoders} ensure that, using a proper architecture, as $|\myw| \to \infty$, this source of error vanishes; 
	\item optimization procedure used to determine $\myw$, which may yield a suboptimal set of parameters due to non-convergence or convergence to a local minimum and/or insufficient training data; 
	\item the use of a time integration scheme with finite timestep size for training the neural networks. 
\end{itemize}
Here, we are not interested in studying the second contribution. Instead, we assume that the optimization process leads to the global optimum and, additionally, that the neural networks are sufficiently large, and that enough samples in $\mathcal{P}$ are available. A brief discussion on the third error source is done in \Cref{sec:train}.
The upper bounds $\eps_{\Psi'}$, $\eps_{\Psi}$, and $\eps_{F_n}$ include all the aforementioned sources of error.

\paragraph{Stability - Propagation of neural network's errors.}\label{sec:stability_exact_nn}
We begin our analysis by proceeding step by step. First, we examine the Lyapunov stability of the exact-in-time problem, and subsequently, we account for the effects of time discretization.
From \eqref{eq:networks} we understand that the analysis resembles that of \Cref{sec:stability_exact}, but here the perturbed solution, $w_n$, derives from the application of neural networks to the \node. We have that the error is bounded by the accuracy of the neural networks (see \Cref{sec:computations} for the computations): 
\begin{equation*}
	\|w_N - u_N\| \leq L_\Psi\Big( \eps_{\Psi'} + \eps_{F_n}t \Big) e^{L_{F_n}t} + \eps_\Psi,
\end{equation*}
showing that the approximation error introduced solely by the neural networks is controlled.

\paragraph{Fully approximated problem.}
We move now to the fully approximated problem, $\mathfrak{P}(\uut)$, where $\uut = (\uut_N, \uut_n)$ and $\uut_N$, $\uut_n$ represent, respectively, the reconstructed and the $n$-solution: 
\begin{align}
	\mathfrak{P}(\uut) =
	\begin{cases}\label{eq:fully_approximated}
	\uut_N^{k+1} = N_\Psi(\uut_n^{k+1}), \\
	\mathscr{L}_n(\uut_n^{k+1},\uut_n^{k},\ldots,\uut_n^{k+1-P}, N_{F_n}, \Delta t) = 0, \quad k=P-1,P,\ldots \\
	\uut_n^0 = N_{\Psi'}(u_N^0).
\end{cases}
\end{align}
The main objective is to ensure that the fully-approximated numerical solution can represent accurately the exact one. We treat the time integration scheme combined with the neural networks as a new numerical method, whose accuracy can be controlled through some sets of hyper-parameters: the timestep size separating two consecutive training data, called \textit{training timestep size}, $\Delta t_{\text{train}}$, the timestep size used during the online phase, $\Delta t$, and the number of trainable weights, $|\myw|$ (\Cref{sec:neural_network}). Therefore, the goal is to ensure that $\|\uut_N - u_N\| \to 0$ as $\Delta t_\text{train}, \Delta t \to 0$ and $|\myw|\to \infty$.
Before continuing, we need to present the training	 procedure in \Cref{sec:train} and afterwards discuss the convergence in \Cref{sec:convergence}.

\subsection{Training}\label{sec:train}
In this section, we outline the methodology for training  the neural networks. The training process is formulated as an optimization problem, in which a loss function is computed and minimized using data. 
The first step involves the generation of the datasets. We sample the parameters from a uniform distribution over the parameter space, and the corresponding solutions, $u_N$, along with any additional required quantities, are computed and stored. The resulting data are partitioned into three subsets: training, validation, and test datasets. The validation and test sets each comprise at least $10\%$ of the total data.
The second step consists of the minimization procedure, using a gradient-based method, for which we consider two scenarios, detailed below.

In the following, we denote by $M_N$ the integration in time scheme for solving the \Node\ and similarly for $M_n$.
Let $\vt_N^k$ be the discrete $N$-solution obtained through $M_N$. The manifold defined by $\{\vt_{N,i}^k\}_{\mu_i \in \mathcal{P}}$ differs from the exact one, $\mathcal{M}$, depending on the accuracy of $M_N$. Note that we add the subscript $i$ in $\vt_{N,i}^k$ to denote the solution for the parameter $\mu_i$. The solution $\vt_N^k$ are computed with a timestep size $\Delta t_N$. Without altering the main results, for the sake of simplicity, we henceforth assume that $\Delta t_\text{train} = \Delta t_N$, and we simply write $\Delta t_\text{train}$. We consider the set $\{\vt_N^k\}$ to constitute the training data, as is commonly done in practical applications.


\subsubsection{Semi data-driven}\label{sec:semi_datadriven}
The first approach consists in jointly training $N_{\Psi'}$ and $N_{\Psi}$, followed by the training of $N_{F_n}$ to approximate $F_n$ by directly minimizing the discrepancy between $N_{F_n}$ and $F_n$. We compute $F_n$ from its definition \eqref{eq:def_F_n}, so it is necessary to store the values of $F_N$ together with the solutions $u_N$ for each timestep size and parameter instance. This procedure is not purely data-driven, as $F_N$ is typically not provided as an output by the solver. Therefore, some understanding of the solver's implementation is required to extract and store $F_N$. Following the aforementioned ideas, the first step is to force the autoencoder to reproduce the identity by minimizing a functional $\mathscr{J}_1$ computed on a minibatch of size $n_{\text{train}}\times N_{\text{time}}$, containing $n_{\text{train}}$ samples in $\mathcal{P}$ and $N_{\text{time}}(\mu)$ timesteps for each parameter sample. Calling $\myw_{\Psi'}$ and $\myw_{\Psi}$ the trainable weights of $N_{\Psi'}$ and $N_\Psi$, respectively, the loss $\mathscr{J}_1$ is defined as 
\begin{align}\label{eq:loss_autoenc}
	\mathscr{J}_1 = \frac{1}{n_{\text{train}} N_{\text{time}}}\sum_{i=1}^{n_{\text{train}}} \sum_{j=1}^{N_{\text{time}}} \mathscr{N}_{ij}^1, \qquad \mathscr{N}_{ij}^1(\myw_{\Psi'}\cup \myw_{\Psi}) = \| \vt_{N,i}^j - N_{\Psi}\circ N_{\Psi'}(\vt_{N,i}^j) \|_2^2. 
\end{align}
The second step is to learn $F_n$ by minimizing $\mathscr{J}_2$:
\begin{equation}\label{eq:loss_Fn_semi}
	\mathscr{J}_2 = \frac{1}{n_{\text{train}} N_{\text{time}}}\sum_{i=1}^{n_{\text{train}}} \sum_{j=1}^{N_{\text{time}}} \mathscr{N}_{ij}^2, \qquad \mathscr{N}_{ij}^2(\myw_{F_n}) =\| J_{N_{\Psi'}}F_{N,i}^j - N_{F_n,i}^j  \|_2^2,
\end{equation}
where $\myw_{F_n}$ are the trainable weights of $N_{F_n}$.

\subsubsection{Fully data-driven}\label{sec:fully_datadriven}
In this second approach, the data consist only in samples of $\vt_N$, not $F_N$. 
The training of $N_{\Psi'}$, $N_{\Psi}$, and $N_{F_n}$ can be made concurrently by minimizing:  
\begin{equation*}
	\mathscr{J}_3 = \frac{1}{n_{\text{train}} N_{\text{time}}}\sum_{i=1}^{n_{\text{train}}} \sum_{j=1}^{N_{\text{time}}}\sum_{r=\{1,3\}} \eta_r\mathscr{N}_{ij}^r,
\end{equation*}
where $\eta_r>0$ are user-defined scalar weights, $\mathscr{N}_{ij}^1$ is the loss for the autoencoder, defined in \eqref{eq:loss_autoenc}, $\mathscr{N}_{ij}^3$ is the loss contribution to approximate $F_n$ by minimizing the residual of \eqref{eq:lmm_diff_operator}, defined as:
\begin{equation}\label{eq:loss_2_fully_datadriven}
	\mathscr{N}_{ij}^3(\myw_{F_n}) = \left\| \sum_{p=0}^P \alpha_p \overline{N}_{\Psi'}(\vt_{N,i}^{j+1-p}) - \Delta t_\text{train} \sum_{p=0}^P \beta_p N_{F_n}(\mu_i, \overline{N}_{\Psi'}(\vt_{N,i}^{j+1-p}))\right\|_2^2,
\end{equation}
where $\overline{N}_{\Psi'}(\cdot)$ is the output of $\Psi'$ for fixed value of the trainable weights. Indeed $\mathscr{N}_{ij}^3$ depends only on the trainable weights of $N_{F_n}$.

It is relevant to note that, thanks to the use of the autoencoder, it is possible to compute the $n$-solution as $\overline{\Psi'}(\vt_{N,i}^k)$ breaking the dependence of $u_{n,i}^k$ on its value at the previous timestep, $u_{n,i}^{k-1}$. Therefore, the temporal loop can be broken, enabling the possibility of a straightforward parallelization of the computation of $\mathscr{N}_{ij}^3$. Doing so, it is possible to fully exploit the computational power of GPUs. Furthermore, by providing data at different times, the risk of overfitting is mitigated.

Moreover, we highlight that the reduced model is trained by minimizing a single loss function, $\mathscr{J}_3$, whose components, $\mathscr{N}_{ij}^1$ and $\mathscr{N}_{ij}^3$, depend exclusively on either $\myw_{\Psi'} \cup \myw_{\Psi}$ or $\myw_{F_n}$. This separation simplifies the potentially intricate and highly nonlinear structure of the loss function.

\subsubsection{Characterization of $F_n$ approximation error.}
We now study how accurate the two training strategies are in approximating $F_n$. 
We assume that $|\myw_{F_n}|\to \infty$ and layers large enough that $F_n$ is arbitrarily expressive. This assumption allows us to introduce an additional setting: we assume that the minimum value of the loss function is attained, i.e., $\mathscr{J}_i = 0$, $i=1,2,3$. 
Our goal is to identify necessary condition under which the minimum value $\mathscr{J}_i$ implies $\max_{k=0,1,\ldots} \|N_{F_n}(t_k) - F_n(t_k)\| \to 0$.

Regarding the semi data-driven methods, the minimization of \eqref{eq:loss_Fn_semi} implies enforcing a match between $N_{F_n}$ and $J_{N_{\Psi'}} F_N$. It follows that a necessary condition for $\max_{k=0,1,\ldots}\|N_{F_n}(t_k) - F_n(t_k)\| \to 0$ is that $\|J_{N_{\Psi'}}(t_k)-J_{\Psi'}(t_k)\|_\infty \to 0$, therefore, an accurate representation of the encoder and its Jacobian is necessary.

Fully data-driven methods should be treated with care. We first notice that, by the definition of the local truncation error, the following equation holds:
\begin{equation}\label{eq:diff_op_tau}
		\left\|\sum_{p=0}^P \alpha_p \Psi'(u_N(t_{j+1-p})) - \Delta t_\text{train}\sum_{p=0}^P \beta_p F_n(u_N(t_{j+1-p})) - \Delta t_\text{train}\tau_n^j  \right\|_2^2 = 0.
\end{equation}
We call $\zeta_j$ the difference between the discrete and the exact  $N$-solutions: $\zeta_j = \vt_N^j-u_N(t_j)$. Assuming a convergent $M_N$, we have that $\zeta_j \to 0$ for $\Delta t_\text{train} \to 0$, $j=0,1,\ldots$.
We introduce now the neural network for the encoder, $N_{\Psi'} = \Psi' + \kappa$. We remind that, assuming enough data, accurate training, and proper \rev{choice} of the architecture, by \Cref{th:uap} and \Cref{th:reduction}, the error $\kappa$ can be made arbitrarily small. By the last considerations, \eqref{eq:loss_2_fully_datadriven} can be rewritten as
\begin{align}\label{eq:what_nn_learns_1}
\begin{aligned}			
	\mathscr{N}_{ij}^3 = &\Bigg\| \sum_{p=0}^P \alpha_p\left( \Psi'(u_{N}(t_{j+1-p})+\zeta_{j+1-p}) + \kappa(u_{N}(t_{j+1-p})+\zeta_{j+1-p})\right) + \\
	&- \Delta t_\text{train} \sum_{p=0}^P \beta_p N_{F_n}(\Psi'(u_{N}(t_{j+1-p})+\zeta_{j+1-p}), \mu_i)\Bigg\|_2^2,	
\end{aligned}
\end{align}
Given the assumption $\mathscr{J}_i=0$, we set $\mathscr{N}_{ij}^3
= 0$. Then, setting \eqref{eq:what_nn_learns_1} to zero and comparing it to \eqref{eq:diff_op_tau}, we conclude that
\begin{proposition}\label{th:fully_works} 
	The fully data-driven is a convergent method for training $N_{F_n}$ in the sense that, under proper conditions, $\max_{k=0,1,\ldots}\|N_{F_n}(t_k) - F_n(t_k)\| \to 0$.
	A necessary condition to meet the requirement $\max_{k=0,1,\ldots}\|N_{F_n}(t_k) - F_n(t_k)\| \to 0$ is that $\tau_n^k, \zeta_k \to 0$ and $\kappa \to 0$. The former can be achieved with $\Delta t_\text{train} \to 0$, the latter can be achieved with $|\myw_{\Psi'} \cup \myw_{\Psi}| \to \infty$, or if $\mathcal{M}$ is linear so that a linear autoencoder, thus a finite number of trainable weights, is sufficient for having $\kappa = 0$.
\end{proposition}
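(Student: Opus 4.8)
The plan is to read the vanishing of the loss contribution $\mathscr{N}_{ij}^3$ as a pointwise identity along the discrete trajectory and to compare it, term by term, with the exact local-truncation-error identity \eqref{eq:diff_op_tau}. Subtracting the two isolates the discrepancy $N_{F_n}-F_n$ evaluated at the data points, while the leftover terms are exactly the contributions of the discretization error $\zeta_k$, the encoder approximation error $\kappa$, and the truncation error $\tau_n^k$; sufficiency and necessity of the three stated conditions can then be read off directly.

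Concretely, I would set $\mathscr{N}_{ij}^3=0$ in \eqref{eq:what_nn_learns_1} (licit since $\mathscr{J}_3=0$), drop the norm, and subtract \eqref{eq:diff_op_tau}. Writing $u_n(t_k)=\Psi'(u_N(t_k))$ and rearranging, the surviving relation has the schematic form
\begin{equation*}
\Delta t_\text{train}\sum_{p=0}^P \beta_p \big(N_{F_n}(u_n(t_{j+1-p}))-F_n(u_n(t_{j+1-p}))\big) = R_\zeta + R_\kappa + \Delta t_\text{train}\,\tau_n^j,
\end{equation*}
where $R_\zeta \coloneqq \sum_p \alpha_p\big(\Psi'(u_N+\zeta)-\Psi'(u_N)\big)$ gathers the perturbation of the encoder argument and $R_\kappa \coloneqq \sum_p \alpha_p\,\kappa(u_N+\zeta)$ gathers the encoder network error; the extra dependence of $N_{F_n}$ on the perturbed argument $\Psi'(u_N+\zeta)$ rather than $u_n$ produces a further $O(\|\zeta\|)$ term that I would absorb into $R_\zeta$ using the Lipschitz continuity of $N_{F_n}$. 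Dividing by $\Delta t_\text{train}$ expresses the $\beta$-weighted error of $N_{F_n}$ as $\tau_n^j + (R_\zeta+R_\kappa)/\Delta t_\text{train}$. For sufficiency I would bound $\|R_\zeta\|\leq L_{\Psi'}\sum_p|\alpha_p|\,\|\zeta_{j+1-p}\|$ via \Cref{th:L_{F_n}} and $\|R_\kappa\|\leq \eps_{\Psi'}\sum_p|\alpha_p|$ via $\|\kappa\|\leq\eps_{\Psi'}$; when all three errors vanish at the appropriate relative rate the weighted discrepancy vanishes, and for a one-step scheme such as FE the left-hand sum collapses to the single term $N_{F_n}(u_n(t_j))-F_n(u_n(t_j))$, giving pointwise convergence and hence $\max_k\|N_{F_n}(t_k)-F_n(t_k)\|\to0$. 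The individual limits then follow as stated: $\zeta_k\to0$ from convergence of $M_N$ as $\Delta t_\text{train}\to0$, $\tau_n^k\to0$ from consistency (cf. \eqref{eq:lte}) as $\Delta t_\text{train}\to0$, and $\kappa\to0$ either from $|\myw_{\Psi'}\cup\myw_\Psi|\to\infty$ via \Cref{th:uap} and \Cref{th:reduction}, or exactly for linear $\mathcal{M}$, where a linear autoencoder achieves $\kappa=0$ with finitely many weights.

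For necessity I would note that $\tau_n^j$, $R_\zeta$, and $R_\kappa$ enter the isolated identity additively and arise from genuinely independent mechanisms — time discretization of the \Node, time discretization of the \node, and finite expressivity of the encoder — so a failure of any one of them to vanish leaves a residual that $N_{F_n}-F_n$ must compensate, preventing convergence at the data points. The main obstacle I anticipate is the $1/\Delta t_\text{train}$ amplification of $R_\zeta$ and $R_\kappa$ after the division: controlling it forces $\zeta$ and $\kappa$ to decay faster than $\Delta t_\text{train}$, i.e. network refinement must be coupled to time refinement, which is precisely what ``under proper conditions'' encodes. Turning this coupling into a quantitative rate, and passing from the single-term collapse of FE to the genuinely weighted combination of pointwise errors arising for a general $P$-step method, are the two places where the argument needs the most care.
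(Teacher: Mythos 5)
Your proposal follows essentially the paper's own argument: set $\mathscr{N}_{ij}^3=0$ (justified by $\mathscr{J}_3=0$), compare the resulting identity with the local-truncation-error identity \eqref{eq:diff_op_tau}, and read off that the discrepancy $N_{F_n}-F_n$ at the data points is controlled exactly by $\tau_n^k$, $\zeta_k$, and $\kappa$, with the three limits obtained from convergence of $M_N$, consistency, and universal approximation (or exactness for linear $\mathcal{M}$), respectively. Your execution is in fact more careful than the paper's one-line conclusion — in particular, isolating $R_\zeta$ and $R_\kappa$ and noting that the division by $\Delta t_\text{train}$ forces $\zeta$ and $\kappa$ to decay faster than $\Delta t_\text{train}$ makes precise the coupling that the paper leaves implicit in the phrase ``under proper conditions.''
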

From a practical point of view, equation~\eqref{eq:what_nn_learns_1} states that $N_{F_n}$, through the optimization process described in \Cref{sec:fully_datadriven}, approximates the correct right-hand side, up to an error introduced by the discretization of the \Node\ (due to $\zeta$), an error arising from the enforcement of the reference $n$-solution, obtained via compression of the full-order one, to fit the $M_n$ scheme (due to $\tau_n$), and an additional error due to the approximation of the encoder (due to $\kappa$). 

Further details can be found in \cite{Du2022}, where it is shown that, assuming exact data, which in our framework corresponds to the availability of $u_n$, the approximation error resulting from the minimization of the residual is bounded by $C_2(\Delta t_\text{train}^P + e_{\mathcal{A}})$, were $C_2$ is a constant depending on $M_n$, $P$ is the order of convergence of $M_n$, and $e_{\mathcal{A}}$ denotes the error introduced by the approximation capability of $N_{F_n}$.

In the fully data-driven approach, due to the intrinsic errors introduced by \eqref{eq:what_nn_learns_1}, which leads $N_{F_n}$ to possibly not represents $F_n$ accurately, the evaluation of the trained reduced order model should be performed using the same scheme $M_n$ and the same timestep size employed during training, $\Delta t = \Delta t_\text{train}$. On the other hand, the semi data-driven approach allows the possibility to change both $M_n$ and $\Delta t$ during the online phase, while still yielding reliable solutions. See \Cref{sec:case_2_time_convergence} for numerical evidence.

\subsection{Solution of the fully discrete problem}
We consider the $P$-steps scheme whose difference operator is written in \eqref{eq:lmm_diff_operator} and it is rewritten here with a more convenient form for this section:
\begin{equation*}
	\sum_{p=0}^{P}\alpha_p\uut_n^{k+1-p} =  \Delta t\sum_{p=0}^{P}\beta_p N_{F_n}(\uut_n^{k+1-p}), \quad k = P-1, P, \ldots
\end{equation*}
with the following auxiliary conditions for the first $P-1$ steps, derived from a one-sided finite difference of the same order of convergence of the $P$-steps scheme:
\begin{equation}\label{eq:auxiliary_conditions}
	\sum_{p=0}^{P}\gamma_p\uut_n^{k+p}=\Delta t N_{F_n}(\uut_n^k), \quad k=0,\ldots,P-1.
\end{equation}
To ease the notation, we set $n=1$ although the results are easily extendable to higher dimension. 
According to the time integration scheme, for $K\geq2P - 1$, we define the following matrices:
\begin{align*}
	&A = \begin{bmatrix}
		\alpha_P, \alpha_{P-1}, \ldots, \alpha_0, 0, \ldots, 0\\
		0,\alpha_P, \alpha_{P-1}, \ldots, \alpha_0, 0, \ldots, 0 \\
		\vdots \\
		0, \ldots, 0, \alpha_P, \alpha_{P-1}, \ldots, \alpha_0
	\end{bmatrix} \in \mathbb{R}^{(K-P+1)\times K}, \
	&&B = \begin{bmatrix}
		\beta_P, \beta_{P-1}, \ldots, \beta_0, 0, \ldots, 0\\
		0,\beta_P, \beta_{P-1}, \ldots, \beta_0, 0, \ldots, 0 \\
		\vdots \\
		0, \ldots, 0, \beta_P, \beta_{P-1}, \ldots, \beta_0
	\end{bmatrix} \in \mathbb{R}^{(K-P+1)\times K},\\
	&\Gamma = \begin{bmatrix}
		\gamma_0, \ldots, \gamma_{P-1},  \gamma_P, 0, \ldots, 0\\
		0,\gamma_0, \ldots, \gamma_{P-1},  \gamma_P, 0, \ldots, 0 \\
		\vdots \\
		0, \ldots, 0, \gamma_0, \ldots, \gamma_{P-1}, \gamma_P
	\end{bmatrix} \in \mathbb{R}^{(P-1)\times K}, \
	&&C = \begin{bmatrix}
		1, 0, \ldots, 0\\
		0, 1, 0, \ldots, 0 \\
		\vdots \\
		0, \ldots, 0,1,0,\ldots,0        
	\end{bmatrix}\in \mathbb{R}^{(P-1)\times K},
\end{align*}
and the following vector whose elements are the solution at each timesteps: $\m \uut_n = [\uut_n^0, \uut_n^1, \ldots, \uut_n^K]^T$ and similarly for $\m N_{F_n}$. Note that, to avoid any confusion, the arrays of solutions at each timestep are denoted with the underline.
The solution in time of the \node\ is obtained from the following nonlinear system:
\begin{equation}\label{eq:sys_lmm}
	\begin{bmatrix}
		\Gamma \\
		A
	\end{bmatrix}
	\m \uut_n = \Delta t 
	\begin{bmatrix}
		C\\
		B
	\end{bmatrix}
	\m N_{F_n}(\m \uut_n). 
\end{equation}

\subsection{Global convergence}\label{sec:convergence}

We now proceed to study the error $\|u_N - \uut_N\|$ between the exact solution and the solution of the fully approximated problem \eqref{eq:fully_approximated}, and the corresponding matrix form with initial conditions included \eqref{eq:sys_lmm}. This result explicitly relates the total error to the neural network approximation error of the reduced dynamics and the decoder reconstruction accuracy, offering a key theoretical guarantee for the effectiveness of the DRE method. \rev{We remark in advance that the results in this section depend on the approximation errors $\eps_{\Psi'}$, $\eps_{\Psi}$, and $\eps_{F_n}$, which are generally unknown since the exact $\Psi'$ and $\Psi$ are not available. It follows that the following discussion is primarily intended as a theoretical result, necessary to establish the convergence property of the proposed method.}
%
\begin{theorem}[Global convergence]\label{th:global_convergence}
\textit{We make the following assumptions: $\Psi'$, $\Psi$, and $F_n$ are continuous functions; the data are generated by approximating the \Node\ using a convergent scheme, $M_N$; the training leads to the global minimum of the loss functions; we use a convergent time integration scheme $M_n$ of order $P$. Then, we have the following bound:
	\begin{equation}\label{eq:final_error}
		\max_{\mu} \|\m u_N - \m \uut_N \|_{\infty} \leq L_{\Psi}(\eps_{\Psi'}(\myw) + \eps_{F_n}(\Delta t_\text{train},\myw)T) e^{L_{F_n} T} + L_{\Psi}C_{1}\Delta t^P + \eps_{\Psi}(\myw).
	\end{equation}
	So, for $|\myw|\to\infty$ and $\Delta t, \Delta t_{\text{train}} \to 0$, we have $\max_{\mu} \|\m u_N - \m \uut_N \|_{\infty} \to 0$.}
\end{theorem}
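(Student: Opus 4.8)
The plan is to split the total error by the triangle inequality, inserting the \emph{exact-in-time} solution of the neural-network-approximated reduced dynamics as an intermediate object. Concretely, I would introduce $w_n(t)$ solving $\dot w_n = N_{F_n}(w_n)$ with $w_n(0)=N_{\Psi'}(u_{N0})$, together with its reconstruction $w_N = N_\Psi(w_n)$. This auxiliary trajectory cleanly separates the two distinct error mechanisms: the replacement of the exact maps $\Psi'$, $\Psi$, $F_n$ by their neural surrogates (a continuous-in-time phenomenon) and the subsequent time discretization of the reduced ODE. Writing
\[
\|u_N - \uut_N\| \leq \|u_N - w_N\| + \|w_N - \uut_N\|,
\]
the first summand carries the network-approximation error and the second the discretization error, and I would bound them separately.

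For the first term I would invoke the Lyapunov-type estimate already established in \Cref{sec:stability_exact_nn}, where the networks are treated as perturbations $\kappa$, $\upsilon$, $\omega$ of the exact maps. That analysis gives directly
\[
\|u_N - w_N\| \leq L_\Psi\big(\eps_{\Psi'} + \eps_{F_n}\,t\big)e^{L_{F_n}t} + \eps_\Psi,
\]
so taking the supremum over $t\in[0,T]$ contributes the first and last terms of the claimed bound, with $t$ replaced by $T$. For the second term, since $w_N$ and $\uut_N$ share the same decoder network, its Lipschitz continuity yields $\|w_N - \uut_N\| = \|N_\Psi(w_n) - N_\Psi(\uut_n)\| \leq L_\Psi\|w_n - \uut_n\|$. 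Now $\|w_n - \uut_n\|$ is a purely classical object: the global error of the convergent scheme $M_n$ of order $P$ applied to the ODE with right-hand side $N_{F_n}$, started from the same initial condition $N_{\Psi'}(u_{N0})$. A standard consistency-plus-zero-stability (discrete Gr\"onwall) argument, exactly as in \Cref{sec:time_discretization} and including the order-$P$ startup conditions \eqref{eq:auxiliary_conditions}, bounds this by $C_1\Delta t^P$, with $C_1$ absorbing the Gr\"onwall factor $e^{L_{F_n}T}$ and the relevant derivative bounds of $N_{F_n}$. This produces $L_\Psi C_1 \Delta t^P$. Combining the three contributions and taking the maximum over $\mu$ (which amounts to replacing the $\mu$-dependent constant $L_{F_n,\mu}$ by its uniform bound $L_{F_n}$ from \Cref{th:L_{F_n}}) gives the stated inequality, after noting that the discrete infinity norm over timesteps is dominated by the continuous supremum over $[0,T]$.

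The main obstacle is not the triangle-inequality bookkeeping but the precise meaning of $\eps_{F_n}(\Delta t_\text{train},\myw)$ and its vanishing. The subtlety is that $N_{F_n}$ is never trained against $F_n$ directly: in the fully data-driven strategy it is fit through the discrete residual \eqref{eq:loss_2_fully_datadriven}, so its accuracy is governed by \Cref{th:fully_works}, i.e.\ by the local truncation error $\tau_n$, the data error $\zeta$, and the encoder error $\kappa$. I would therefore close the argument by citing \Cref{th:fully_works} to justify $\eps_{F_n}\to 0$ as $\Delta t_\text{train}\to 0$ and $|\myw|\to\infty$, together with the universal approximation results and \Cref{th:reduction} to justify $\eps_{\Psi'},\eps_\Psi\to 0$ as $|\myw|\to\infty$; with $\Delta t\to 0$ these force the whole right-hand side to zero. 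A secondary technical point worth verifying is that $L_\Psi$, $L_{F_n}$, and $C_1$ remain uniformly bounded as $|\myw|\to\infty$, which is exactly what the architectural prescription of \Cref{th:reduction} is meant to guarantee, preventing the surrogate maps from developing blowing-up Lipschitz or derivative norms.
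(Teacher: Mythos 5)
Your proposal follows essentially the same route as the paper's proof: the same auxiliary trajectory $w_n$ solving $\dot w_n = N_{F_n}(w_n)$, $w_n(0)=N_{\Psi'}(u_{N0})$, the same splitting into a network-perturbation term handled by the Gr\"onwall/Lyapunov estimate of the stability analysis and a classical order-$P$ discretization term for the scheme $M_n$, and the same appeal to \Cref{th:fully_works} and the approximation results to send $\eps_{F_n}$, $\eps_{\Psi'}$, $\eps_{\Psi}$ to zero. The only cosmetic difference is that you perform the triangle-inequality split at the level of the $N$-solutions and use the Lipschitz constant of $N_\Psi$ directly, whereas the paper splits on the reduced solutions and lifts at the end through the exact decoder, writing $\uut_N = \Psi(\uut_n)+\upsilon(\uut_n)$ so that only the exact constant $L_\Psi$ and a single $\eps_\Psi$ term appear, thereby sidestepping the implicit identification of $L_{N_\Psi}$ with $L_\Psi$ that your version makes.
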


\begin{proof}	
	The error between the fully approximated solution and the exact one can be decomposed as
	\begin{equation}\label{eq:fully_err_2}
	    \|\m u_n - \m \uut_n\|_\infty \leq \underbrace{\|\m u_n - \m w_n\|_{\infty}}_{H} + \underbrace{\|\m w_n - \m \uut_n\|_{\infty}}_{Q},
	\end{equation}
	where $w_n$ is the exact solution of the following \node
	\begin{align}\label{eq:approxnode}
		\begin{aligned}
			&\dot{w}_n = N_{F_n}(w_n), \\
			&w_n(0) = N_{\Psi'}(u_{N0}).	
		\end{aligned}
	\end{align}
	In \eqref{eq:fully_err_2}, $H$ is due to neural network approximation of the right-hand side, and $Q$ is due to the time discretization.
	Using \eqref{eq:networks}, equation \eqref{eq:approxnode} can be written as
	\begin{align}\label{eq:approxnodeerrors}
		\begin{aligned}
			&\dot{w}_n = F_n(w_n) + \omega(w_n), \\
			&w_n(0) = \Psi'(u_{N0}) + \kappa(u_{N0}).	
		\end{aligned}
	\end{align}
	Subtracting \eqref{eq:node_0} to \eqref{eq:approxnodeerrors}, we have
	\begin{align}
		\begin{aligned}
			&\dot{w}_n - \dot{u}_n = F_n(w_n) - F_n(u_n) + \omega(w_n), \\
			&w_n(0) = \kappa(u_{N0}).	
		\end{aligned}
	\end{align} 
	Applying Gr\"onwall's lemma yields
	\begin{equation}\label{eq:bound_H}
	    H = \|\m w_n - \m u_n \|_\infty \leq (\eps_{\Psi'} + \eps_{F_n}T) e^{L_{F_n,\mu} T}.
	\end{equation}
	The term $Q$ in \eqref{eq:fully_err_2} describes the error introduced by the time discretization of \eqref{eq:approxnode}, which depends on the specific time integration scheme used and can generally be bounded by \cite{Hairer1996,Lambert1999, Butcher2016}
	\begin{equation}\label{eq:bound_Q}
	    Q = \|\m w_n - \m\uut_n\|_\infty \leq C_{1,\mu}\Delta t^P,
	\end{equation}
    where $C_{1,\mu} > 0$ generally depends of the specific r.h.s. at hand and time integration scheme.
	Finally, replacing \eqref{eq:bound_H} and \eqref{eq:bound_Q} into \eqref{eq:fully_err_2} we obtain the following bound
	\begin{equation}\label{eq:fully_err_un_2}
	    \| \m u_n - \m \uut_n \|_\infty \leq (\eps_{\Psi'}(\myw) + \eps_{F_n}(\Delta t_\text{train},\myw)T) e^{L_{F_n,\mu} T} + C_{1,\mu}\Delta t^P,
	\end{equation}
	where we have emphasized the dependence of the neural network approximation error on the trainable weights, $\myw$, and training timestep, $\Delta t_\text{train}$.
	
	From \eqref{eq:fully_err_2}, and using \eqref{eq:networks}, we obtain the error on the reconstructed solution for one trajectory: 
	\begin{align*}
		&\|\m u_N - \m \uut_N \|_{\infty} = \| \m \Psi(\m u_n) - \m \Psi(\m \uut_n) - \m \upsilon(\m \uut_n) \| = 
        \|\m \Psi(\m u_n) - \m \Psi(\m \uut_N) -\m \upsilon(\m \uut_n) - \m \Psi(\m w_n) + \m \Psi(\m w_n) \|_{\infty} \leq \\
		&\leq L_{\Psi}\|\m u_n - \m w_n\|_{\infty} + L_{\Psi}\|\m w_n - \m \uut_n\|_{\infty} + \eps_{\Psi} \leq L_{\Psi}(\eps_{\Psi'} + \eps_{F_n}T) e^{L_{F_n,\mu} T} + L_{\Psi}C_{1,\mu}\Delta t^P + \eps_{\Psi}.
	\end{align*}
	Taking the maximum across the trajectories leads to the global error: 
	\begin{equation}\label{eq:final_error}
		\max_{\mu} \|\m u_N - \m \uut_N \|_{\infty} \leq L_{\Psi}(\eps_{\Psi'}(\myw) + \eps_{F_n}(\Delta t_\text{train},\myw)T) e^{L_{F_n} T} + L_{\Psi}C_{1}\Delta t^P + \eps_{\Psi}(\myw),
	\end{equation}
	where $C_1 = \max_{\mu}C_{1,\mu}$.
	Using the fully data-driven training, by \Cref{th:fully_works}, we have that $\eps_{F_n} \to 0$ as $|\myw| \to \infty$ and $\Delta t_{\text{train}} \to 0$. By the assumption of sufficient training data and that a global minimum of the loss is reached, the term $\eps_\Psi \to 0$ as $|\myw| \to \infty$. Finally, the error introduced by the integration in time scheme for the solution of the \node\ $L_{\Psi} C_1 \Delta t^P \to 0$ as $\Delta t \to 0$.
\end{proof}
We observe that the encoder appears in the error constant that multiplies the exponential ($\eps_{\Psi'}$) and in $L_{F_n}$ (see \eqref{eq:L_F_n}). The decoder appears in terms of its Lipschitz constant, in $L_{F_n}$, and as an additional term with $\eps_\Psi$. Therefore, we understand that it is important to ensure that both the encoder and the decoder are accurately approximated.


\begin{remark}Similarity with proper orthogonal decomposition:
the DRE can be interpreted as a nonlinear generalization of Proper Orthogonal Decomposition (POD) \cite{Hesthaven2016, Quarteroni2016, Ohlberger2016}, in which the linear transition matrices obtained via singular value decomposition of the snapshot matrix are replaced by neural networks. Furthermore, our methodology also involves approximating the right-hand side $F_n$ of the \node, in order to further accelerate the overall computational procedure.
\end{remark}

\clearpage

\section{Numerical tests}\label{sec:numerical_studies}
In this section, we present a few test cases to numerically verify the theory developed and show a generic application of DRE. The search of the optimal neural network is not the goal of this paper, so only a preliminary manual optimization of the hyperparameters, such as the number of data, batch size, number of layers, learning rate and its schedule, and type of activation function, is done. 
In all test cases, we use the Adam \cite{Kingma2014} optimizer implemented in PyTorch v2.6 \cite{Pytorch} using the default settings to perform minimization.

\subsection{Test 1: \rev{absolute} stability}\label{sec:case_1_a_stability}
Here we study numerically the stability after reduction (\Cref{sec:a_stability}). To properly isolate this property from other discretization error sources, we do not approximate $F_n$ with $N_{F_n}$ but, instead, we write it as its definition \eqref{eq:def_F_n}. For practical simplicity, $\Psi'$ and $\Psi$ are approximated with $N_{\Psi'}$ and $N_{\Psi}$, respectively. When not linear, these neural networks are made highly expressive to ensure a negligible approximation error for the purpose of this study.
Due to the limited theoretical results available for the general case, we focus here on two specific scenarios: the linear autoencoder and the linear–nonlinear autoencoder.

\subsubsection{Linear encoder -- linear decoder}
The \Node\ taken into account is 
\begin{align}\label{eq:lin_ode_test}
\begin{aligned}
	&\dot u_N = \Lambda u_N,\\
	&u_N(0) = [\mu, 1,1]^T,
\end{aligned}
\end{align}
where $u_N \in \mathbb{R}^3$, $\Lambda \in \mathbb{R}^{3 \times 3}$ is a diagonal matrix with entries $[-3, -5, -5]$ along the diagonal, and $\mu\sim\mathcal{U}(1,5)$ is the parameter. We observe that a linear ODE system, such as \eqref{eq:lin_ode_test}, may arise from the spatial discretization of a PDE representing the heat equation. As mentioned in \Cref{sec:a_stability}, in this scenario we expect an arbitrarily long extrapolation in time without instabilities.

The reduction maps a vector in $\mathbb{R}^3$ to a vector in $\mathbb{R}^2$. A low-dimensional setting is intentionally chosen to ease the graphical visualization of the results. Since $\Lambda$ has real eigenvalues, no oscillatory behavior is observed, and the manifold is linear. Indeed, by calling $x=e^{-3t}$ and $y = x^{5/3}$, we have $[u_{n,1}, u_{n,2}, u_{n,3}]=[\mu e^{-3t}, e^{-5t}, e^{-5t}] = [\mu x, y, y]$, which represents a flat surface. Therefore, a linear autoencoder is sufficient to approximate the identity map $\id_{\mathcal{M}}$. In this case, $\Psi'$ and $\Psi$ are linear and, for practical convenience, they are approximated by $N_{\Psi'}$ and $N_{\Psi}$, that are matrices whose entries are still determined through an optimization process to minimize the loss in \eqref{eq:loss_autoenc}. \rev{The networks' architectures are reported in \Cref{sec:computations}, Tab.~\ref{tab:case_1_architecture}}.\\
\rev{We report below the numerical values, up to the third digit, of the Jacobians of the encoder and decoder, together with the reduced matrix $A = J_{N_{\Psi'}}\Lambda J_{N_\Psi}$ and its eigenvalues:
\begin{align*}
\begin{aligned}
	&J_{N_{\Psi'}} = \begin{bmatrix}
		1.221 &  0.200 &  1.097 \\
		-0.785 & -0.020 &  0.204
	\end{bmatrix}, && J_{N_{\Psi}} =
	\begin{bmatrix}
		0.149 & -1.043 \\
		0.631 &  0.982 \\
		0.631 &  0.982
	\end{bmatrix}, \\
	&A =\begin{bmatrix}
		-4.637 & -2.547 \\
		-0.233 & -3.363
	\end{bmatrix}, && \text{eig}(A) = -5, -3. 
\end{aligned}
\end{align*}
As expected, the numerical values of the eigenvalues of $A$ coincide with those of $\Lambda$ (see \Cref{sec:a_stability}), which will lead to a timestep size bound for the \node\ equal to that of the \Node.
}
The manifold $\mathcal{M}$ is composed of $100$ distinct trajectories, each defined over the time interval $[0, T]$ with $T = 0.5$.
In Fig.~\ref{fig:a_stab_linear_linear}, we present the components of the reconstructed solution, $\Psi(\ut_n)$, as a function of time, where $\ut_n$ is obtained by solving the \node\ using the FE scheme. 
\rev{The \node\ is solved using two different timestep sizes. The first is $0.99\Delta t_{\max} = 0.99 \times \frac{2}{5}$ and is shown in the first row of Fig.~\ref{fig:a_stab_linear_linear}; the second is $0.5\Delta t_{\max} = 0.5 \times \frac{2}{5}$ and is shown in the second row of the same figure.	As expected, for $\Delta t = 0.99\Delta t_{\max}$ the solution exhibits oscillations, although it decays to zero as time tends to infinity. No oscillations are observed for the smaller time-step size.
} 
Additionally, we remark that the end of the training time, $T = 0.5$, is highlighted in blue in Fig.~\ref{fig:a_stab_linear_linear}, while the solution is extrapolated up to $t = 100$.    
\begin{figure*}[h]
	\centering
	\begin{turn}{90}
		\hspace{10mm} {$\Delta t = 0.99\Delta t_{\max}$} 
	\end{turn}
	\hspace*{1mm}
	\includegraphics[width=0.285\textwidth]{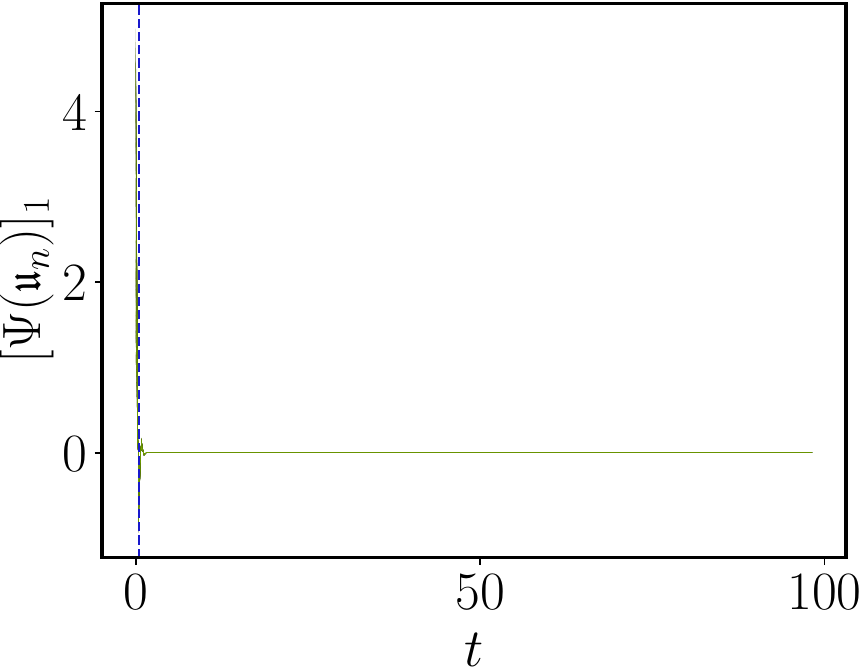}\hspace{0.3cm}
	\includegraphics[width=0.31\textwidth]{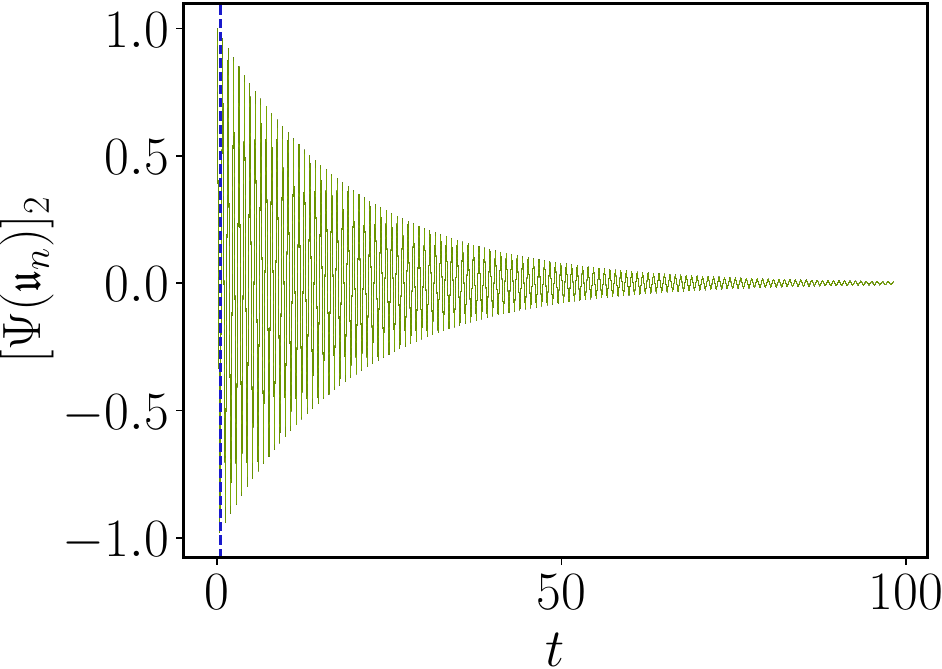}\hspace{0.3cm}
	\includegraphics[width=0.31\textwidth]{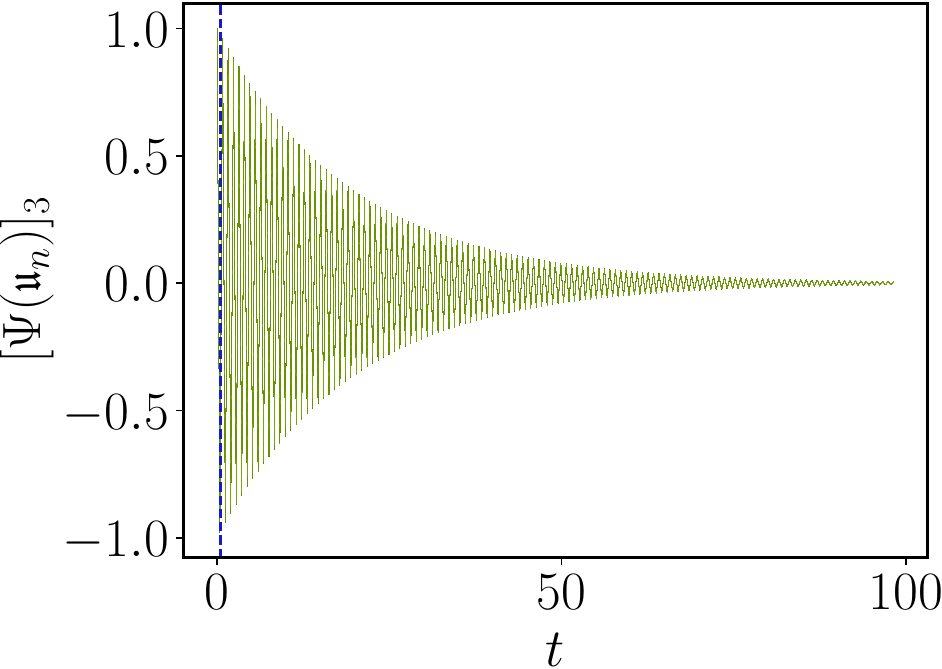} \\
	\vspace*{4mm}
	\begin{turn}{90}
		\hspace{10mm} {$\Delta t = 0.5\Delta t_{\max}$} 
	\end{turn}
	\hspace*{1mm}
	\includegraphics[width=0.285\textwidth]{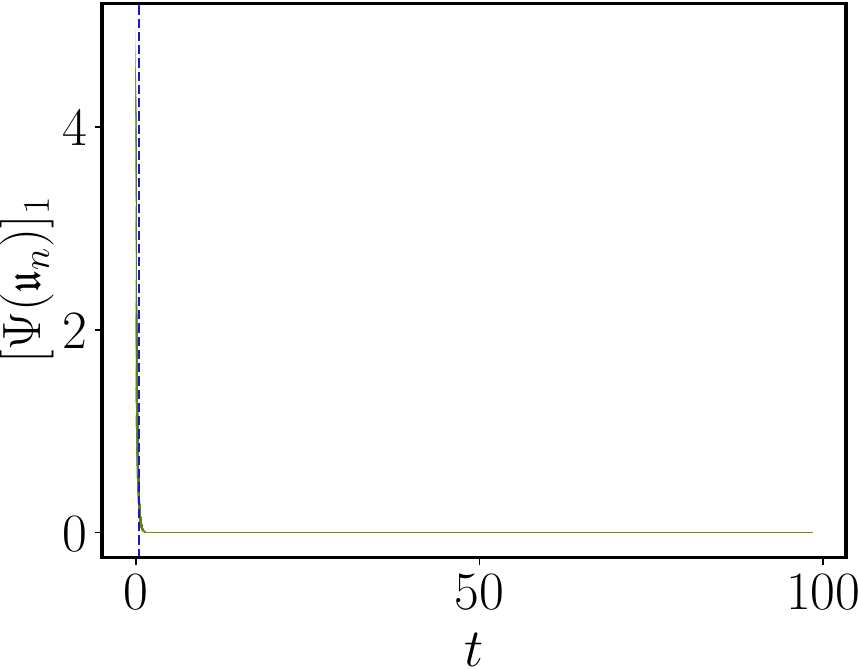}\hspace{0.3cm}
	\includegraphics[width=0.31\textwidth]{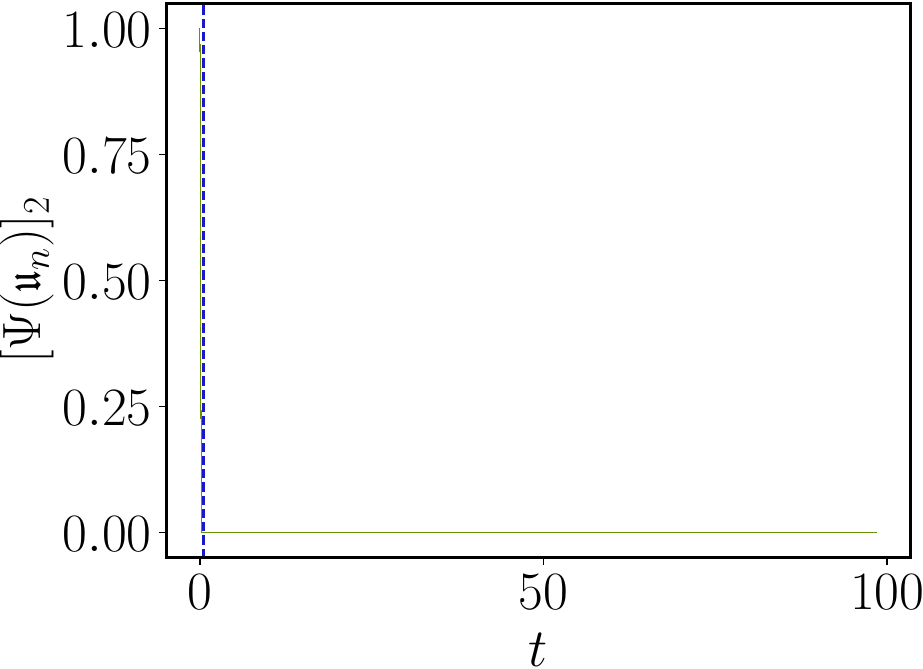}\hspace{0.3cm}
	\includegraphics[width=0.31\textwidth]{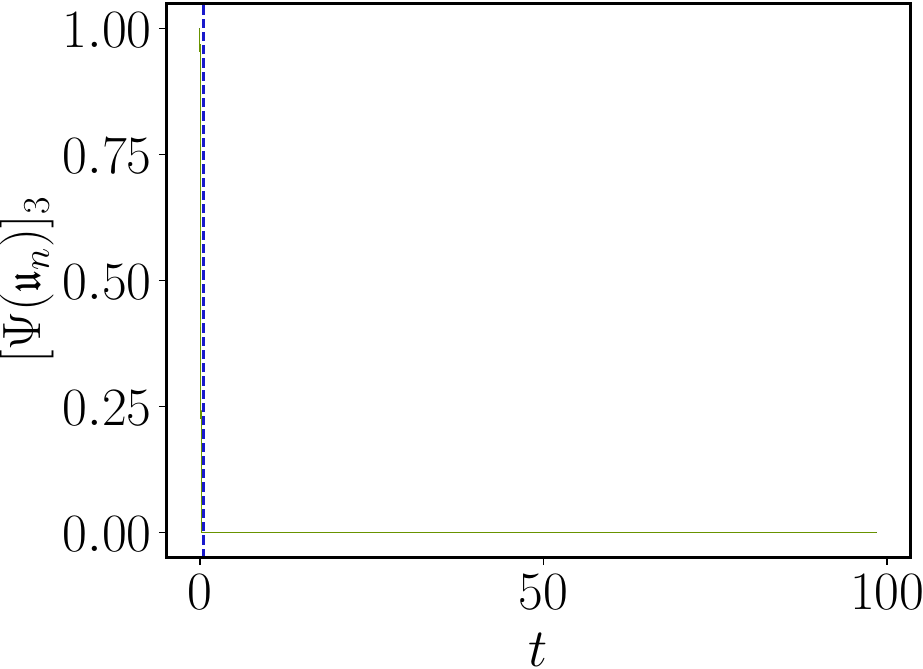}
	\caption{\rev{Linear case. Three components, one for each panel, of the reconstructed solutions in the test dataset.
	First row: the numerical solutions are computed using FE with $\Delta t = 0.99 \Delta t_{\max}$. Second row: The timestep size is reduced to $\Delta t = 0.5 \Delta t_{\max}$, at which the oscillations are absent. 
	The vertical blue-dashed line is located at the end of the training time.}}
	\label{fig:a_stab_linear_linear}
\end{figure*}

\subsubsection{Linear encoder -- nonlinear decoder}

We make here a parametric study with $N\in\{3,100,500\}$ to ensure the consistency of the results for different sizes of the \Node. For the sake of clarity, we describe below in detail only the case $N=3$; analogous settings are used for the remaining two test cases. \rev{For each case $n=2$}.

The \Node\ taken into account is that in \eqref{eq:lin_ode_test} \rev{with same parameter and initial condition; the only difference is the matrix $\Lambda$ which,} up to the third digit, is equal to 
\begin{equation*}
    \Lambda = \begin{bmatrix}
    	-4.75 & -1.55 & -0.79 \\
    	-1.55 & -4.31 & -1.02 \\
    	-0.79 & -1.02 & -5.2
    \end{bmatrix},
\end{equation*}
with eigenvalues $\lambda_1 = -7.0$, $\lambda_2=-2.93$,  $\lambda_3=-4.34$. In this test case, \rev{despite the only change compared to \eqref{eq:lin_ode_test} being $\Lambda$,} a nonlinear decoder is necessary since the manifold is not flat, see Fig.~\ref{fig:a_stab_manifold}. \rev{The networks' architectures are reported in \Cref{sec:computations}, Tab.~\ref{tab:case_1_architecture}.}
The training is accomplished on a dataset of $100$ samples and the results computed here are computed on the test dataset made of $50$ samples, those represented in Fig.~\ref{fig:a_stab_manifold}. The training is interrupted after 50 epochs when the loss reaches a low value of about $10^{-4}$. After the training, $\nu_{F_n}$ and $L_{F_n}$ are computed numerically on the test dataset, following the procedure presented in \cite{DAmbrosio2021}. Up to the third digit, \rev{they result} to be $\nu_{F_n} = -3.14$, which is similar to that of $F_N$ ($\nu_{F_N} = \lambda_2 = -2.93$), and $L_{F_n} = 10.4$, from which follows the bound on the timestep size for FE: $\Delta t \leq \rev{\Delta t_{\max} = } \frac{-2\nu_{F_n}}{L_{F_n}} = 0.058$, which is about five time smaller to that to assure the stability of the \Node\ if solved directly, $\Delta t \leq \frac{2}{\max_i|\lambda_i|} \approx 2/7.0 \approx 0.28$ and it is about the half to that for assuring the dissipative behavior on the numerical solution of the \Node\ $\Delta t \leq \frac{-2\nu_{F_N}}{L_{F_N}}\approx \frac{-2(-2.93)}{7.0^2}\approx 0.120$.
In the first row of Fig.\ref{fig:a_stab_linear_nonlinear}, we report the components of the reconstructed solution, $\Psi(\ut_n)$, as a function of time, where $\ut_n$ is obtained by solving the \node\ using the FE scheme. The time integration is performed with a timestep size of $0.99\Delta t_{\max}$. Despite the approximations introduced by time discretization and the neural network models, and despite the training on partial trajectories truncated at early times, the solution stabilizes at the equilibrium value close to zero. The training interval, $T = 0.5$, is highlighted in \rev{blue} in \rev{Fig.\ref{fig:a_stab_linear_nonlinear}}. We emphasize that, despite the training on a subset of $\mathcal{M}$, due to the simplicity of the problem at hand, it was possible to successfully extrapolate up to grater times, e.g. $t = 10$ in Fig.~\ref{fig:a_stab_linear_nonlinear}, without any instability. Similar results are obtained for $N=100$ and $N=500$, as depicted in Fig.~\ref{fig:a_stab_linear_nonlinear}.
\begin{figure*}
	\centering
	\subfloat[view 1]{\includegraphics[width=0.35\textwidth]{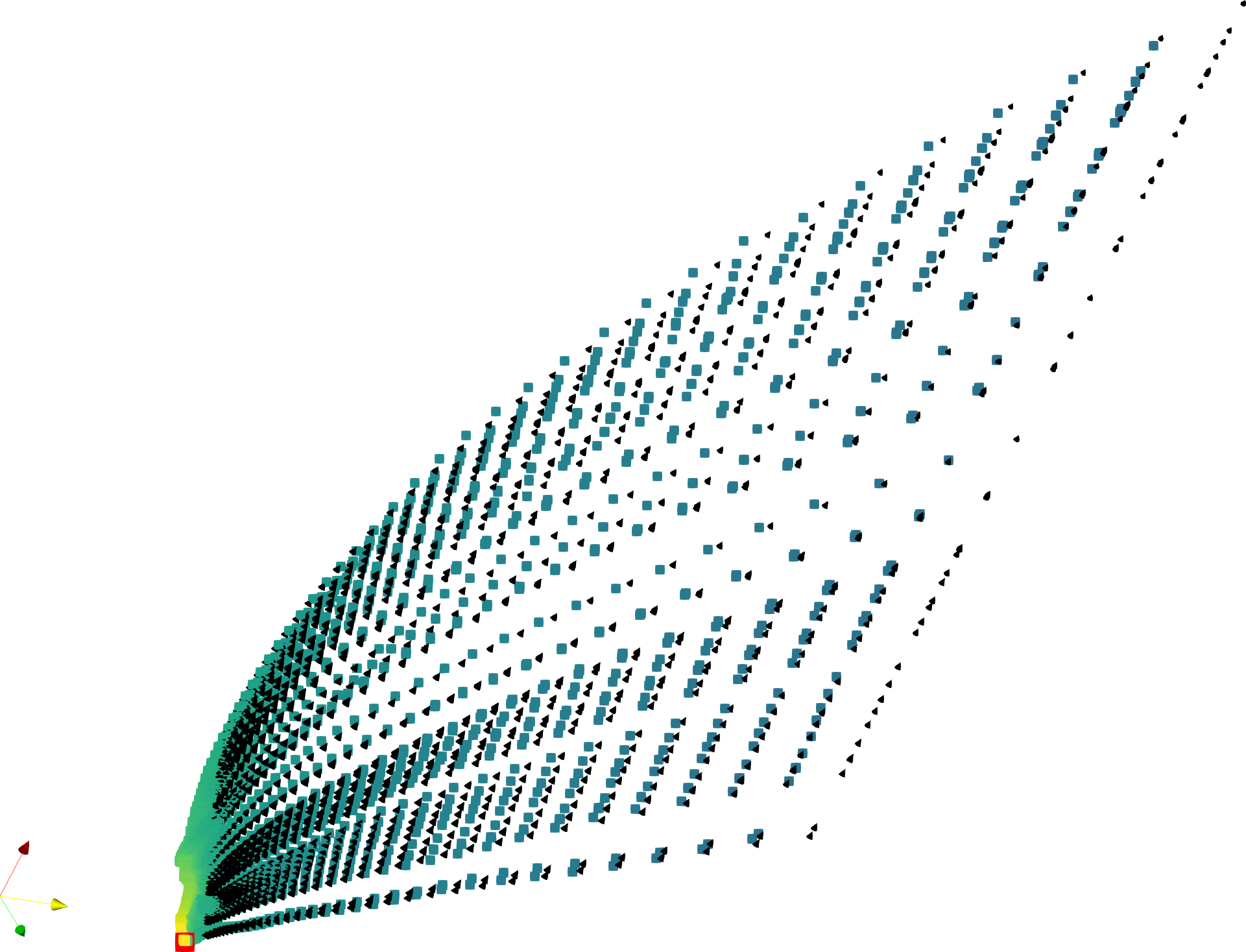}\hspace{0.3cm}}
	\subfloat[view 2]{\includegraphics[width=0.4\textwidth]{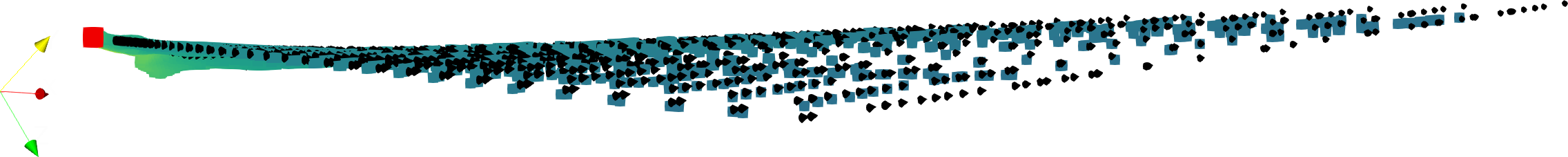}\hspace{0.3cm}}
	\caption{Two perspectives, \textbf{(a)} and \textbf{(b)}, of the curved manifold $\mathcal{M}$. Black cones indicate the data used to train the autoencoder, while the colored points represent the reconstructed $\Psi(\ut_n)$. The color scale (from blue to yellow) is related to the time, solely for the purpose ease the readability. It is worth noting that, in this test case, despite the extrapolation in time, the reconstructed solution still converges to a stable value near the origin (red point). The axes have been translated away from the origin (red point) to improve readability.}
	\label{fig:a_stab_manifold}
\end{figure*}
\begin{figure*}[h]
	\centering
        \vspace{5mm}
        \begin{minipage}{1\textwidth}
            \begin{turn}{90}
    			\hspace{-0.2cm} {\large\textbf{$N = 3$}}
            \end{turn}
            \hspace{0.2cm}
            \begin{minipage}{0.9\textwidth}
        	\includegraphics[width=0.29\textwidth]{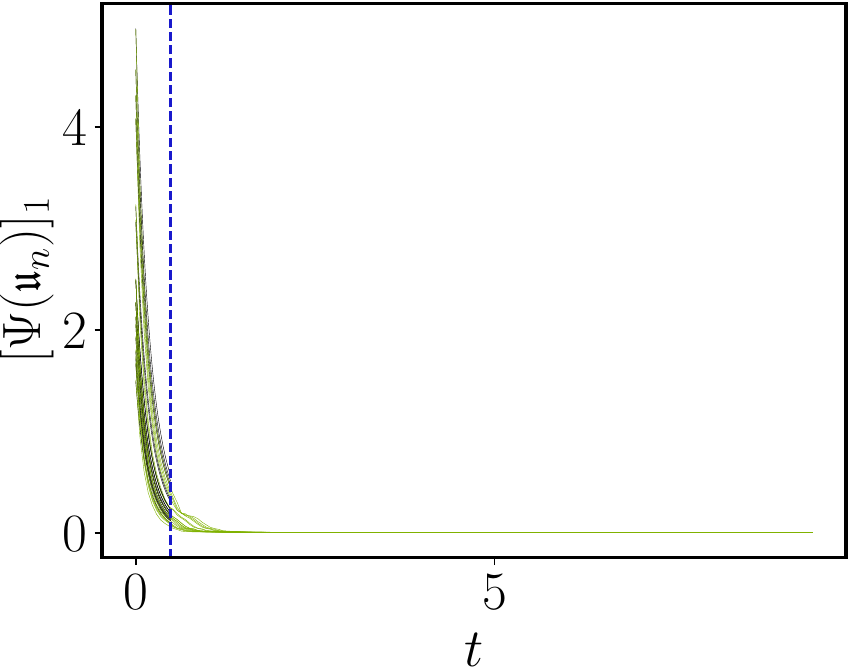}\hspace{0.3cm}
        	\includegraphics[width=0.29\textwidth]{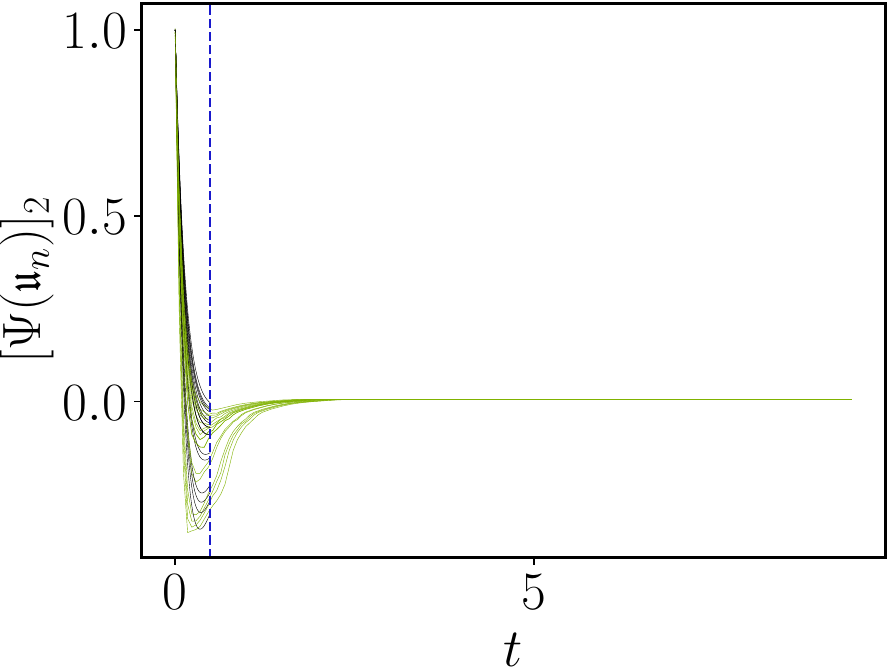}\hspace{0.3cm}
        	\includegraphics[width=0.29\textwidth]{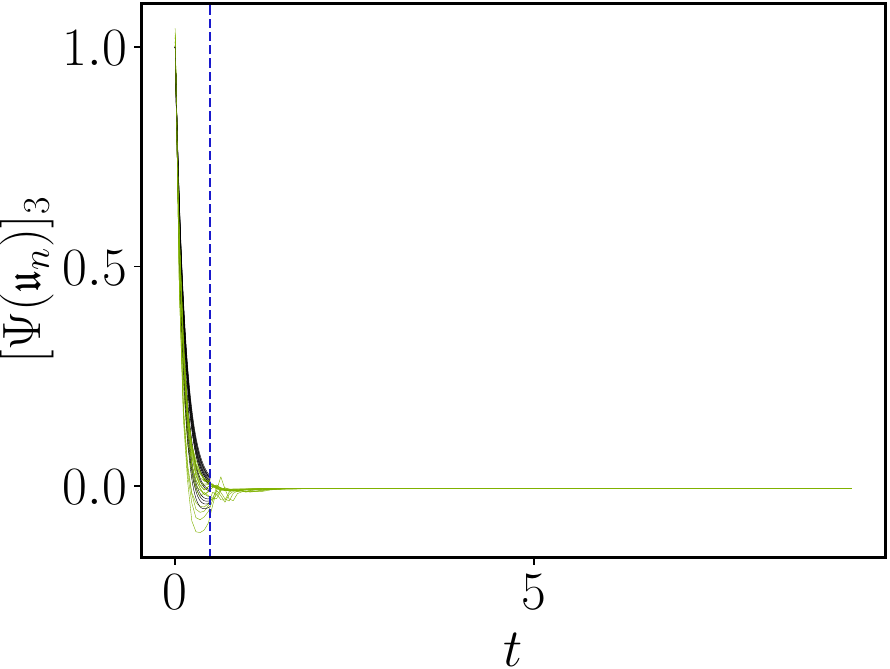}\hspace{0.3cm}
            \end{minipage}
        \end{minipage}

        \vspace{1cm}
        \begin{minipage}{1\textwidth}
            \begin{turn}{90}
                \hspace{-0.8cm} {\large\textbf{$N = 100$}}
            \end{turn}
            \hspace{0.2cm} 
            \begin{minipage}{0.9\textwidth} 
                \includegraphics[width=0.29\textwidth]{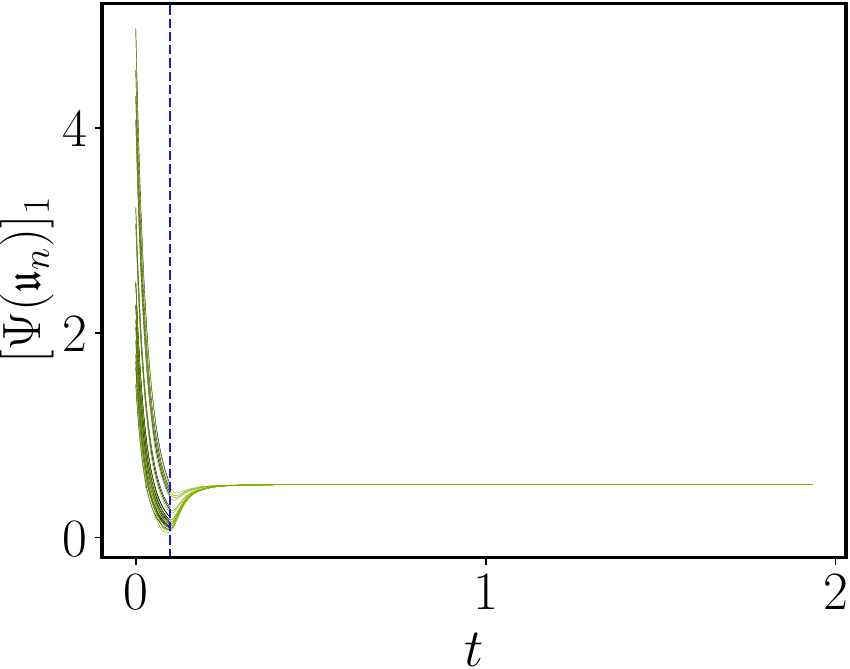}\hspace{0.3cm}
                \includegraphics[width=0.29\textwidth]{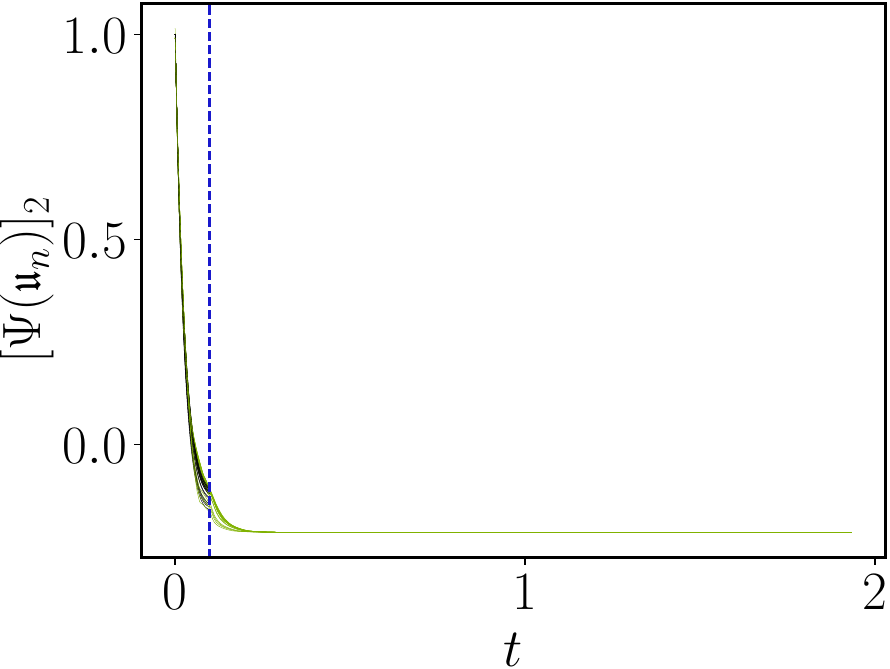}\hspace{0.3cm}
                \includegraphics[width=0.29\textwidth]{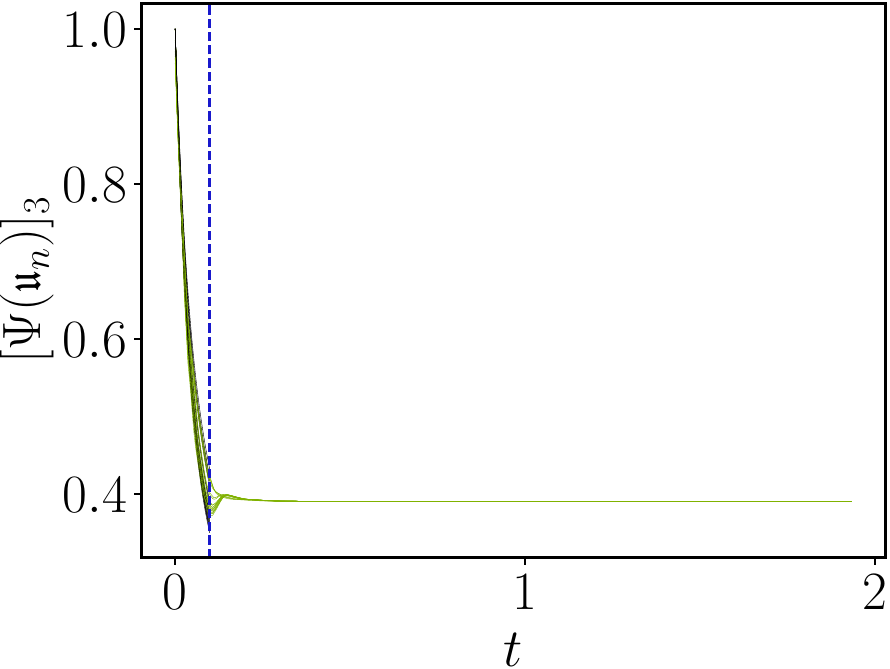}\\[0.3cm] 
                \includegraphics[width=0.29\textwidth]{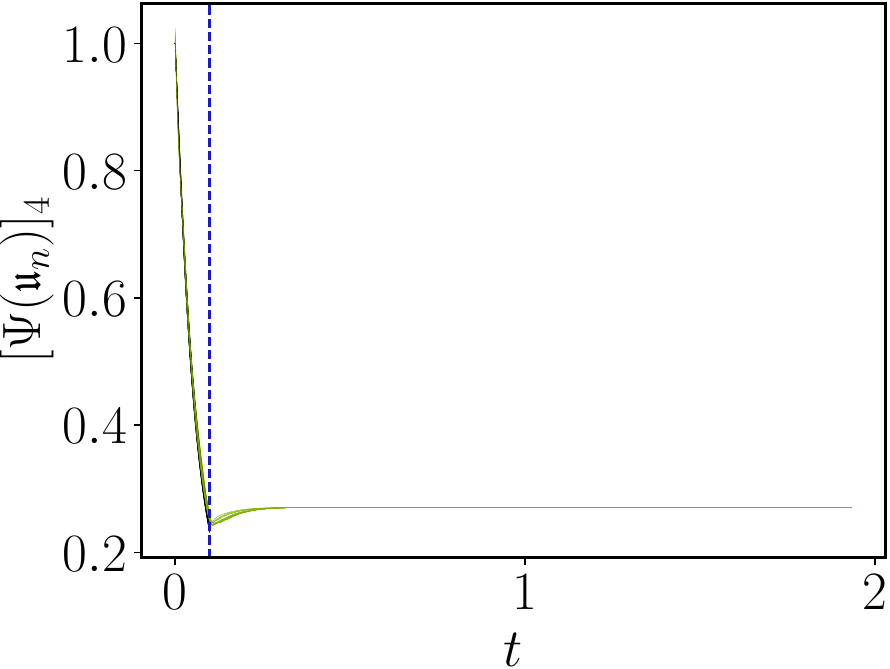}\hspace{0.3cm}
                \includegraphics[width=0.29\textwidth]{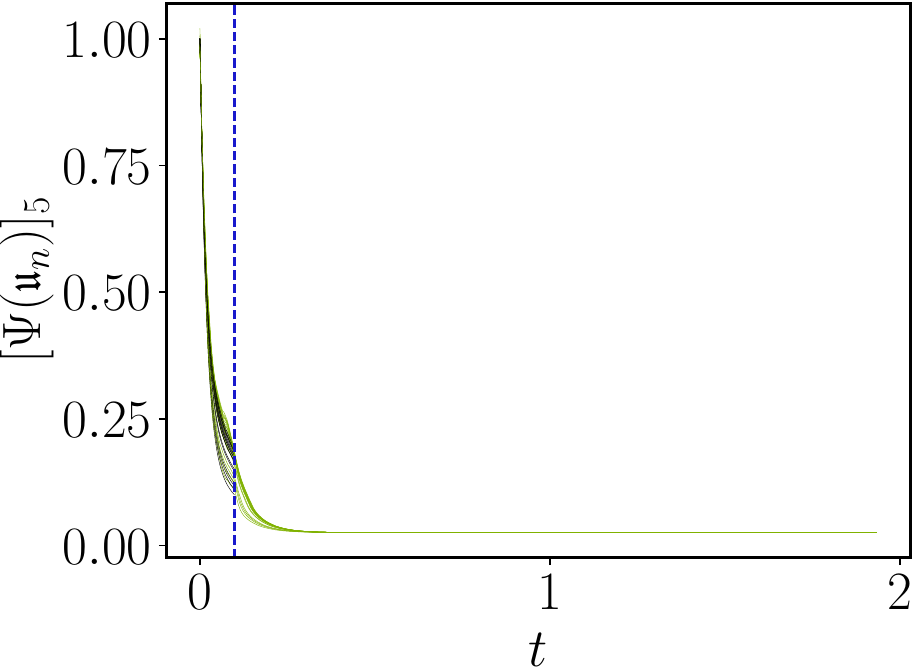}\hspace{0.3cm}
                \includegraphics[width=0.29\textwidth]{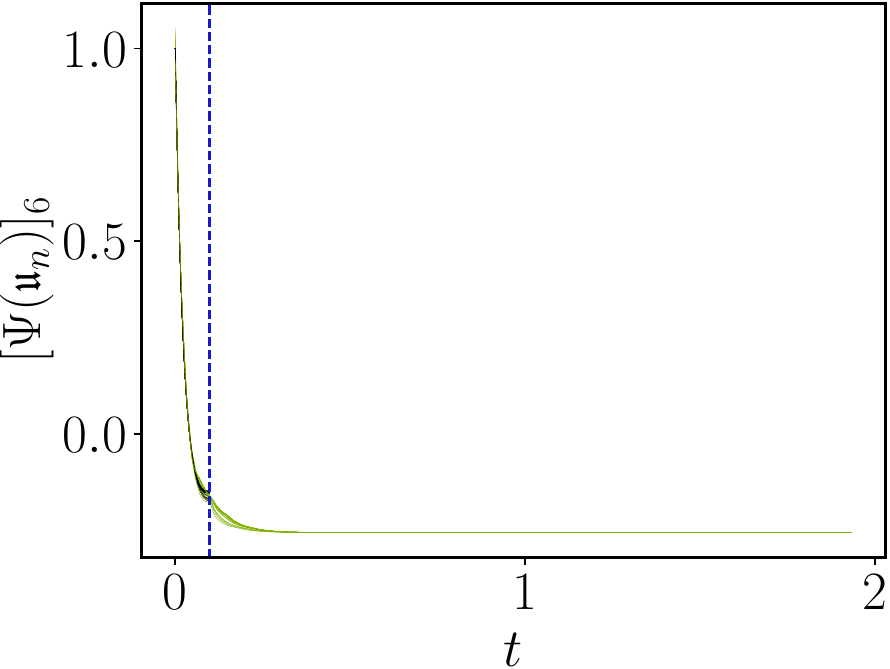}
        \end{minipage}
    \end{minipage}

        \vspace{1cm}
        \begin{minipage}{1\textwidth}
            \begin{turn}{90}
    			\hspace{-0.8cm}  {\large\textbf{$N = 500$}}
            \end{turn}
            \hspace{0.2cm} 
            \begin{minipage}{0.9\textwidth}
        	\includegraphics[width=0.29\textwidth]{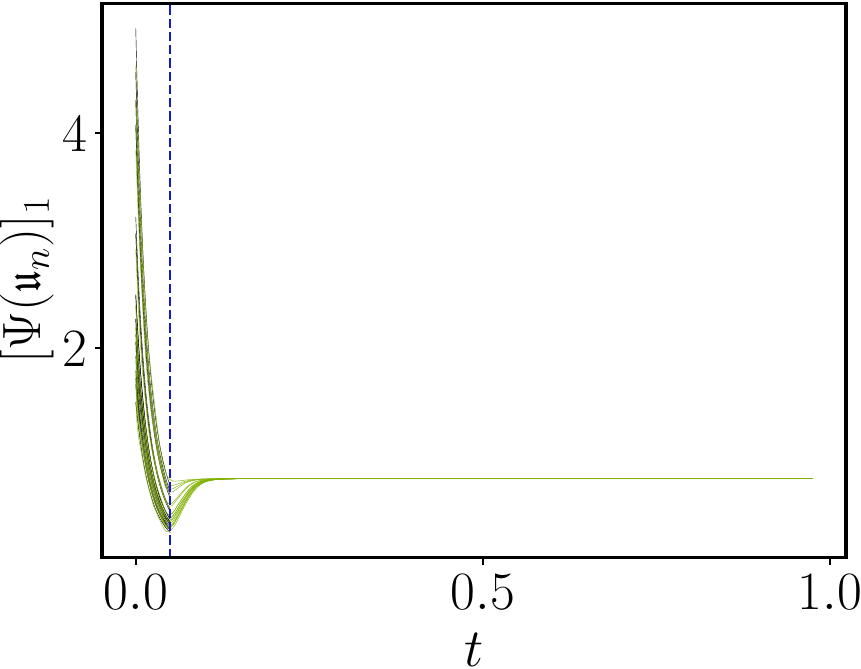}\hspace{0.3cm}
        	\includegraphics[width=0.29\textwidth]{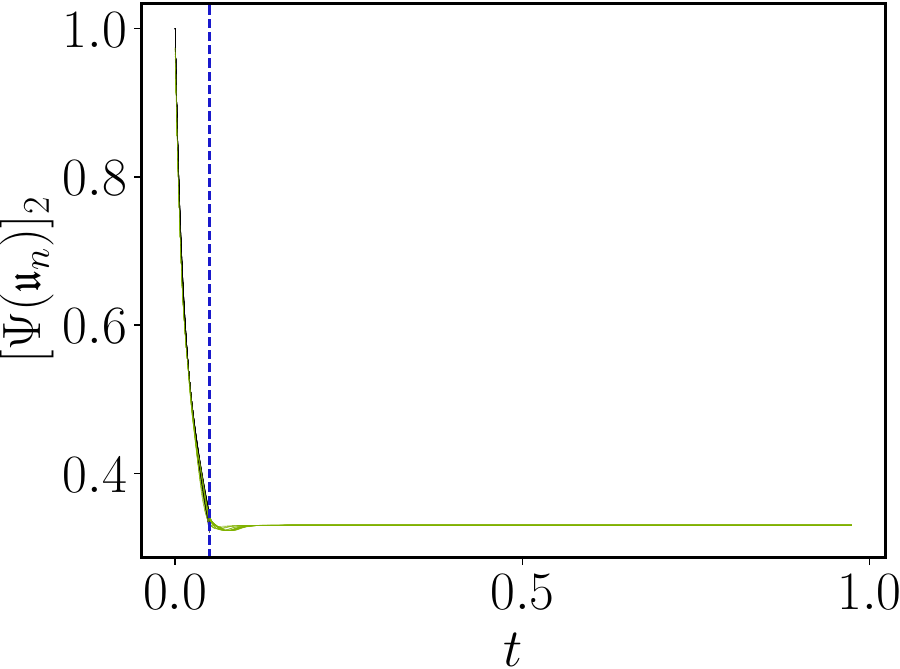}\hspace{0.3cm}
        	\includegraphics[width=0.29\textwidth]{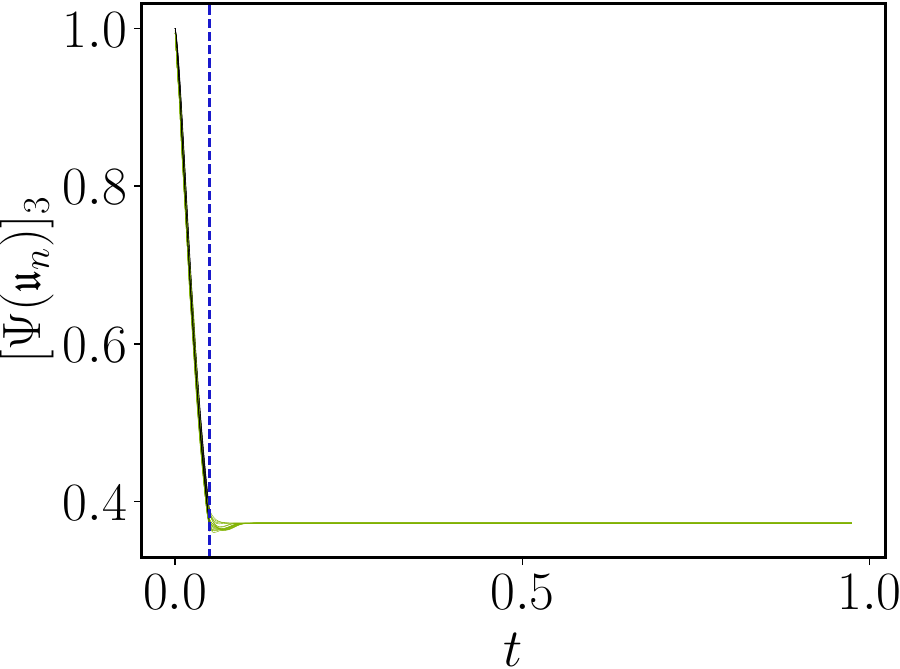}\\[0.3cm] 
                \includegraphics[width=0.29\textwidth]{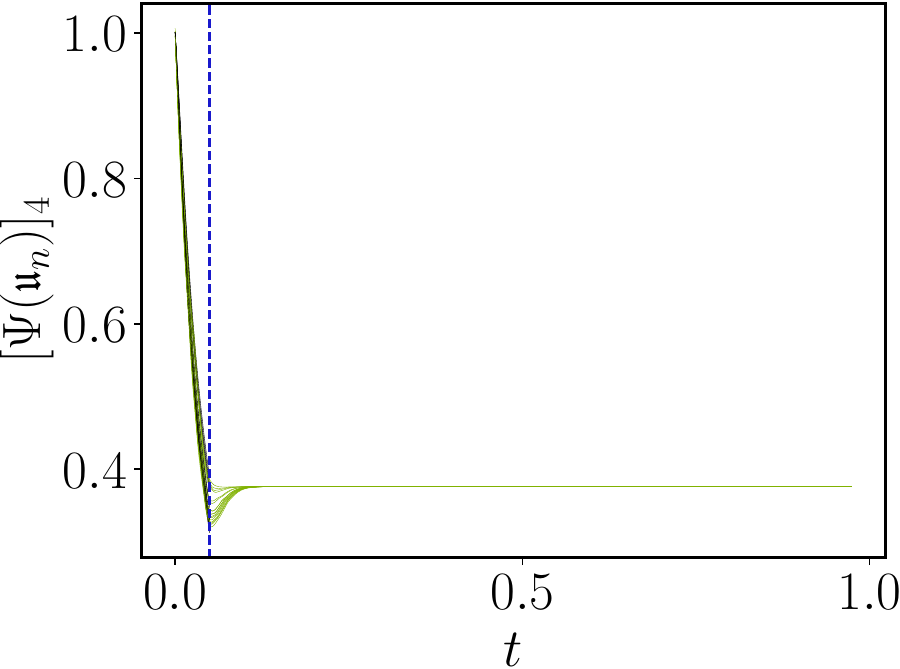}\hspace{0.3cm}
        	\includegraphics[width=0.29\textwidth]{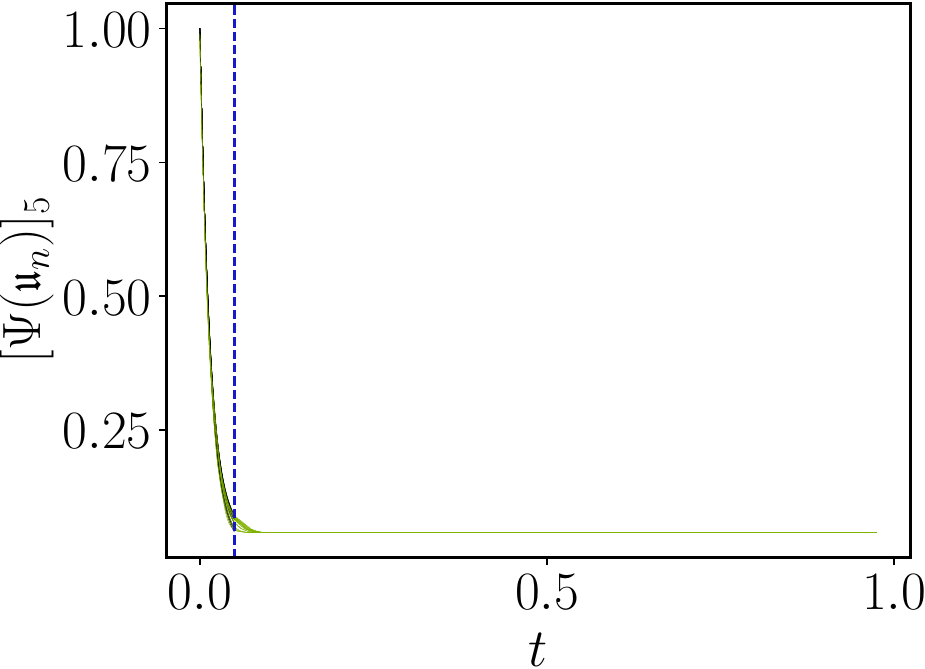}\hspace{0.3cm}
        	\includegraphics[width=0.29\textwidth]{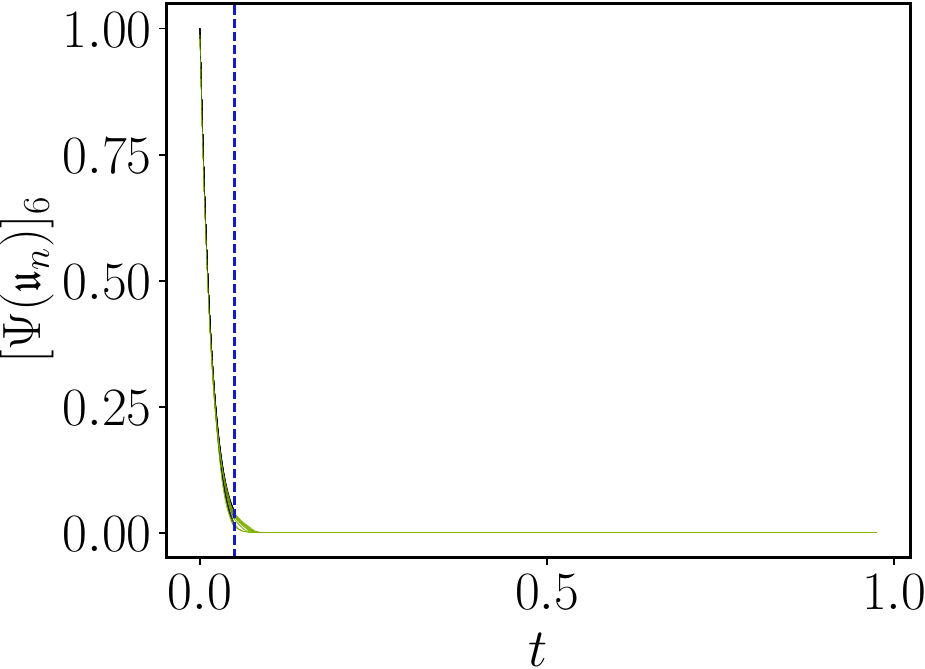}\hspace{0.3cm}
        \end{minipage}
    \end{minipage}
        
	\caption{Non linear case. Components of $\Psi(\ut_n)$, one for each panel. First row of panels corresponds to $N = 3$, second and third row corresponds to $N = 100$, while the last two rows corresponds to $N = 500$. For visualization purposes, only the first six components of $\Psi(\ut_n)$ are displayed. $\ut_n$ is computed using FE with $\Delta t = 0.99 \Delta t_{\max}$. The vertical blue-dashed line is located at the end of the training time. \rev{Note the use of different time ranges for each $N$ for graphical clarity.}}
	\label{fig:a_stab_linear_nonlinear}
\end{figure*}

\clearpage

\subsection{Test 2: Convergence in time}\label{sec:case_2_time_convergence}
In this section, we aim to numerically verify the convergence of the method as $\Delta t \to 0$, after the training of the neural networks. The global error in \eqref{eq:final_error} depends on both the accuracy of the time integration method and the neural networks. Here, our goal is to assess convergence by isolating the contribution of the time integration scheme in \eqref{eq:final_error}.
\rev{The first case, \Cref{sec:test_convergence_analytical}, concerns a manufactured problem which has an analytical solution, whereas the second case, \Cref{sec:test_convergence_generic}, addresses a more general setting.
To ease the readability, we report here the common setup of the two cases. 
}

Since our goal is to analyze temporal convergence, in light of \eqref{eq:final_error}, the approximation error of the neural networks must be reduced as much as possible to isolate the term $L_{\Psi}C_1\Delta t^p$. To this end, we employ deep neural networks with multiple layers and a large number of trainable parameters \rev{for both cases}. \rev{Their specific architecture will be detailed in the following.}

The width of the layers is chosen in agreement to what reported in \Cref{sec:autoencoders} and the guidelines provided in \cite{Li2023, Cai2022, Park2020}. In order to ensure continuity to $F_n$, the activation function used in the encoder are ELU $\in \mathcal{C}^1$. We investigate three training scenarios:
\begin{enumerate}
	\item \textit{Exact rhs}: $F_n$ is computed directly from its definition, $J_{\Psi'}F_N(\Psi(\cdot))$, yielding the most accurate result since only the autoencoder is approximated by neural networks.
	\item \textit{Semi data-driven}: $F_n$ is approximated by $N_{F_n}$ using augmented data and trained according to the strategy outlined in \Cref{sec:semi_datadriven}.
	\item \textit{Fully data-driven}: $F_n$ is approximated by $N_{F_n}$ which is trained following the methodology described in \Cref{sec:fully_datadriven}. The training is accomplished with the FE difference operator, see \eqref{eq:loss_2_fully_datadriven}. 
\end{enumerate}
After training, the quality of the DRE is assessed by computing the average relative error over the test dataset:
\begin{equation*}
	e_{\text{multi}} = \left(\frac{1}{n_{\text{test}}}\sum_{i=1}^{n_{\text{test}}}\sum_{j=1}^{N_{\text{time}}}\frac{\|u_{N,i}^j - N_{\Psi}(\uut_{N,i}^j)\|_2^2}{\|u_{N,i}^j\|_2^2}\right)^{1/2},
\end{equation*}
where $n_{\text{test}}$ is fthe number of data in the test dataset. 

\subsubsection{Analytical case - manufactured solution}\label{sec:test_convergence_analytical}

\rev{
In order to properly isolate the error associated with time discretization, we introduce a manufactured solution providing analytical expressions for \Node\ and \node. In particular, we derive explicit analytical expressions for $\Psi'$, $\Psi$, and $F_n$.
Nevertheless, we also report results in which the aforementioned quantities are approximated by properly trained neural networks, namely $N_{\Psi'}$, $N_\Psi$, and $N_{F_n}$, whose architectures are reported in \Cref{sec:architectures}, Tab.~\ref{tab:case_2_convergence}.
This strategy allows for a clear separation between the error contribution arising from time discretization and the additional error introduced by the approximation capabilities of the neural networks. This is achieved by comparing the results obtained using the analytical expressions of $\Psi'$, $\Psi$, and $F_n$ with those obtained by replacing them with $N_{\Psi'}$, $N_\Psi$, and $N_{F_n}$, respectively. 
In the following, we set $W \in \mathbb{R}^{N\times n}$ to be a matrix whose entries are randomly sampled from a uniform distribution over $[0,1)$. Its pseudoinverse is denoted by $W^\dagger$.  Here, $N = 3$ and $n = 2$. 
We denote by $w_1$  the first column of $W$.
We then consider the following \Node:
\begin{align}\label{eq:man_Node_conv}
\begin{aligned}
    &\dot u_N = \mu u_N\log(u_N),  \quad \text{for } t\in (0,T], \\
    &u_N(0) = \exp\left(w_1\right)
\end{aligned}
\end{align}
where $\log(\cdot)$ and $\exp(\cdot)$ are to be understood component-wise; $T = 1$ is the final time. 
The decoder is defined as $\Psi(u_n) = I_{N\times N}\,\exp(W \, u_n )$ while the encoder is defined as $\Psi'(u_N) = W^\dagger\log(I_{N\times N}\, u_N )$. Note that here the biases of the layers are null. 
%
%
We highlight that, in this setting, non-standard activation functions, namely $\exp$ and $\log$, are employed. This choice facilitates the analytical expressions of the quantities involved in this test case, which is the primary focus of the present analysis.

From the above definitions, we can obtain the expression for the \node: 
\begin{align*}
\begin{aligned}
     &[\dot u_n]_i = [J_{\Psi'}F_N]_i =\sum_k \left(\sum_j W_{ij}^\dagger\frac{1}{[u_N]_j}\delta_{ik}\right) [F_N]_k, \\
     &u_n(0) = [1,0]^\top. 
\end{aligned}
\end{align*}
Given the above definitions, the exact $N$- and $n$-solutions are given, respectively, by
$u_N = \exp\left(\exp(\mu t)w_1\right)$ and $u_n = \exp(\mu t)[1,0]^\top$. 
Note that the vanishing second component of $u_n$ arises from the choice of selecting only the first column of $W$, namely $w_1$, in the definition of $u_N$ and of the associated \Node. Moreover, we observe that, due to the expression of $u_N$, the resulting manifold is nonlinear.

The parameter $\mu$ is sampled 200 times from a uniform distribution, $\mu \sim \mathcal{U}(0.1, 2)$. Among these samples, 100 are used to form the training dataset, 50 the validation dataset (used to compare the loss during the training), and 50 the test dataset (used to assess the final accuracy of the model after training). \\
The neural networks, whose architectures are reported in Tab.~\ref{tab:case_2_convergence}, are trained using the FE scheme with timestep $\Delta t_\text{train} = 0.01$. During the online phase, the timestep size $\Delta t$ can be varied.

In Fig.~\ref{fig:convergence}, we report the values of $e_\text{multi}$ for both the case employing analytical expressions for $\Psi'$, $\Psi$, and $F_n$ (panel (a)) and the case in which the reduced model is constructed using $N_{\Psi'}$, $N_\Psi$, and $N_{F_n}$ (panel (b)). \\
The values of $e_{\text{multi}}$ are shown for the three aforementioned strategies, namely ``Exact rhs'', ``Semi data-driven'', and ``Fully data-driven''. \\
The trained neural networks are employed in combination with different time-integration schemes, possibly different from the one used during training. Specifically, we consider FE (black and yellow lines), the linear multistep Adams–Bashforth 2 method (AB2, green lines), and, for completeness, the multi-stage Runge–Kutta 5 method (RK5, purple lines). The training timestep size, $\Delta t_\text{train}$, is also indicated by a vertical line.

For the analytical case (panel (a)), we observe that the convergence order is the expected one and that the error steadily decreases, reaching very small values for RK5, where the observed stagnation can be attributed to truncation errors. Note that, in panel (a), due to the use of exact $\Psi'$, $\Psi$, and $F_n$, some scenarios collapse onto a single line.

Conversely, when these quantities are approximated using neural networks (panel (b)), the error exhibits stagnation to higher values than that of panel (a), indicating that the approximation error introduced by the neural networks becomes dominant. It is important to note that the semi data-driven training strategy allows for improving the accuracy of the method by reducing the timestep during the online phase to values \textit{smaller} than those used during training, therefore enabling temporal over-resolution.

Within the fully data-driven strategy, instead, no improvement is observed when decreasing the timestep. Moreover, the error behavior displays a local minimum at the training timestep value, which can be explained by the fact that $N_{F_n}$ approximates $F_n$ together with correction terms, some of which depend on the specific $\Delta t_\text{train}$, see \eqref{eq:what_nn_learns_1}.

\begin{figure}[h]  
    \subfloat[Manufactured model - Exact]{\includegraphics[width=0.3\linewidth]{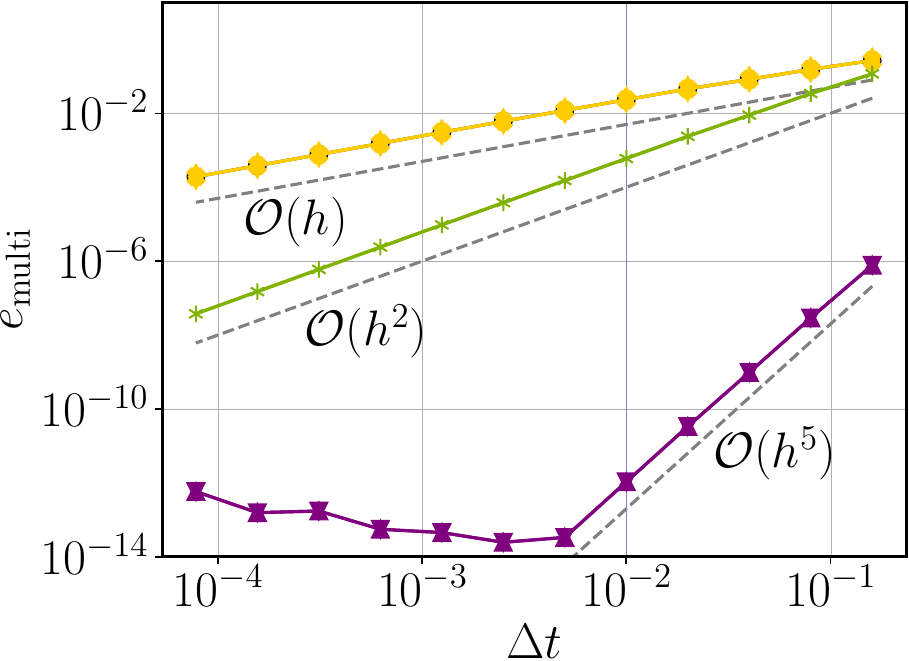}}\hspace{2mm}
    \subfloat[Manufactured model - NN approx.]{\includegraphics[width=0.3\linewidth]{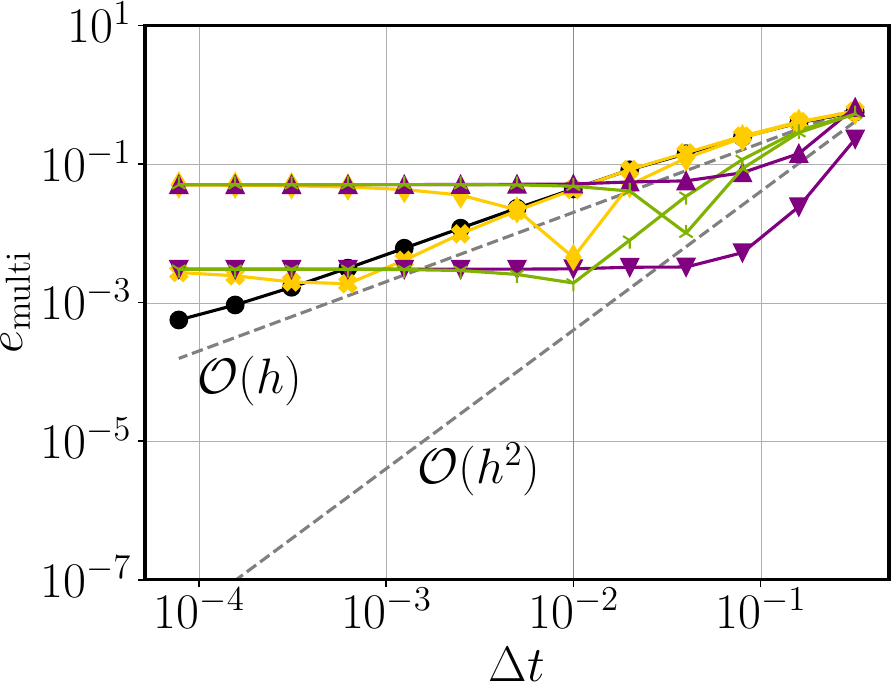}}\hspace{8mm}
    \subfloat[SIR - NN approx.]{\includegraphics[width=0.3\linewidth]{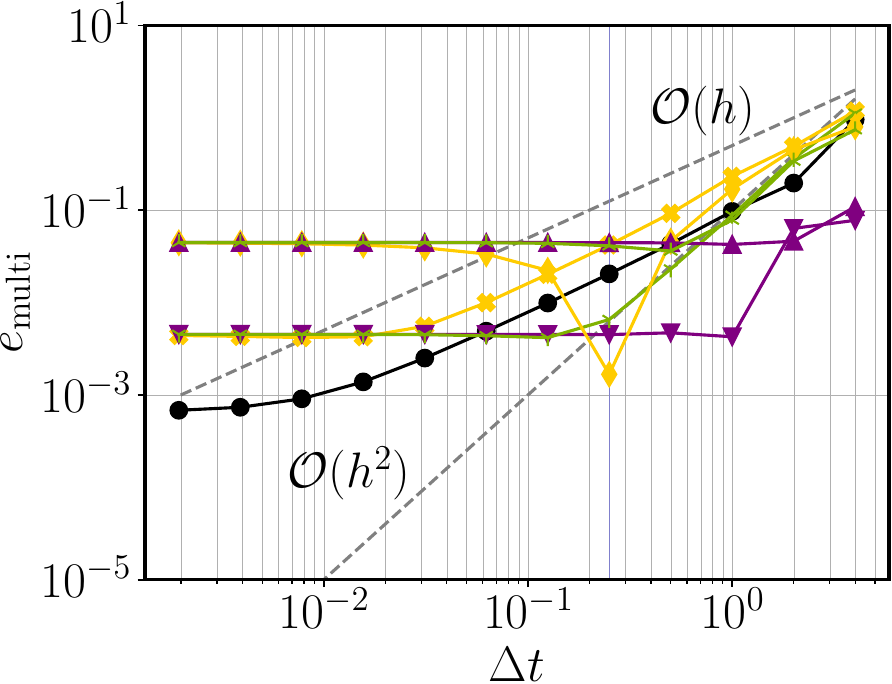}}\\
    {\phantom{0} \\}
    \begin{minipage}{1\textwidth}
    	\hspace{15mm}
	    \adjustbox{valign=t}{\includegraphics[width=0.15\linewidth]{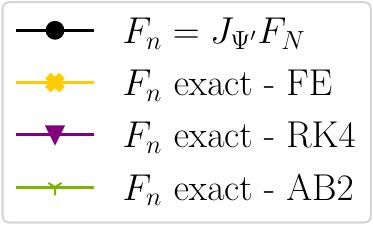}} \hspace{45mm}
	    \adjustbox{valign=t}{\includegraphics[width=0.33\linewidth]{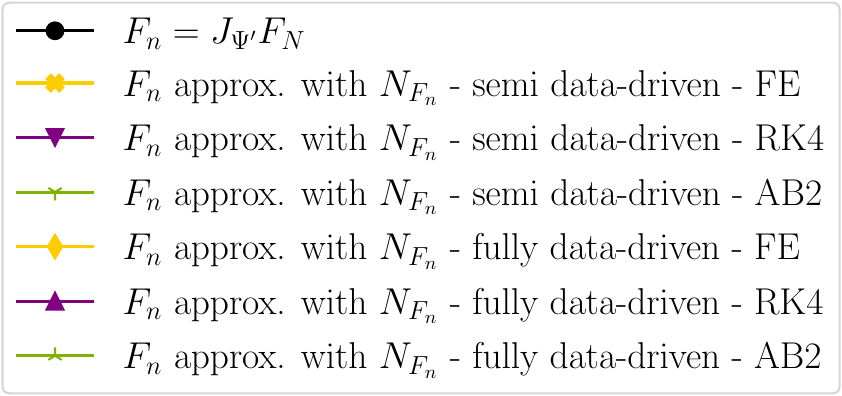}}
    \end{minipage}
    \caption{\rev{Time convergence for different strategies and integration schemes is shown. (a) the \Node  is defined in \eqref{eq:man_Node_conv}. The reduced order model is constructed using analytical $\Psi'$, $\Psi$, $F_n$. (b) the \Node  is defined in \eqref{eq:man_Node_conv}. The reduced order model is constructed using trained $N_{\Psi'}$, $N_\Psi$, $N_{F_n}$. (c) SIR model, see \eqref{eq:sir}. The observed orders of convergence match the expected ones up to a plateau, which arises due to the approximation error introduced by the neural networks.}}
    \label{fig:convergence}
\end{figure}

}

\subsubsection{Generic case - SIR}\label{sec:test_convergence_generic}

\rev{To strengthen the discussion on time convergence, we present here a case for which the analytical expressions of $\Psi'$, $\Psi$, and $F_n$ are not available.}
As a model case, we consider the SIR system \eqref{eq:sir}, with $\beta$ treated as a parameter. The main settings are as follows: $\beta \sim \mathcal{U}(0.5, 2.5)$, $\gamma = 0.5$, $N = 100$, and initial condition $u_{N0} = [S_0, I_0, R_0]^T = [90, 10, 0]^T$. The full dataset comprises $300$ trajectories in the time range $[0, T]$, $T=20$, each corresponding to a different realization of $\beta$. These data are partitioned into $200$ samples for training, $50$ for validation , and $50$ for testing.
Each trajectory is computed using the Runge–Kutta 4(5) method \cite{Butcher2016}, as implemented in Scipy \cite{Virtanen2020}, with adaptive time-stepping and stringent tolerances: $\text{atol} = 10^{-16}$ and $\text{rtol} = 10^{-12}$. These settings ensure that the time discretization error of the \Node\ can be neglected.

The neural networks' architectures are summarized in \Cref{sec:architectures}.
The training is carried out adopting the FE scheme for all the three aforementioned options for $2000$ epochs using a minibatch size of $32$. The learning rate is set to $10^{-3}$ for $\text{epochs} < 500$, reduced to $10^{-4}$ for $500 \leq \text{epochs} < 1500$, and further to $10^{-5}$ thereafter. Across all strategies, training decreases the loss function by approximately $6$–$7$ orders of magnitude relative to its initial value, typically reaching a minimum around $10^{-3}$. Given that the loss function is the mean squared error, this leads to an estimation of the relative error in $u_N$ and $F_n$ of the order of magnitude of $10^{-3}$. Training is performed on data sampled with timestep size $\Delta t_{\text{train}} = 10^{-3}$. \rev{As in the previous case, during the online phase, the timestep size $\Delta t$ can be varied.}

\rev{In Fig.~\ref{fig:convergence} panel (c), we report the values of $e_{\text{multi}}$ for the three strategies ``Exact rhs'', ``Semi data-driven'', and ``Fully data-driven'' as previously mentioned. The trained neural networks are used in combination of different schemes, possibly different from that used during the training: FE, AB2, and, for completeness, RK5. The training timestep size, $\Delta t_{\text{train}}$, is also indicated with a vertical line.}
First, we observe that with strategy 1 (black line), the error exhibits first-order convergence up to a stagnation point. This plateau can be explained with a balance between the approximation error of the neural network and the time discretization error, as predicted by \eqref{eq:final_error}. Looking at the yellow lines (FE), strategy 3 displays a minimum error near the training timestep size. This minimum is also close to the lowest value across all configurations. Reducing $\Delta t$ further leads to an increase in the error. This behavior is expected, as $N_{F_n}$ approximates $F_n$ together with correction terms, some of which depend on the specific $\Delta t_{\text{train}}$, see \eqref{eq:what_nn_learns_1}. This phenomenon disappears when adopting strategy 2. Indeed, its corresponding line shows a higher error at $\Delta t_{\text{train}}$ but continues to decrease with order 1 until reaching a stagnation point for smaller timestep sizes.

We now consider the green lines (AB2). As expected, we observe second-order convergence, and the minimum error is comparable to that obtained with FE. However, since $N_{F_n}$ is trained to minimize \eqref{eq:loss_2_fully_datadriven} using the FE difference operator, the error minimum valley around $\Delta t_{\text{train}}$ does not appear.

\rev{Regarding the RK5 scheme (purple lines), we observe that, due to its higher order of convergence, the overall error remains dominated by the neural network approximation even for relatively large timestep sizes. Moreover, the stagnation values coincides with those of the other methods, namely FE and AB2.}

We emphasize that $e_{\text{multi}}$ is computed on the test dataset, so unseen data,  thus confirming the stability of the results, the attainment of the expected convergence order, and the generalization capability of the trained models.
%
%
%

\subsection{Test 3: Chemical reactions kinetic}\label{sec:case_3_chemistry}

In this case, we investigate a generic transient system of chemical reactions \cite{Steefel1994, Fumagalli2021}. This test case serves to demonstrate the application of the DRE to a general nonlinear ODE. The main challenges arise from the strong nonlinearities of the \Node\ and from the larger value of $N$ compared to the previously analyzed cases.

We consider a total of $n_r = 6$ generic forward and backward reactions involving chemical species $A_i$, for $i = 1, \ldots, n_s$, with $n_s = 19$ denoting the total number of species. A generic reaction takes the following form:
\begin{equation*}
	\sum_{i\in \overline{\mathcal{K}}} s_{ij}A_i \rightarrow \sum_{k\in \mathcal{K}} -s_{kj}A_k, \quad \forall j=1,\ldots, n_r,
\end{equation*}
Here, $\overline{\mathcal{K}}$ and $\mathcal{K}$ represent the sets of indices corresponding to reactants and products, respectively, and $s_{ij}$ denote the stoichiometric coefficients.
We denote by $u_N:[0, T] \to \mathbb{R}^{n_s}$ the concentration vector of the chemical species, by $r:\mathbb{R}^{n_r} \to \mathbb{R}^{n_r}$ the reaction rate vector, and by $S \in \mathbb{R}^{n_s \times n_r}$ the stoichiometric matrix, whose entries are the coefficients $s_{ij}$. We choose the following $S$:
{\small
\begin{equation*}
	S^\top = 
	\begin{bmatrix}
		1 & 0  & -1 & -1 & 0  & 0  & 0  & 0  & 1  & 0 & -1 & 0  & 0  & 0  & 0  & 1  & -1 & 1  & 0 \\
	   -1 & 1  & 0  & 0  & -1 & 0  & 0  & 0  & 0  & 1 & 0  & -1 & 0  & 0  & 0  & 0  & 1  & -1 & 1 \\
		0 & -1 & 1  & 1  & 0  & -1 & 0  & 0  & 0  & 0 & 1  & 0  & -1 & 0  & 0  & 0  & 0  & 0  & 0 \\
		1 & 0  & -1 & 0  & 1  & 0  & -1 & 0  & 0  & 0 & 0  & 1  & 0  & -1 & 0  & 0  & 0  & 0  & 0 \\
	   -1 & 1  & 0  & 0  & 0  & 1  & 0  & -1 & 0  & 0 & 0  & 0  & 1  & 0  & -1 & 0  & 0  & 0  & 0 \\
		0 & -1 & 1  & 0  & 0  & 0  & 1  & 0  & -1 & 0 & 0  & 0  & 0  & 1  & 0  & -1 & 0  & 1  & -1\\ 
	\end{bmatrix}.
\end{equation*}
}

The time evolution of the concentrations is governed by the following \Node:
\begin{align}\label{eq:ode_chemistry}
\begin{aligned}
	&\dot u_N = Sr(u_N), \\
	&u_N(0) = [0.79, \mu_1, 0.92, 0.41, 0.32, 0.39, 0.68, 0.89, 0.02, 0.28, 0.58, 0.94, 0.1, 0.9, 0.83, 0.72, 0.72, 0.50, 0.84].
\end{aligned}
\end{align}
The first parameter of this problem is $\mu_1\sim\mathcal{U}(0.74, 0.94)$, which influences the initial condition of the second chemical species.
For the reaction rates, we employ the following model:
\begin{equation*}
	r_j = k_j \prod_{i=1}^{n_s} [u_N]_i^{|S_{ij}| \cdot \mathbf{1}_{S_{ij} < 0}} \quad \text{for } j = 1, \dots, n_r,
\end{equation*}
where $\mathbf{1}_{S_{ij} < 0}$ denotes the indicator function, which equals $1$ if $S_{ij} < 0$ and $0$ otherwise, and $k_j$ is the forward rate constant for the $j$-th reaction. The latter is typically subject to uncertainties in real experimental settings. To account for this, we treat the second component of the vector $k$ as the second parameter of the problem, denoted by $\mu_2$, and assume it follows the distribution $\mu_2 = k_2 \sim \mathcal{U}(5,11)$. We assume the parameters to be independently distributed. The forward rate vector is given by $k = [5, \mu_2, 5, 5, 5, 5]$.
 
The DRE is trained using a fully data-driven strategy, relying on the Adams--Bashforth 2 integration scheme for the loss contribution \eqref{eq:loss_2_fully_datadriven}.

We are dealing with a conservative system, which means that the following equation should be satisfied at all times
\begin{equation*}
    \sum_{k=1}^{N} [u_{N}(t)]_k = \sum_{k=1}^{N} [u_{N0}]_k, \quad \forall t \in [0, T].  
\end{equation*}
It is therefore relevant, in this scenario, that the reduced order model preserves this property. To this aim, a simple strategy is to modify the loss function by adding a contribution whose minimization enforces conservation:
\begin{equation*}
    \mathscr{N}_{ij}^\text{cons} = \rev{\left\| \sum_{k=1}^{N=19} [u_{N,i}^j]_k - [N_{\Psi}(\uut_{n,i}^j)]_k \right\|_2^2}.
\end{equation*}
The training dataset consists of $200$ samples uniformly distributed in the parameter space, with $100$ time steps per sample. The validation and test datasets each contain $10\%$ of the number of samples used for training.

After training, the quality of the DRE is assessed by computing the time-dependent average relative error, $e_\text{ave, rel}(t)$, and the average of discrepancy of the total concentration, $e_\text{ave, con}(t)$:
\begin{align}\label{eq:case_chemistry_errs}
\begin{aligned}
	&e_\text{ave, rel}(t_j) =  \left(\frac{1}{n_{\text{test}}}\sum_{i=1}^{n_{\text{test}}}\frac{\|u_{N,i}(t_j) - N_{\Psi}(\uut_{n,i}^j)\|_2^2}{\|u_{N,i}\|_2^2}\right)^{1/2}, \\
	&e_\text{ave, con}(t_j) =  \frac{1}{n_{\text{test}}}\sum_{i=1}^{n_{\text{test}}}\sum_{k=1}^{N=19} [u_{N,i}(t_j)]_k - [N_{\Psi}(\uut_{n,i}^j)]_k,
\end{aligned}
\end{align}
where $j=0,1,\ldots$ and $t_j = j\Delta t$.

\subsubsection{Results}
Fig.~\ref{fig:case_4_uN_err} shows the reference concentrations of each component,$[u_N]_i$, $i=1,\ldots,n_s$, obtained by solving \eqref{eq:ode_chemistry} with the Runge-Kutta 4(5) method implemented in SciPy \cite{Virtanen2020}, using strict tolerances ($\text{atol} = 10^{-16}$, $\text{rtol} = 10^{-12}$), represented by solid black lines. The reconstructed solution, $N_{\Psi}(\uut_n)$, \rev{which is computed using a timestep size $\Delta t = 0.025$,} is shown with dashed green lines. The reduced model is also queried at future, unseen times, \rev{$3 < t < 8.9$}, although training was performed only up to $t = 3$. \rev{We observe that both the conservation error and the relative error increase steadily at later times. Nevertheless, despite the evaluation on unseen data, the DRE provides reliable results for this test case.}

In the right panel of the same figure, the average relative error, $e_{\text{ave, rel}}$, is also displayed to provide a quantitative assessment of the DRE's accuracy. In the training time range, the error is low, reaching a peak of less than $2\%$.This holds even during the initial phase, where the concentrations vary rapidly. For unseen times, the error increases with a fairly linear trend reaching a maximum of about $4\%$.
In the right panel of Fig.~\ref{fig:case_4_uN_err} also $e_\text{ave, con}$ is represented. We observe a consistent overestimation of the total concentration. However, this discrepancy is small, as the total concentration is approximately $11.67$, and the deviation affects only the third significant digit.
\begin{figure*}[h]
	\centering
	\begin{minipage}{0.85\textwidth}
		\centering
		\subfloat[chemical species from 1 to 10]{\includegraphics[width=0.40\textwidth]{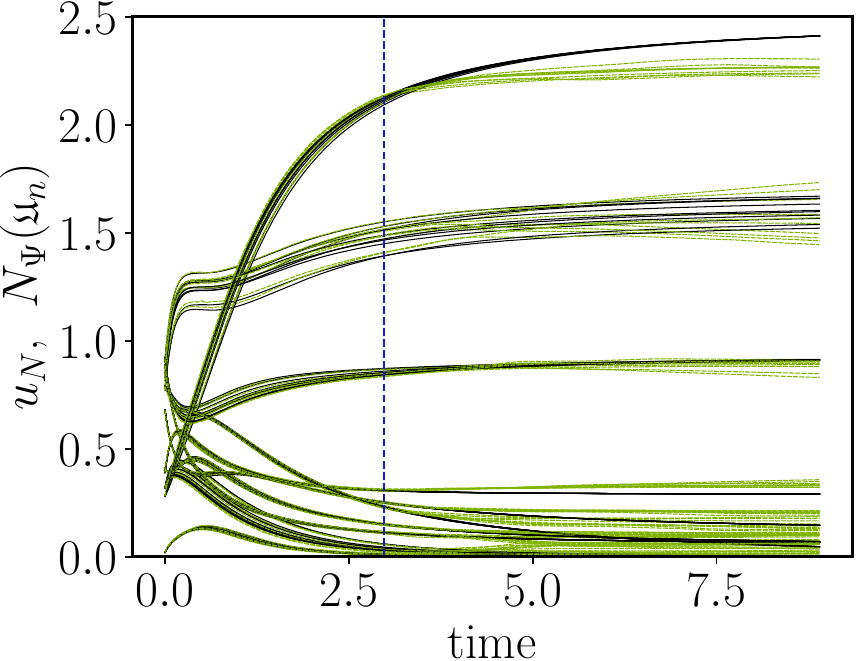}}
		\hspace{4mm}
		\subfloat[chemical species from 11 to 19]{\includegraphics[width=0.56\textwidth]{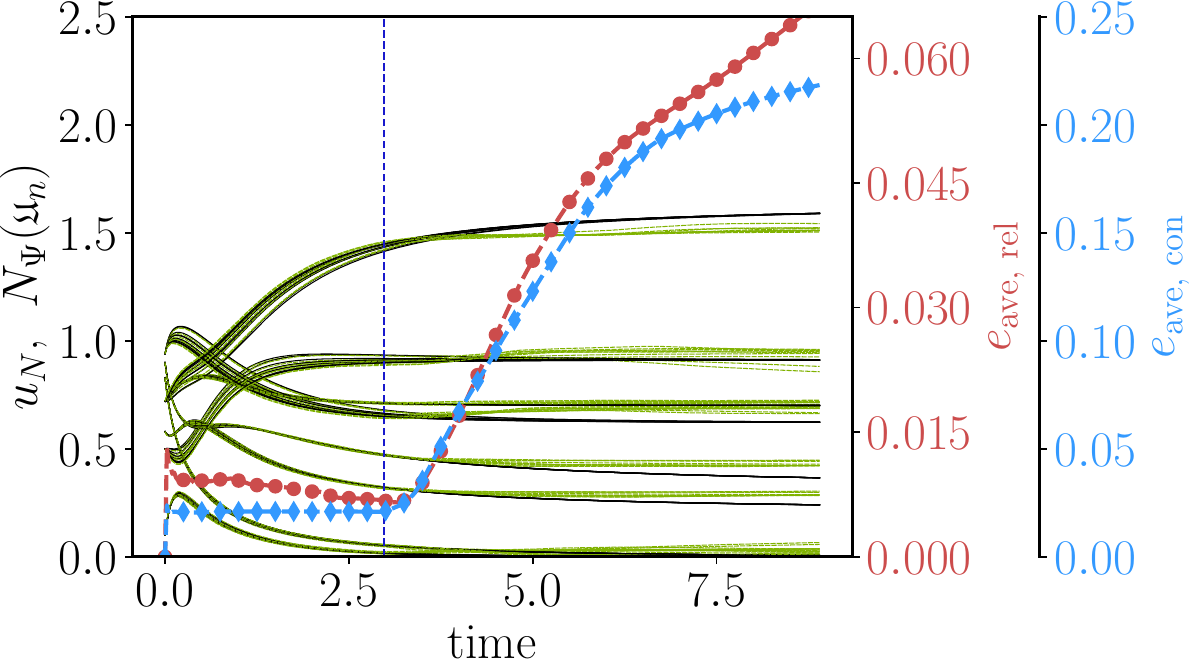}}\hfill
	\end{minipage}
	\begin{minipage}{0.14\textwidth}
		\raggedright	
		\vspace{-3cm}
		\hspace{1cm}
		\includegraphics[width=1\textwidth]{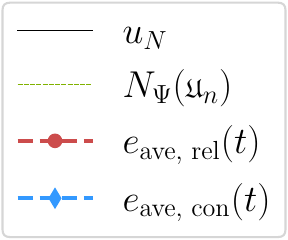}
	\end{minipage}
	\caption{Time evolution of the components of $u_N$. Solid black lines represent the reference solution obtained by accurately solving the \Node; dashed green lines show the reconstructed solution $N_{\Psi}(\uut_n)$. The two solutions exhibit good agreement visually. The training data, and hence the learned manifold, are restricted to the interval $t \leq 10$, which is marked by a vertical yellow dashed line. The red and blue dashed lines indicate the relative and conservation errors, as defined in \eqref{eq:case_chemistry_errs}.}
	\label{fig:case_4_uN_err}
\end{figure*}
Fig.\ref{fig:latent_set} shows the set ${ \uut_n(t;\mu) }$ obtained for $\mu$ belonging in the training dataset values. The set is partitioned into two subsets with different color scales: one corresponding to the $n$-solutions the training time interval $t \in [0, 3]$, called $U_t$, and the other corresponding to $n$-solution obtained for future times $t \in (3,6]$, called $U_e$. The two subsets are disjoint, indicating that $N_{F_n}$ is operating on unseen input during the training, those for future times. This highlights the challenge of time extrapolation. Indeed, $N_{F_n}$ is trained to approximate $F_n: U_t\to \mathbb{R}^n$ but subsequently evaluated on a larger domain $U_e \supset U_t$.
Nonetheless, in this particular case, the extrapolated predictions remain reliable, as also shown in Fig.\ref{fig:case_4_uN_err}. A more accurate extrapolation in time would be feasible if $u_n$ at future times remained within the training region as, for example, in the case where the solution of the \Node\ is periodic, so a training using data sampled in one period of time would be sufficient to characterize virtually all the values of $u_n$.
\begin{figure*}[h]
	\centering
	\includegraphics[width=0.4\textwidth]{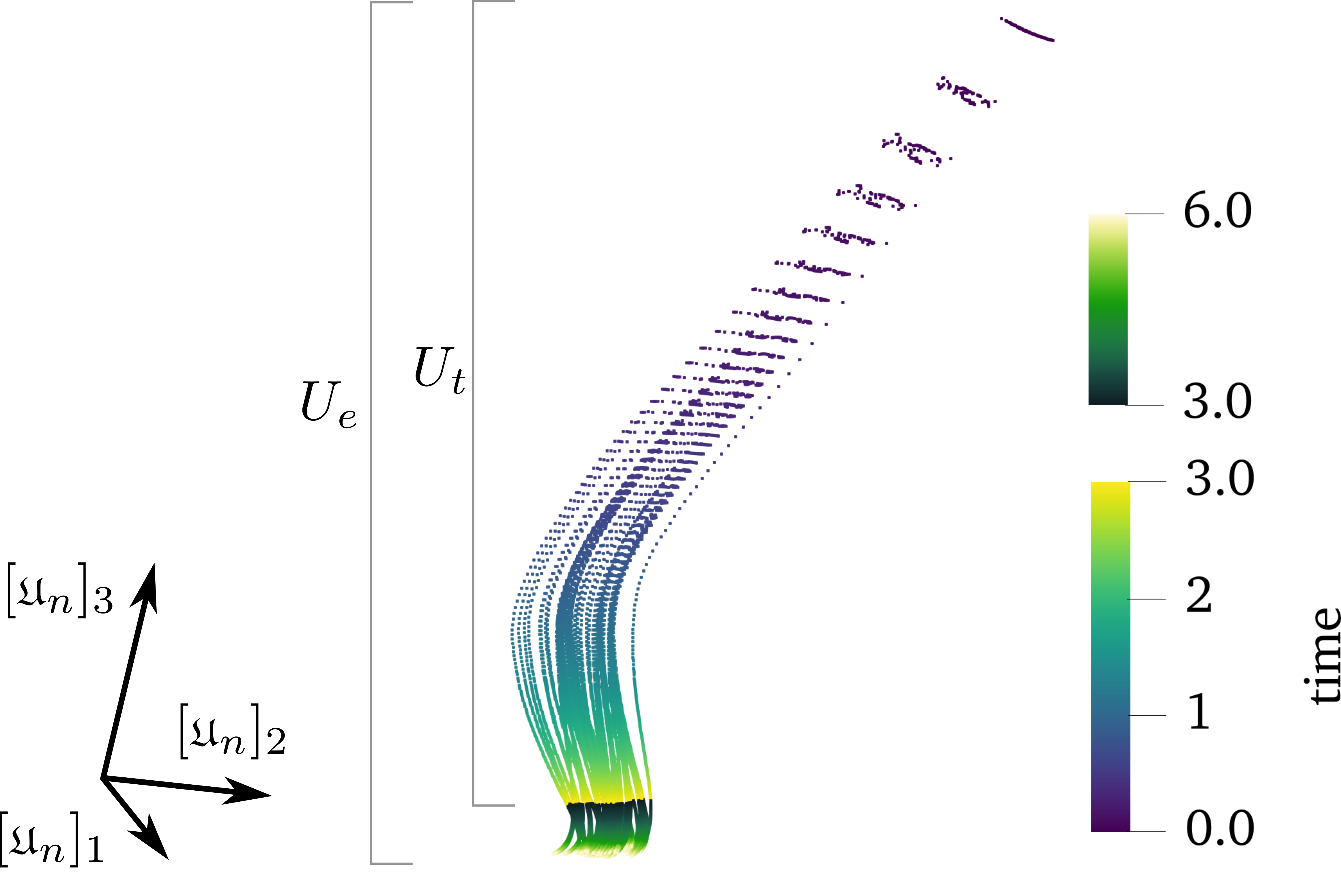}
	\caption{Set of $n$-solutions, $\uut_n^k$, at timesteps $k=0,\ldots,K$, with $\mu$ sampled from the training dataset, along with the extrapolated solutions for unseen future times.}
	\label{fig:latent_set}
\end{figure*}

\subsection{Test 4: Partial Differential Equations - Burgers' model}\label{sec:case_4_burgers}

\rev{
The purpose of this test is to illustrate an application to a PDE. 
We consider here the one-dimensional viscous Burgers' model:
\begin{align}\label{eq:burgers}
\begin{aligned}
	&\dot u + u \partialder{u}{x} -\mu_1\secondpartialder{u}{x} = 0, & t \in (0,T], \ x \in (a,b), \\
	&u(0, x) = \sin(2\pi \mu_2 (x-a)/(b-a)) + 1, &
\end{aligned}	
\end{align}
where $\mu_1 \sim \mathcal{U}(0.0001, 0.001)$ denotes the viscosity coefficient, $\mu_2 \sim \mathcal{U}(1,1.5)$ is the parameter affecting the initial conditions, and $a=0$, $b=1$, $T=1$. Periodic boundary conditions are imposed at the boundaries.
The viscosity is chosen to be small, which ensures the continuity of the solution while it may induce the formation of strong spatial and temporal gradients.

The PDE \eqref{eq:burgers} is discretized in space using centered finite-difference schemes for both the advection and diffusion terms, on a fine grid consisting of $1000$ nodes over $[a,b]$. We emphasize that the spatial discretization itself is not the focus of this test, rather, the aim is to demonstrate a possible application and effectiveness of the reduced model to a PDE setting. The resulting system of ODEs is then integrated in time using a fifth-order Runge--Kutta method with a timestep size of $0.005$. For simplicity, the discrete solution is subsequently downsampled in space to $50$ points. Therefore, the \Node\ has size $N=50$. \\
For a qualitative visualization of the underlying dynamics, see Fig.~\ref{fig:test_4_snaps}, which shows three time snapshots of \eqref{eq:burgers} for two different parameter configurations. \\
The neural network architectures are reported in \Cref{sec:architectures}, Tab.~\ref{tab:case_4_burgers}. They are trained on a dataset consisting of 200 parameter instances and tested on an independent dataset of 50 points. The fully data-driven training strategy is adopted, with a total of 500 epochs. \\
After training, the solution obtained from the DRE is compared against the full-order model. A first qualitative comparison is shown in Fig.~\ref{fig:test_4_snaps}, where an excellent agreement between the two solutions can be observed.

The average relative error, $e_\text{ave, rel}$, defined in \eqref{eq:case_chemistry_errs}, is reported in the right-most panel of Fig.~\ref{fig:test_4_err}. The results indicate a good level of accuracy, with an error of approximately $2\%$ over the entire training time interval. In contrast, the error increases rapidly for later times. This behavior can be explained by the fact that the functions are approximated over a prescribed domain but subsequently evaluated outside this domain, as discussed in \Cref{sec:case_3_chemistry} and illustrated in Fig.~\ref{fig:latent_set}.

Furthermore, in Fig.~\ref{fig:test_4_err}, we also report the initial conditions for selected values of $\mu_2$, together with the time evolution of two components of $\uut_N$, corresponding to $x=0$ and $x=0.5$, for the same parameter value. For graphical clarity, only two components are displayed. The strong nonlinearity of the problem is evident from the rapid temporal variations of the solution. Nevertheless, the reduced-order model is able to accurately track the time evolution.
}
\begin{figure}[H]	
    \centering
    \hspace{-50mm}
    \begin{minipage}{0.9\linewidth}
        \raggedleft
        \begin{turn}{90}
            $\mu = (0.00155,\, 1.03)$   
        \end{turn}
        \includegraphics[width=0.25\linewidth]{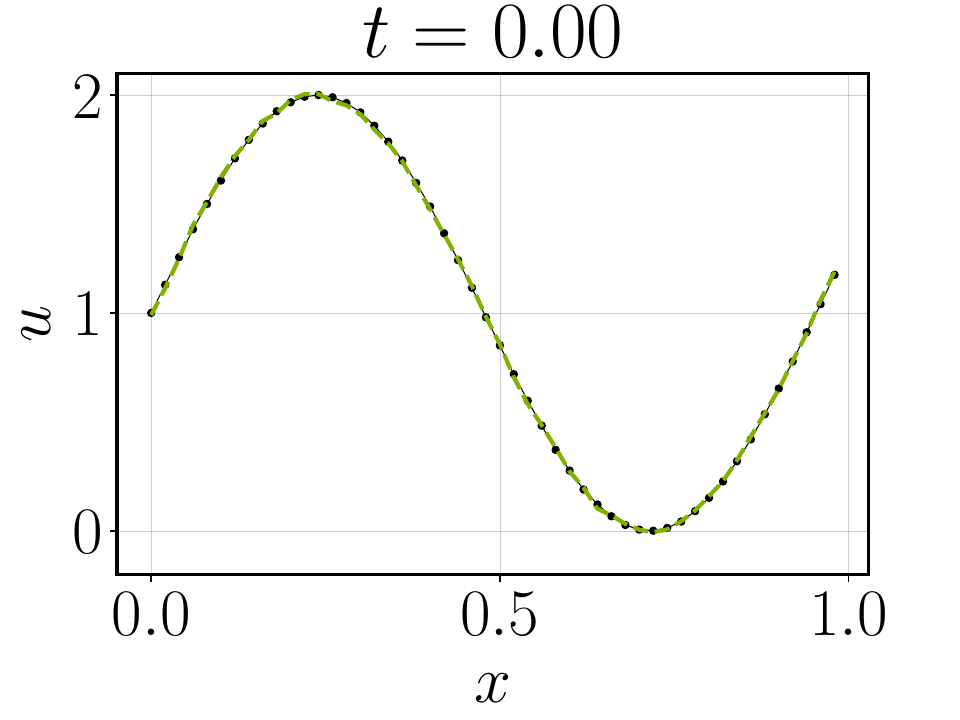}
        \includegraphics[width=0.25\linewidth]{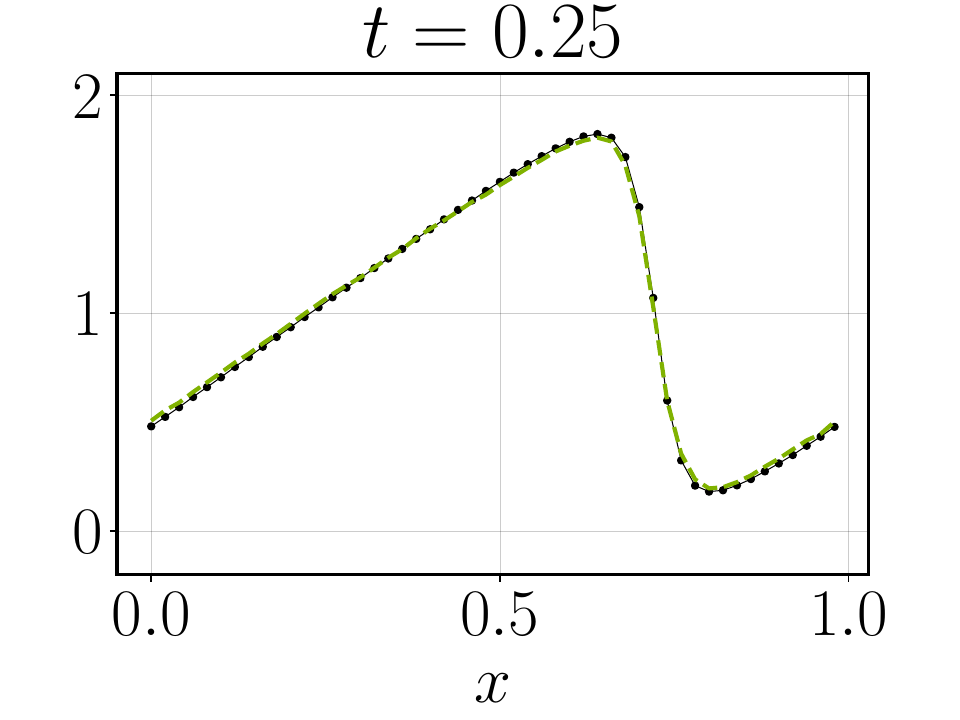}
        \includegraphics[width=0.25\linewidth]{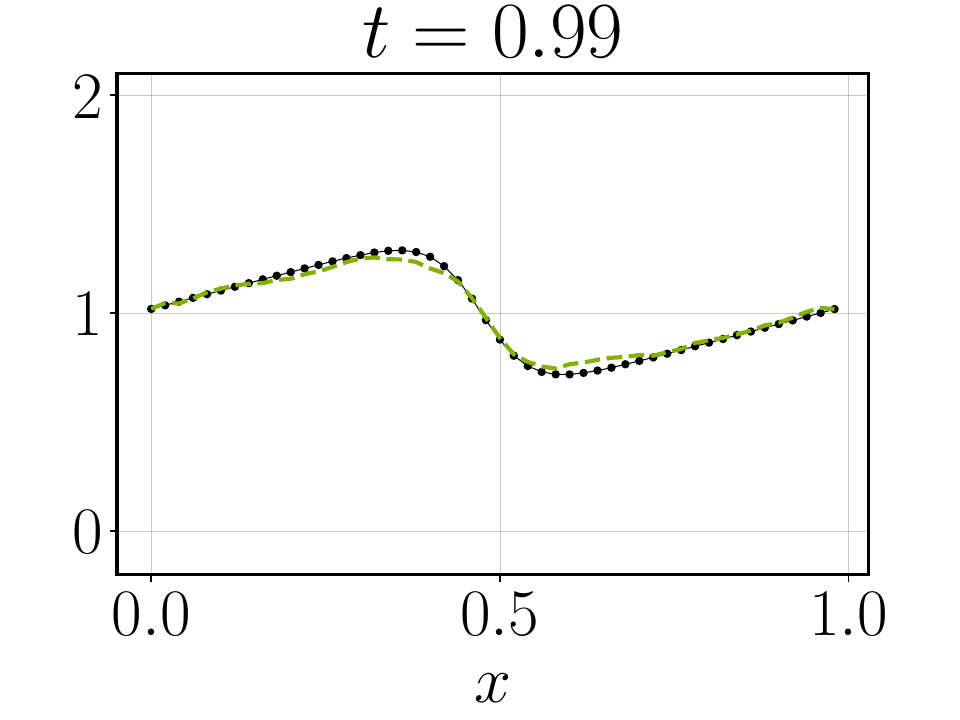}
    \end{minipage}
    \includegraphics[width=0.08\linewidth]{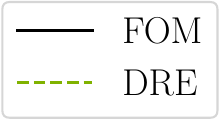}\\
    \vspace{5mm}
    \hspace{2mm}
    \begin{minipage}{0.9\linewidth}
        \begin{turn}{90}
            $\mu = (0.000444,\, 1.45)$   
        \end{turn}
        \includegraphics[width=0.25\linewidth]{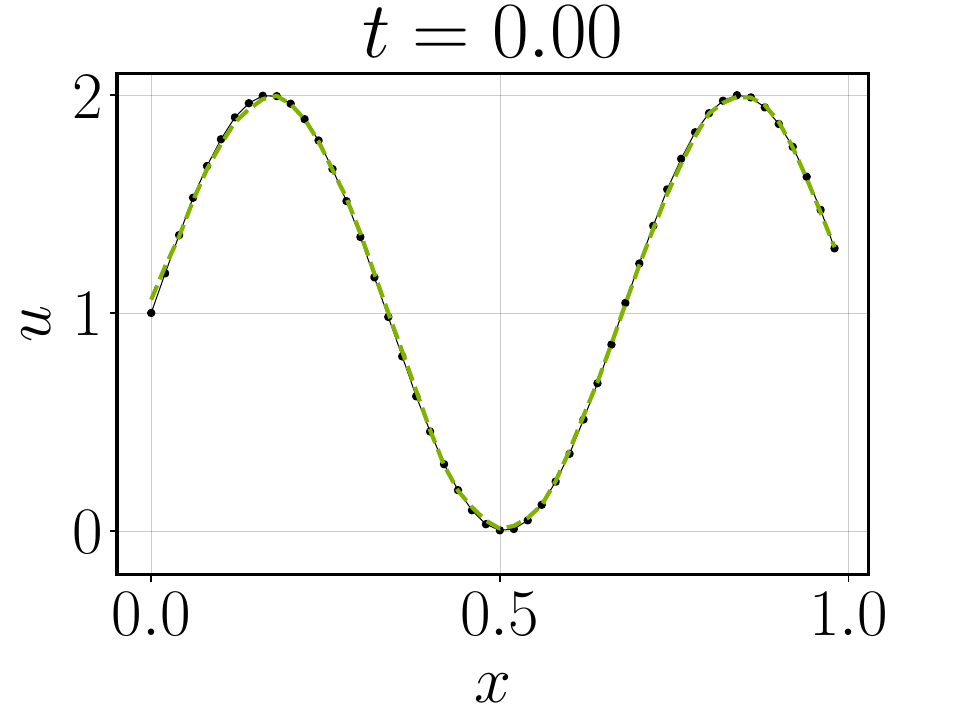}
        \includegraphics[width=0.25\linewidth]{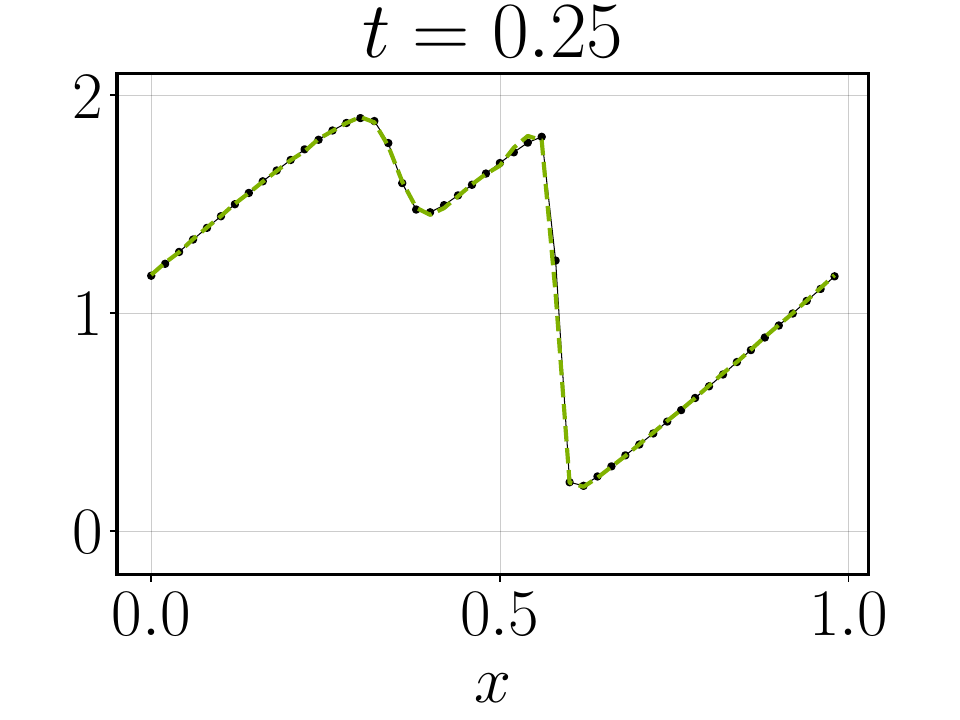}
        \includegraphics[width=0.25\linewidth]{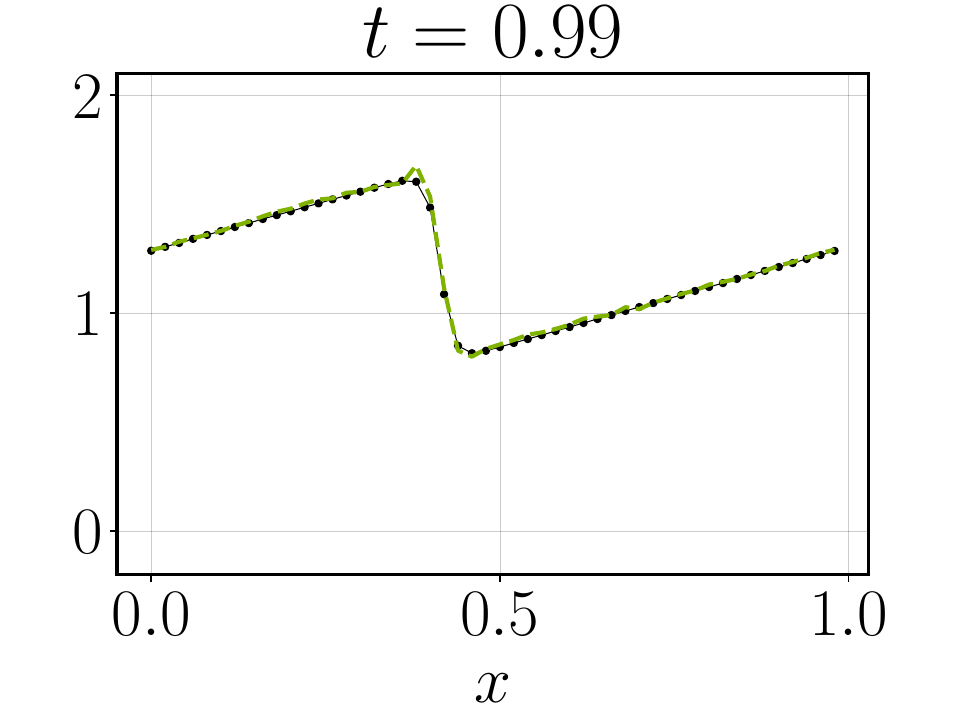}
    \end{minipage}
    \caption{\rev{$u_N$ and $\uut_N$ at three time steps for two values of $\mu$. This figure highlights the variability of the dataset and the strong agreement between the FOM and DRE solutions, despite the strong gradients.}}
    \label{fig:test_4_snaps}
\end{figure}
\begin{figure}[H]
    \centering
    \subfloat[Initial conditions]{\includegraphics[width=0.258\linewidth]{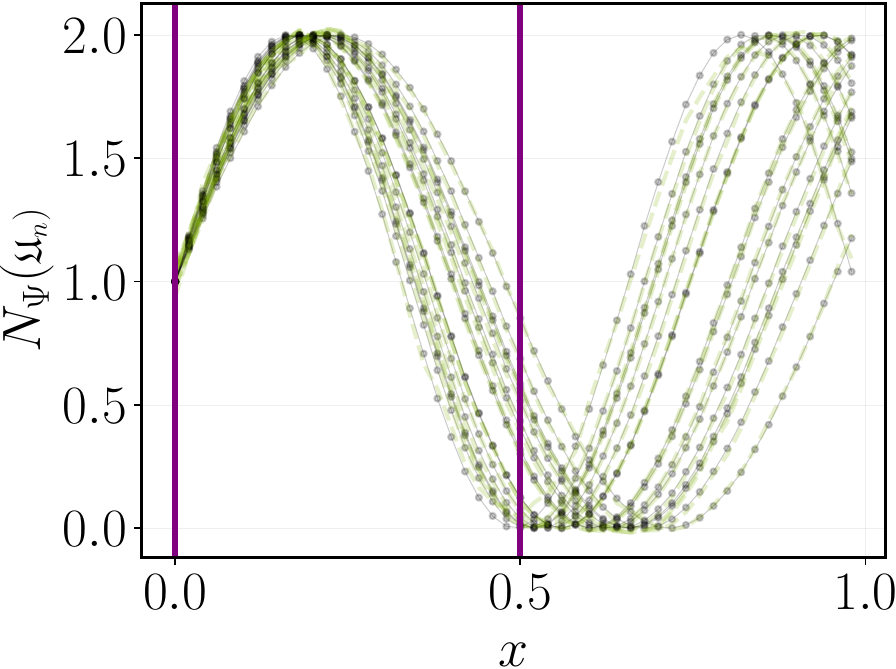}}\hspace*{3mm}
    \subfloat[{$\left.\uut_N\right|_{x=0}$}]{\includegraphics[width=0.245\linewidth]{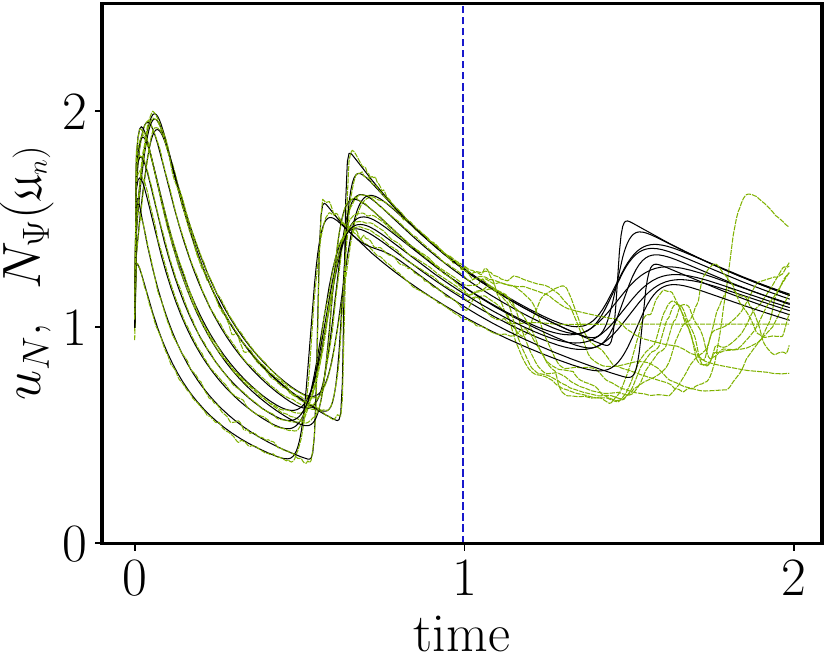}}\hspace*{3mm}
    \subfloat[{$\left.\uut_N\right|_{x=0.5}$} and error]{\includegraphics[width=0.3\linewidth]{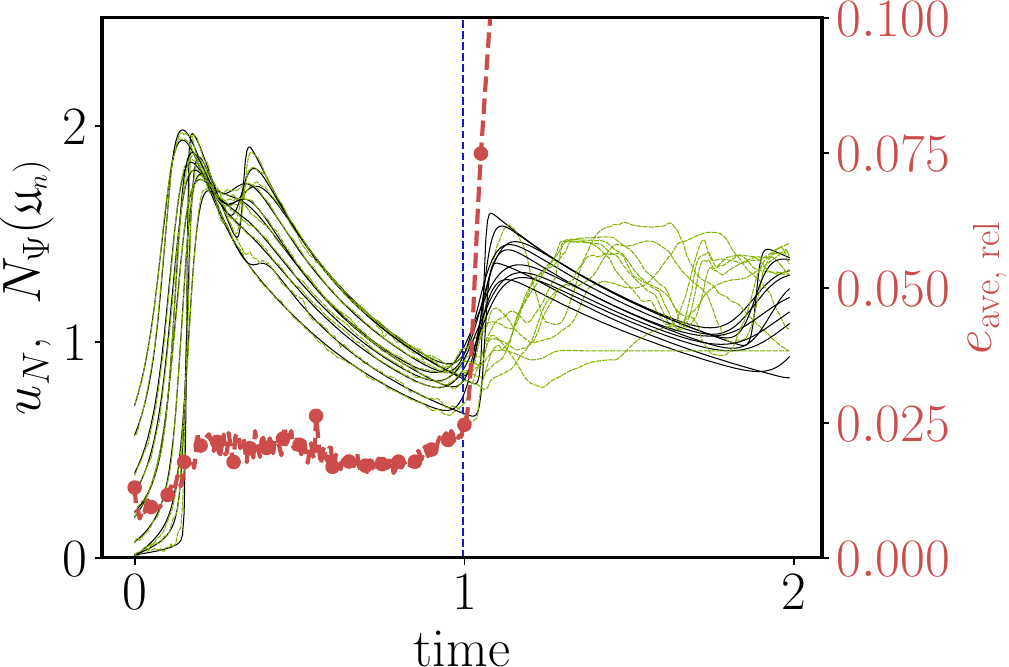}}\hspace*{2mm}
    \begin{minipage}{0.1\linewidth}
        \vspace{-45mm}
        \includegraphics[width=1\linewidth]{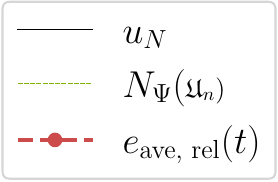}
    \end{minipage}
    \caption{\rev{Panel \textbf{(a)}: initial conditions for varying values of $\mu_2$ and, indicated by vertical purple lines, the components of $N_{\Psi}(\uut_N)$ displayed in the adjacent panels.
    	Panels \textbf{(b)} and \textbf{(c)}: time evolution of selected components of $u_N$. Solid black lines denote the reference solution obtained by accurately solving the \Node, while dashed green lines represent the reconstructed solution $N_{\Psi}(\uut_n)$. The two solutions show good visual agreement. The training data, and consequently the learned manifold, are restricted to the time interval $t \leq 1$, which is indicated by a vertical blue dashed line.
    	Panel \textbf{(c)}: the red dashed line indicates the relative error, as defined in \eqref{eq:case_chemistry_errs}.}}
    \label{fig:test_4_err}
\end{figure}

\subsection{Discussion on computational costs}

\rev{
In this section we comment on the computational cost of the reduced-order model and its different training strategies. Table~\ref{tab:costs} provides a concise overview of the main steps, from data generation to the evaluation of the reduced model.

In broad terms, the generation and use of the reduced-order model are organized into offline and online phases. The former includes data generation and network training, while the latter concerns the actual use of the reduced model. In general, the offline phase is the most expensive one, and the overall effectiveness of the reduced model strongly depends on the objective of the task at hand and, in particular, for how many new values of the parameters the reduced model will be evaluated. An example of a multi-query analysis is the gradient-free optimization procedure, where repeated evaluations of the model for a large number of parameter values are required \cite{Ballini2024}.

We begin by discussing some aspects of the offline phase. Since the method is data-driven, dataset generation is necessary. Its cost strongly depends on the specific problem under consideration and on the purpose of the analysis. When the problem of interest is a PDE, as in the example discussed in \Cref{sec:case_4_burgers}, the initial step consists in the construction of the \Node, that is, the discretization of the problem with respect to all variables except the time, which typically correspond to the spatial coordinates. This step can be particularly expensive, especially when the problem is non-affine, as this requires the reconstruction of the \Node\ for each parameter value; see, among others, \cite{Hesthaven2016, Ballini2024, Ballini2026a}.

Once the \Node\ has been constructed, it must be solved over the prescribed time interval and for a sufficiently large number of parameter samples in order to guarantee an adequate training of the neural networks. The number of samples is problem-dependent and is dictated by the desired accuracy of the results. Time integration must be performed sequentially, and each single timestep evaluation has a cost which depends on the numerical solver employed. A major influence on this cost arises from the choice between explicit and implicit schemes: the former are typically faster but require small timestep size for stability reasons, whereas the latter are generally more expensive per step but allow for larger timestep size. 

Based on the numerical tests presented in this section, the number of timesteps used to generate the full-order solutions appears to be abundant compared to those effectively required for accurately training the neural networks in the desired time range; indeed, a downsampling of the timesteps could be beneficial. Therefore, the reduced model does not seem to impose any limitation on the time evolution of the full-order model, as the settings used to properly solve the full-order model already provide sufficient data to accurately train the networks.

The construction of the reduced model proceeds with the training of the neural networks. The main factors influencing the training time are the desired accuracy, which is directly related to the number of epochs, and the network architecture (smaller networks generally leading to faster training), the optimization algorithm, and the chosen training strategy, namely semi-datadriven versus fully-datadriven. While the former offers the important advantage of accurately approximating $F_n$, enabling time super-resolution, it is intrinsically slower than the fully-datadriven approach. Indeed, in the semi-datadriven setting the autoencoder and the network $N_{F_n}$ must be trained in two distinct phases: first the autoencoder, and subsequently $N_{F_n}$, as detailed in \Cref{sec:train}. In practice, depending on the numerical cases presented, the semi-datadriven approach requires a computational time that is, roughly speaking, about twice that of the fully-datadriven one.

Once the offline phase is completed, the online phase involves comparatively inexpensive computations. In particular, it includes the time evolution of the \node, which is computationally cheap due to the small dimension $n$, even when small timesteps are employed. The time evolution starts from the initial condition, which is obtained by a feed-forward evaluation of the encoder network applied to $u_{N0}$. The high-dimensional solution is then reconstructed through feed-forward evaluations of the decoder network to obtain $\uut_N$ at all, or possibly only $\overline{m} \leq N_\text{times}$, timesteps. This task is trivially parallelizable, and its computational time is typically negligible when compared to the overall time.

Finally, we highlight that the computational cost of the networks is affected by the dimensions $N$ and $n$. In particular, the cost, $C$, of the feed-forward evaluation of a generic network scales as
$\textstyle C = \mathcal{O}\!\left(\sum_{\ell=1}^{L} n_{\ell} n_{\ell-1}\right)$.
For the encoder, we have $n_0 = N$ and, unless the network is linear, possibly some intermediate layers of comparable size. The final layer has dimension $n$. Similarly, for the decoder we have $n_L = N$, together with possibly other preceding layers of similar size. It follows that, for both the encoder and the decoder, the computational cost depends on both $N$ and $n$:
$C_\text{enc} = C_\text{enc}(N,n) = \mathcal{O}(N^2)$ (or $\mathcal{O}(Nn)$ in the linear case),
$C_\text{dec} = C_\text{dec}(N,n) = \mathcal{O}(N^2)$ (or $\mathcal{O}(Nn)$ in the linear case). \\
Conversely, the layers of $N_{F_n}$ depend only on the reduced dimension $n$ and the number of parameters, since $n_0 = n + a$ and $n_L = n$. Therefore, the cost associated with $N_{F_n}$ is
$C_{F_n} = C_{F_n}(n) = \mathcal{O}(n^2)$. \\
Since backpropagation has a computational cost of the same order of magnitude as the feed-forward evaluation, the training of the autoencoder is intrinsically limited by the problem size, scaling as $N^2$ (or $Nn$ in the linear case). This step therefore represents the most computationally expensive component of the proposed method. The reader is referred to Tab.~\ref{tab:costs} for an indication of the order of magnitude of the computational costs associated with each step involving the neural networks.
\begin{table}[H]
    \centering
    \begin{tabular}{l p{0.55\textwidth} p{0.2\textwidth} p{0.2\textwidth}}
        & \textbf{Common} & \textbf{Semi-datadriven} & \textbf{Fully-datadriven} \\
        \cline{2-4}
        \noalign{\vskip 3pt}
        %
        %
        \multirow{6}{*}{\rotatebox{90}{\textbf{offline}}}
        & $\bullet$ If PDE, space discretization and assembly of ODE.
        & & \\
        & $\bullet$ One timestep evolution of \Node\ using explicit or implicit method.
        & \multirow{5}{=}{\raggedright%
            $\bullet$ Training $N_{\Psi'}$, $N_{\Psi}$.
            Cost $=\mathcal{O}(n_e N^2)$. \\
            $\bullet$ Training $N_{F_n}$. Cost $=\mathcal{O}(n_e N^2)$.}
        & \multirow{4}{=}{\raggedright%
            $\bullet$ Training $N_{\Psi'}$, $N_{\Psi}$, and $N_{F_n}$ concurrently.
            Cost $=\mathcal{O}(n_e N^2)$.} \\
        & $\bullet$ Repeat the previous point for $N_\text{time}$ times.
        & & \\
        & $\bullet$ Repeat for every parameter. If the problem is non-affine, repeat assembly of the ODE and following steps for each parameter.
        & & \\
        & & & \\[4pt]
        %
        %
        \multirow{3}{*}{\rotatebox{90}{\textbf{online}}}
        & $\bullet$ Reduction of initial condition:
          $\uut_{n0} = N_{\Psi'}(u_{N0})$. Cost $=\mathcal{O}(N^2)$.
        & & \\
        & $\bullet$ Evolve in time \node. Cost $=\mathcal{O}(P N_\text{time} n^2)$.
        & & \\
        & $\bullet$ Reconstruction $\uut_N = N_{\Psi}(\uut_n)$
          for preselected $\overline{m}$ timesteps. Cost $=\mathcal{O}(\overline{m} N^2)$.
        & & \\
    \end{tabular}
    \caption{Main steps for the offline and online phases with associated
             computational costs. The number of training epochs is $n_e$. We recall that $P$ is the number of steps of the multistep methods utilized.}
    \label{tab:costs}
\end{table}
}

\section{Conclusions}
In this work, we have presented a data-driven reduced-order modeling method entirely based on neural networks. The methodology can be applied to generic ODEs, such as those resulting from the spatial discretization of PDEs \rev{as presented in the last test case in \Cref{sec:case_4_burgers}}. 

Several contributions have been made. First, we proved that the set of solutions corresponding to varying parameters forms a $m$-manifold, which can be parametrized using $n \geq m$ coordinates. We also established a theoretical link between the number of parameters and the minimum latent dimension required for an exact representation.

Second, we provided guidance on the construction of an autoencoder composed of fully connected neural networks, ensuring that no information is lost during the dimensionality reduction phase, i.e., achieving $\Varepsilon_1 = 0$. 

Third, we identified the \node\ that governs the reduced dynamics and proved \rev{its} well-posedness. This system can be solved rapidly in place of the original \Node, which is potentially expensive. The relationship between these two systems requires particular treatment, and, consequently, we focused on establishing conditions for stability and convergence such as $\Delta t_\text{train}, \Delta t \to 0$ and $|\myw| \to \infty$, accounting for both time discretization errors and neural network approximation errors.

Fourth, we proposed and analyzed two training strategies, each with distinct strengths and limitations. It is worth highlighting that the semi-data-driven approach enables fine temporal resolution after training, i.e., it allows for $\Delta t < \Delta t_\text{train}$.

The methodology was assessed in \rev{four} test cases: one illustrates stability properties, one shows convergence over time, one evaluates performance in a general application setting, \rev{and the last shows an application to a highly nonlinear PDE resulting in strongly nonlinear solution manifold}. The numerical results are in agreement with the theoretical findings. In particular, due to the nonlinear nature of both the encoder and decoder, the method is capable of accurately reconstructing the nonlinear $N$-solution by solving the smallest ODE that allows for a null representation error.

The solid theoretical foundation and promising numerical performance justify the continuation of the research of the presented framework. In the following, we mention some possible future research direction and limitations. We have shown that the injectivity of the solution map is crucial for the representation of $\id_\mathcal{M}$ and that, if absent, it can be recovered by augmenting the data. This observation motivates future investigation of complex manifolds arising from non-injective parameter-to-solution maps.

Moreover, given the analytical complexity of the full stability theory, we have restricted our analysis to relatively simple contexts. An extension of the stability results to more general settings, such as linear multistep or Runge–Kutta schemes, and the definition of the relation autoencoder vs. stability appears particularly interesting. Finally, given the effectiveness of the method, it is interesting to apply it to realistic and engineering-relevant scenarios (see, e.g., \cite{Ballini2025}) \rev{where it is possible to properly assess the effectiveness of the method in terms of computational cost and speedup}.

\paragraph{Declaration of competing interest}
The authors declare that they have no known competing financial interests or personal relationships that could have appeared to influence the work reported in this paper.

\paragraph{Data availability}
\rev{The code is publicly available at
\url{https://github.com/enricoballini/dynamical_reduced_embedding.git}}

\paragraph{Acknowledgements}
Enrico Ballini and Paolo Zunino acknowledge the partial support the European Union’s Euratom research and training programme 2021–2027 under grant agreement No. 101166699 \textit{Radiation safety through biological extended models and digital twins} (TETRIS).
This research is part of the activities of the \textit{Dipartimento di Eccellenza} 2023--2027, Department of Mathematics, Politecnico di Milano.  
Enrico Ballini, Alessio Fumagalli, Luca Formaggia, Anna Scotti, and Paolo Zunino are members of the INdAM Research group GNCS.


\bibliographystyle{abbrv}
\bibliography{all}

@Book{Quarteroni2016,
  author    = {Alfio Quarteroni and Andrea Manzoni and Federico Negri},
  publisher = {Springer International Publishing},
  title     = {Reduced Basis Methods for Partial Differential Equations},
  year      = {2016},
  doi       = {10.1007/978-3-319-15431-2},
  file      = {:ROM/books/quarteroniManzoniNegri2016.pdf:PDF},
  groups    = {paper_1},
}

@Book{Hesthaven2016,
  author    = {Jan S Hesthaven and Gianluigi Rozza and Benjamin Stamm},
  publisher = {Springer International Publishing},
  title     = {Certified Reduced Basis Methods for Parametrized Partial Differential Equations},
  year      = {2016},
  doi       = {10.1007/978-3-319-22470-1},
  file      = {:ROM/books/hesthaven2016.pdf:PDF},
  groups    = {paper_1},
}

@Book{Goodfellow2017,
  author    = {Goodfellow, Ian and Bengio, Yoshua and Courville, Aaron},
  publisher = {MIT Press},
  title     = {Deep Learning},
  year      = {2017},
  isbn      = {9780262035613},
  file      = {:ml/Goodfellow Bengio Courville - Deep Learning(2016).pdf:PDF},
}

@Article{Fresca2021,
  author    = {Stefania Fresca and Luca Dede' and Andrea Manzoni},
  journal   = {Journal of Scientific Computing},
  title     = {A Comprehensive Deep Learning-Based Approach to Reduced Order Modeling of Nonlinear Time-Dependent Parametrized {PDEs}},
  year      = {2021},
  number    = {2},
  volume    = {87},
  doi       = {10.1007/s10915-021-01462-7},
  file      = {:ROM/non-linear/2021_A Comprehensive Deep Learning-Based Approach to Reduced Order Modeling .pdf:PDF},
  groups    = {paper_1},
  publisher = {Springer Science and Business Media {LLC}},
}

@Article{Fresca2022,
  author    = {Stefania Fresca and Andrea Manzoni},
  journal   = {Computer Methods in Applied Mechanics and Engineering},
  title     = {{POD}-{DL}-{ROM}: Enhancing deep learning-based reduced order models for nonlinear parametrized {PDEs} by proper orthogonal decomposition},
  year      = {2022},
  pages     = {114181},
  volume    = {388},
  doi       = {10.1016/j.cma.2021.114181},
  file      = {:ROM/non-linear/2021_POD-DL-ROM_ Enhancing deep learning.pdf:PDF},
  groups    = {paper_1},
  publisher = {Elsevier {BV}},
}

@Article{Kingma2014,
  author    = {Kingma, Diederik P. and Ba, Jimmy},
  journal   = {arXiv:1412.6980},
  title     = {Adam: A Method for Stochastic Optimization},
  year      = {2014},
  copyright = {arXiv.org perpetual, non-exclusive license},
  doi       = {10.48550/ARXIV.1412.6980},
  file      = {:ml/2014_Adam\: A Method for Stochastic Optimization.pdf:PDF;:ml/2014_Adam_ A Method for Stochastic Optimization.pdf:PDF},
  groups    = {paper_1},
  keywords  = {Machine Learning (cs.LG), FOS: Computer and information sciences, FOS: Computer and information sciences},
  publisher = {arXiv},
}

@Article{Virtanen2020,
  author    = {Pauli Virtanen and Ralf Gommers and Travis E. Oliphant and others},
  journal   = {Nature Methods},
  title     = {{SciPy} 1.0: fundamental algorithms for scientific computing in {Python}},
  year      = {2020},
  number    = {3},
  pages     = {261--272},
  volume    = {17},
  doi       = {10.1038/s41592-019-0686-2},
  groups    = {paper_1},
  publisher = {Springer Science and Business Media {LLC}},
}

@Book{Nordbotten2012,
  author    = {Nordbotten, Jan M. and Celia, Michael Anthony},
  publisher = {Wiley},
  title     = {Geological storage of {$CO_2$} modeling approaches for large-scale simulation},
  year      = {2012},
  isbn      = {9780470889466},
  file      = {:poromechanics-geoscience/Nordbotten_Geological storage of co2_2011.pdf:PDF},
  groups    = {paper_1},
  subtitle  = {modeling approaches for large-scale simulation},
}

@Article{Franco2022,
  author    = {Nicola Rares Franco and Andrea Manzoni and Paolo Zunino},
  journal   = {Mathematics of Computation},
  title     = {A deep learning approach to Reduced Order Modelling of parameter dependent partial differential equations},
  year      = {2022},
  number    = {340},
  pages     = {483--524},
  volume    = {92},
  doi       = {10.1090/mcom/3781},
  file      = {:ROM/non-linear/2021_A DEEP LEARNING APPROACH TO REDUCED ORDER.pdf:PDF;:ROM/non-linear/2023_A DEEP LEARNING APPROACH TO REDUCED ORDER.pdf:PDF},
  groups    = {paper_1},
  publisher = {American Mathematical Society ({AMS})},
}

@Article{Franco2023a,
  author    = {Nicola Rares Franco and Andrea Manzoni and Paolo Zunino},
  journal   = {Journal of Scientific Computing},
  title     = {Mesh-Informed Neural Networks for Operator Learning in Finite Element Spaces},
  year      = {2023},
  number    = {2},
  volume    = {97},
  doi       = {10.1007/s10915-023-02331-1},
  file      = {:ROM/non-linear/2023_Mesh-Informed Neural Networks for Operator Learning in Finite Element Spaces.pdf:PDF},
  groups    = {paper_1},
  publisher = {Springer Science and Business Media {LLC}},
}

@Article{Hasegawa2020,
  author    = {Kazuto Hasegawa and Kai Fukami and Takaaki Murata and Koji Fukagata},
  journal   = {Theoretical and Computational Fluid Dynamics},
  title     = {Machine-learning-based reduced-order modeling for unsteady flows around bluff bodies of various shapes},
  year      = {2020},
  number    = {4},
  pages     = {367--383},
  volume    = {34},
  doi       = {10.1007/s00162-020-00528-w},
  file      = {:ROM/non-linear/2020-Machine-learning-based reduced-order modeling for unsteady flows around bluff.pdf:PDF},
  groups    = {paper_1},
  publisher = {Springer Science and Business Media {LLC}},
}

@Article{Gonzalez2018,
  author    = {Gonzalez, Francisco J. and Balajewicz, Maciej},
  journal   = {arXiv:1808.01346},
  title     = {Deep convolutional recurrent autoencoders for learning low-dimensional feature dynamics of fluid systems},
  year      = {2018},
  copyright = {arXiv.org perpetual, non-exclusive license},
  doi       = {10.48550/ARXIV.1808.01346},
  file      = {:ROM/non-linear/2018_Deep convolutional recurrent autoencoders for learning low-dimensional feature dynamics of fluid systems.pdf:PDF},
  groups    = {paper_1},
  keywords  = {Dynamical Systems (math.DS), Machine Learning (cs.LG), Fluid Dynamics (physics.flu-dyn), FOS: Mathematics, FOS: Mathematics, FOS: Computer and information sciences, FOS: Computer and information sciences, FOS: Physical sciences, FOS: Physical sciences},
}

@Article{Murata2019,
  author    = {Takaaki Murata and Kai Fukami and Koji Fukagata},
  journal   = {Journal of Fluid Mechanics},
  title     = {Nonlinear mode decomposition with convolutional neural networks for fluid~dynamics},
  year      = {2019},
  volume    = {882},
  doi       = {10.1017/jfm.2019.822},
  file      = {:ROM/non-linear/2020_nonlinear-mode-decomposition-with-convolutional-neural-networks-for-fluid-dynamics.pdf:PDF},
  groups    = {paper_1},
  publisher = {Cambridge University Press ({CUP})},
}

@Article{Fresca2023,
  author    = {Fresca, Stefania and Fatone, Federico and Manzoni, Andrea},
  journal   = {Mathematics in Engineering},
  title     = {Long-time prediction of nonlinear parametrized dynamical systems by deep learning-based reduced order models},
  year      = {2023},
  issn      = {2640-3501},
  number    = {6},
  pages     = {1--36},
  volume    = {5},
  doi       = {10.3934/mine.2023096},
  file      = {:ROM/non-linear/2023_Long-time prediction of nonlinear parametrized dynamical systems by deep learning-based reduced order models.pdf:PDF},
  publisher = {American Institute of Mathematical Sciences (AIMS)},
}

@Article{Fu2023,
  author    = {Fu, R. and Xiao, D. and Navon, I.M. and Fang, F. and Yang, L. and Wang, C. and Cheng, S.},
  journal   = {International Journal for Numerical Methods in Engineering},
  title     = {A non-linear non-intrusive reduced order model of fluid flow by auto-encoder and self-attention deep learning methods},
  year      = {2023},
  issn      = {1097-0207},
  number    = {13},
  pages     = {3087--3111},
  volume    = {124},
  doi       = {10.1002/nme.7240},
  file      = {:ROM/non-linear/2023_A non-linear non-intrusive reduced order model of fluid flow by auto-encoder and self-attention deep learning methods.pdf:PDF},
  publisher = {Wiley},
}

@Article{Du2022,
  author    = {Du, Qiang and Gu, Yiqi and Yang, Haizhao and Zhou, Chao},
  journal   = {SIAM Journal on Numerical Analysis},
  title     = {The Discovery of Dynamics via Linear Multistep Methods and Deep Learning: Error Estimation},
  year      = {2022},
  issn      = {1095-7170},
  number    = {4},
  pages     = {2014--2045},
  volume    = {60},
  doi       = {10.1137/21m140691x},
  file      = {:ml/multistep/2022_THE DISCOVERY OF DYNAMICS VIA LINEAR MULTISTEPMETHODS AND DEEP LEARNING_ ERROR ESTIMATION.pdf:PDF},
  publisher = {Society for Industrial & Applied Mathematics (SIAM)},
}

@Article{Gupta2022,
  author    = {Gupta, Rachit and Jaiman, Rajeev},
  journal   = {Physics of Fluids},
  title     = {Three-dimensional deep learning-based reduced order model for unsteady flow dynamics with variable Reynolds number},
  year      = {2022},
  issn      = {1089-7666},
  number    = {3},
  volume    = {34},
  doi       = {10.1063/5.0082741},
  file      = {:ROM/non-linear/2022_Three-dimensional deep learning-based reduced order model for unsteady flow dynamics with variable Reynolds numbe.pdf:PDF},
  publisher = {AIP Publishing},
}

@Article{Kadeethum2022,
  author    = {Kadeethum, T. and Ballarin, F. and Choi, Y. and O'Malley, D. and Yoon, H. and Bouklas, N.},
  journal   = {Advances in Water Resources},
  title     = {Non-intrusive reduced order modeling of natural convection in porous media using convolutional autoencoders: Comparison with linear subspace techniques},
  year      = {2022},
  issn      = {0309-1708},
  pages     = {104098},
  volume    = {160},
  doi       = {10.1016/j.advwatres.2021.104098},
  file      = {:ROM/porous/2022_Non-intrusive reduced order modeling of natural convection in porous media using convolutional autoencoders\: Comparison.pdf:PDF},
  publisher = {Elsevier BV},
}

@Article{Conti2024,
  author    = {Conti, Paolo and Guo, Mengwu and Manzoni, Andrea and Frangi, Attilio and Brunton, Steven L. and Nathan Kutz, J.},
  journal   = {Proceedings of the Royal Society A: Mathematical, Physical and Engineering Sciences},
  title     = {Multi-fidelity reduced-order surrogate modelling},
  year      = {2024},
  issn      = {1471-2946},
  number    = {2283},
  volume    = {480},
  doi       = {10.1098/rspa.2023.0655},
  file      = {:ROM/non-linear/2023_Multi-fidelity surrogate modeling using long short-term memory networks.pdf:PDF},
  publisher = {The Royal Society},
}

@Article{Kumar2021,
  author    = {Kumar, Anil and Hu, Roger and Walsh, Stuart D. C.},
  journal   = {Rock Mechanics and Rock Engineering},
  title     = {Development of Reduced Order Hydro-mechanical Models of Fractured Media},
  year      = {2021},
  issn      = {1434-453X},
  number    = {1},
  pages     = {235--248},
  volume    = {55},
  doi       = {10.1007/s00603-021-02668-9},
  file      = {:ROM/non-linear/2022-Development of Reduced Order Hydro-mechanical Models of Fractured Media.pdf:PDF},
  publisher = {Springer Science and Business Media LLC},
}

@Article{Lee2020,
  author    = {Lee, Kookjin and Carlberg, Kevin T.},
  journal   = {Journal of Computational Physics},
  title     = {Model reduction of dynamical systems on nonlinear manifolds using deep convolutional autoencoders},
  year      = {2020},
  issn      = {0021-9991},
  pages     = {108973},
  volume    = {404},
  doi       = {10.1016/j.jcp.2019.108973},
  file      = {:ROM/non-linear/2020_Model reduction of dynamical systems on nonlinear manifolds using deep convolutional autoencoder.pdf:PDF},
  publisher = {Elsevier BV},
}

@Article{Pant2021,
  author    = {Pant, Pranshu and Doshi, Ruchit and Bahl, Pranav and Barati Farimani, Amir},
  journal   = {Physics of Fluids},
  title     = {Deep learning for reduced order modelling and efficient temporal evolution of fluid simulations},
  year      = {2021},
  issn      = {1089-7666},
  number    = {10},
  volume    = {33},
  doi       = {10.1063/5.0062546},
  file      = {:ROM/non-linear/2021_Deep learning for reduced order modelling and efficient temporal evolution of fluid simulations.pdf:PDF},
  publisher = {AIP Publishing},
}

@Article{Franco2023b,
  author    = {Franco, Nicola Rares and Fresca, Stefania and Tombari, Filippo and Manzoni, Andrea},
  journal   = {Chaos: An Interdisciplinary Journal of Nonlinear Science},
  title     = {Deep learning-based surrogate models for parametrized PDEs: Handling geometric variability through graph neural networks},
  year      = {2023},
  issn      = {1089-7682},
  number    = {12},
  volume    = {33},
  doi       = {10.1063/5.0170101},
  file      = {:ROM/non-linear/2023_Deep learning-based surrogate models for parametrized PDEs\: Handling geometric variability through graph neural networks.pdf:PDF},
  publisher = {AIP Publishing},
}

@InProceedings{Ohlberger2016,
  booktitle = {Proceedings of the Conference Algoritmy},
  title     = {Reduced basis methods: success, limitations and future challenges},
  year      = {2016},
  pages     = {1-12},
  authors   = {Mario Ohlberger and Sthephan Rave},
}

@Article{Fumagalli2021,
  author    = {Fumagalli, Alessio and Scotti, Anna},
  journal   = {Journal of Computational Physics},
  title     = {A mathematical model for thermal single-phase flow and reactive transport in fractured porous media},
  year      = {2021},
  issn      = {0021-9991},
  pages     = {110205},
  volume    = {434},
  doi       = {10.1016/j.jcp.2021.110205},
  file      = {:numerics-poro-numerics/2021_A mathematical model for thermal single-phase flow and reactive transport in.pdf:PDF},
  publisher = {Elsevier BV},
}

@InBook{Ronneberger2015,
  author    = {Ronneberger, Olaf and Fischer, Philipp and Brox, Thomas},
  pages     = {234--241},
  publisher = {Springer International Publishing},
  title     = {U-Net: Convolutional Networks for Biomedical Image Segmentation},
  year      = {2015},
  isbn      = {9783319245744},
  booktitle = {Medical Image Computing and Computer-Assisted Intervention - MICCAI 2015},
  doi       = {10.1007/978-3-319-24574-4\_28},
  file      = {:ml/U-Net\: Convolutional Networks for Biomedical Image Segmentation.pdf:PDF},
  issn      = {1611-3349},
}

@Book{Thompson1988,
  author    = {Philip A. Thompson},
  publisher = {McGraw-Hill, Inc.},
  title     = {Compressible-Fluid Dynamics},
  year      = {1988},
  series    = {Advanced engineering series},
  file      = {:books/1 Fluid/THOMPSON -Compressible-fluid dynamics (1988).pdf:PDF},
}

@Article{Lu2021,
  author    = {Lu, Lu and Jin, Pengzhan and Pang, Guofei and Zhang, Zhongqiang and Karniadakis, George Em},
  journal   = {Nature Machine Intelligence},
  title     = {Learning nonlinear operators via DeepONet based on the universal approximation theorem of operators},
  year      = {2021},
  issn      = {2522-5839},
  month     = mar,
  number    = {3},
  pages     = {218--229},
  volume    = {3},
  doi       = {10.1038/s42256-021-00302-5},
  file      = {:ml/DeepONet/2021-Learning nonlinear operators via DeepONet based on the universal approximation theorem of operators.pdf:PDF},
  publisher = {Springer Science and Business Media LLC},
}

@Article{Li2020b,
  author    = {Li, Zongyi and Kovachki, Nikola and Azizzadenesheli, Kamyar and Liu, Burigede and Bhattacharya, Kaushik and Stuart, Andrew and Anandkumar, Anima},
  journal   = {arXiv:2010.08895},
  title     = {Fourier Neural Operator for Parametric Partial Differential Equations},
  year      = {2020},
  copyright = {arXiv.org perpetual, non-exclusive license},
  doi       = {10.48550/ARXIV.2010.08895},
  file      = {:ml/neural-operator/2021_Fourier neural operator for.pdf:PDF},
  keywords  = {Machine Learning (cs.LG), Numerical Analysis (math.NA), FOS: Computer and information sciences, FOS: Computer and information sciences, FOS: Mathematics, FOS: Mathematics},
  publisher = {arXiv:2010.08895},
}

@Article{Raissi2019,
  author    = {Raissi, M. and Perdikaris, P. and Karniadakis, G.E.},
  journal   = {Journal of Computational Physics},
  title     = {Physics-informed neural networks: A deep learning framework for solving forward and inverse problems involving nonlinear partial differential equations},
  year      = {2019},
  issn      = {0021-9991},
  pages     = {686--707},
  volume    = {378},
  doi       = {10.1016/j.jcp.2018.10.045},
  file      = {:ml/pinn/2019_Physics-informed neural networks A deep learning framework for solving forward and inverse problems involving nonl.pdf:PDF},
  publisher = {Elsevier BV},
}

@Article{Ballini2024,
  author    = {Ballini, Enrico and Formaggia, Luca and Fumagalli, Alessio and Scotti, Anna and Zunino, Paolo},
  journal   = {Computational Geosciences},
  title     = {Application of deep learning reduced-order modeling for single-phase flow in faulted porous media},
  year      = {2024},
  issn      = {1573-1499},
  doi       = {10.1007/s10596-024-10320-y},
  file      = {:numerics-poro-numerics/2024_Application of deep learning reduced-order modeling for single-phase flow in faulted porous media.pdf:PDF},
  publisher = {Springer Science and Business Media LLC},
}

@InProceedings{Pytorch,
  author    = {Ansel, Jason and Yang, Edward and He, Horace and others},
  booktitle = {29th ACM International Conference on Architectural Support for Programming Languages and Operating Systems, Volume 2 (ASPLOS '24)},
  title     = {{PyTorch 2: Faster Machine Learning Through Dynamic Python Bytecode Transformation and Graph Compilation}},
  year      = {2024},
  publisher = {ACM},
  doi       = {10.1145/3620665.3640366},
  url       = {https://pytorch.org/assets/pytorch2-2.pdf},
}

@Article{Regazzoni2024,
  author    = {Regazzoni, Francesco and Pagani, Stefano and Salvador, Matteo and Dede', Luca and Quarteroni, Alfio},
  journal   = {Nature Communications},
  title     = {Learning the intrinsic dynamics of spatio-temporal processes through Latent Dynamics Networks},
  year      = {2024},
  issn      = {2041-1723},
  month     = feb,
  number    = {1},
  volume    = {15},
  doi       = {10.1038/s41467-024-45323-x},
  file      = {:ROM/non-linear/2024_Learning the intrinsic dynamics of spatio-temporal processes through Latent Dynamics Networks.pdf:PDF},
  publisher = {Springer Science and Business Media LLC},
}

@Article{Guehring2021,
  author    = {Gühring, Ingo and Raslan, Mones},
  journal   = {Neural Networks},
  title     = {Approximation rates for neural networks with encodable weights in smoothness spaces},
  year      = {2021},
  issn      = {0893-6080},
  month     = feb,
  pages     = {107--130},
  volume    = {134},
  doi       = {10.1016/j.neunet.2020.11.010},
  file      = {:ml/approxth/2021_Approximation rates for neural networks with encodable weights in smoothness spaces.pdf:PDF},
  publisher = {Elsevier BV},
}

@Article{Tripura2023,
  author    = {Tripura, Tapas and Chakraborty, Souvik},
  journal   = {Computer Methods in Applied Mechanics and Engineering},
  title     = {Wavelet Neural Operator for solving parametric partial differential equations in computational mechanics problems},
  year      = {2023},
  issn      = {0045-7825},
  month     = feb,
  pages     = {115783},
  volume    = {404},
  doi       = {10.1016/j.cma.2022.115783},
  file      = {:ml/neural-operator/2023_Wavelet Neural Operator for solving parametric partial differential equations in computational mechanics problems.pdf:PDF},
  publisher = {Elsevier BV},
}

@Article{Park2020,
  author    = {Park, Sejun and Yun, Chulhee and Lee, Jaeho and Shin, Jinwoo},
  journal   = {arXiv:2006.08859},
  title     = {Minimum Width for Universal Approximation},
  year      = {2020},
  copyright = {arXiv.org perpetual, non-exclusive license},
  doi       = {10.48550/ARXIV.2006.08859},
  file      = {:ml/approxth/2021_Minimum Width for Universal Approximation.pdf:PDF},
  keywords  = {Machine Learning (cs.LG), Machine Learning (stat.ML), FOS: Computer and information sciences, FOS: Computer and information sciences},
  publisher = {arXiv},
}

@Misc{Cai2022,
  author    = {Cai, Yongqiang},
  note      = {arXiv:2209.11395},
  title     = {Achieve the Minimum Width of Neural Networks for Universal Approximation},
  year      = {2022},
  copyright = {arXiv.org perpetual, non-exclusive license},
  doi       = {10.48550/ARXIV.2209.11395},
  file      = {:ml/approxth/2023_Achieve the Minimum Width of Neural Networks for Universal Approximation.pdf:PDF},
  keywords  = {Machine Learning (cs.LG), FOS: Computer and information sciences, FOS: Computer and information sciences},
  publisher = {arXiv},
}

@Article{Lu2021a,
  author    = {Lu, Jianfeng and Shen, Zuowei and Yang, Haizhao and Zhang, Shijun},
  journal   = {SIAM Journal on Mathematical Analysis},
  title     = {Deep Network Approximation for Smooth Functions},
  year      = {2021},
  issn      = {1095-7154},
  month     = jan,
  number    = {5},
  pages     = {5465--5506},
  volume    = {53},
  doi       = {10.1137/20m134695x},
  file      = {:ml/approxth/2021_Deep Network Approximation for Smooth Functions.pdf:PDF},
  publisher = {Society for Industrial & Applied Mathematics (SIAM)},
}

@InProceedings{Li2024,
  author     = {Li, Fan and Zhang, Yayun and Xiao, Cong},
  booktitle  = {58th U.S. Rock Mechanics/Geomechanics Symposium},
  title      = {Surrogate-Based Hydraulic Fracture Propagation Prediction Using Deep Neural Network Proxy},
  year       = {2024},
  month      = jun,
  publisher  = {ARMA},
  series     = {ARMA24},
  collection = {ARMA24},
  doi        = {10.56952/arma-2024-0056},
  file       = {:numerics-poro-numerics/2024_Surrogate-Based Hydraulic Fracture Propagation Prediction Using Deep Neural Network Proxy.pdf:PDF},
}

@InProceedings{SabziShahrebabaki2022,
  author    = {Sabzi Shahrebabaki, A. and Fouladi, S.H. and Holtar, E. and Vynnytska, L.},
  booktitle = {Second EAGE Digitalization Conference and Exhibition},
  title     = {Autoencoder-based generation of subsurface models},
  year      = {2022},
  pages     = {1--5},
  publisher = {European Association of Geoscientists \& Engineers},
  doi       = {10.3997/2214-4609.202239082},
  file      = {:numerics-poro-numerics/2022_Autoencoder-based generation of subsurface models.pdf:PDF},
}

@Book{Butcher2016,
  author    = {Butcher, J. C.},
  publisher = {Wiley},
  title     = {Numerical Methods for Ordinary Differential Equations},
  year      = {2016},
  isbn      = {9781119121534},
  month     = jul,
  doi       = {10.1002/9781119121534},
  file      = {:mathematics/Butcher_Numerical Methods for Ordinary Differential Equations.pdf:PDF},
}

@Book{Hairer1996,
  author    = {Hairer, Ernst and Wanner, Gerhard},
  publisher = {Springer Berlin Heidelberg},
  title     = {Solving Ordinary Differential Equations II},
  year      = {1996},
  isbn      = {9783642052217},
  doi       = {10.1007/978-3-642-05221-7},
  file      = {:mathematics/hairer_Solving Ordinary Differential Equations II.pdf:PDF},
  issn      = {0179-3632},
  journal   = {Springer Series in Computational Mathematics},
}

@Book{Quarteroni2007,
  author    = {Quarteroni, Alfio and Sacco, Riccardo and Saleri, Fausto},
  publisher = {Springer},
  title     = {Numerical Mathematics},
  year      = {2007},
  address   = {Berlin},
  edition   = {Second edition},
  isbn      = {3540346589},
  note      = {Literaturverzeichnis: Seite 635-644},
  number    = {37},
  series    = {Texts in Applied Mathematics},
  file      = {:books/2 Numerical/quarteroni_sacco_saleri_numerical_mathematics.pdf:PDF},
  pagetotal = {655},
  ppn_gvk   = {516441272},
}

@InProceedings{Chen2018,
  author    = {Chen, Ricky T. Q. and Rubanova, Yulia and Bettencourt, Jesse and Duvenaud, David K},
  booktitle = {Advances in Neural Information Processing Systems},
  title     = {Neural Ordinary Differential Equations},
  year      = {2018},
  editor    = {S. Bengio and H. Wallach and H. Larochelle and K. Grauman and N. Cesa-Bianchi and R. Garnett},
  publisher = {Curran Associates, Inc.},
  volume    = {31},
  file      = {:ml/ode-ode-related/2018_Neural Ordinary Differential Equations.pdf:PDF;:ml/ode-ode-related/2018_Neural Ordinary Differential Equations_supplement material.pdf:PDF},
  url       = {https://proceedings.neurips.cc/paper_files/paper/2018/file/69386f6bb1dfed68692a24c8686939b9-Paper.pdf},
}

@Article{Legaard2023,
  author    = {Legaard, Christian and Schranz, Thomas and Schweiger, Gerald and Drgoňa, Ján and Falay, Basak and Gomes, Cláudio and Iosifidis, Alexandros and Abkar, Mahdi and Larsen, Peter},
  journal   = {ACM Computing Surveys},
  title     = {Constructing Neural Network Based Models for Simulating Dynamical Systems},
  year      = {2023},
  issn      = {1557-7341},
  month     = feb,
  number    = {11},
  pages     = {1--34},
  volume    = {55},
  doi       = {10.1145/3567591},
  file      = {:ml/ode-ode-related/2023_Constructing Neural Network Based Models for Simulating Dynamical Systems.pdf:PDF},
  publisher = {Association for Computing Machinery (ACM)},
}

@Article{Li2025,
  author    = {Li, Zijie and Patil, Saurabh and Ogoke, Francis and Shu, Dule and Zhen, Wilson and Schneier, Michael and Buchanan, John R. and Barati Farimani, Amir},
  journal   = {Journal of Computational Physics},
  title     = {Latent neural PDE solver: A reduced-order modeling framework for partial differential equations},
  year      = {2025},
  issn      = {0021-9991},
  month     = mar,
  pages     = {113705},
  volume    = {524},
  doi       = {10.1016/j.jcp.2024.113705},
  file      = {:ml/ode-ode-related/2025_Latent neural PDE solver\: A reduced-order modeling framework for partial differential equations.pdf:PDF},
  publisher = {Elsevier BV},
}

@InProceedings{Yiping2018,
  author    = {Lu, Yiping and Zhong, Aoxiao and Li, Quanzheng and Dong, Bin},
  booktitle = {Proceedings of the 35th International Conference on Machine Learning},
  title     = {Beyond Finite Layer Neural Networks: Bridging Deep Architectures and Numerical Differential Equations},
  year      = {2018},
  editor    = {Dy, Jennifer and Krause, Andreas},
  month     = {10--15 Jul},
  pages     = {3276--3285},
  publisher = {PMLR},
  series    = {Proceedings of Machine Learning Research},
  volume    = {80},
  file      = {:ml/ode-ode-related/2018_Beyond Finite Layer Neural Networks.pdf:PDF},
  pdf       = {http://proceedings.mlr.press/v80/lu18d/lu18d.pdf},
  url       = {https://proceedings.mlr.press/v80/lu18d.html},
}

@InBook{Rubanova2019,
  author    = {Rubanova, Yulia and Chen, Ricky T. Q. and Duvenaud, David},
  publisher = {Curran Associates Inc.},
  title     = {Latent ODEs for irregularly-sampled time series},
  year      = {2019},
  address   = {Red Hook, NY, USA},
  articleno = {478},
  booktitle = {Proceedings of the 33rd International Conference on Neural Information Processing Systems},
  file      = {:ml/ode-ode-related/2019_Latent ODEs for Irregularly-Sampled Time Series.pdf:PDF},
  numpages  = {11},
}

@InProceedings{Peter2023,
  author    = {Peter Yichen Chen and Jinxu Xiang and Dong Heon Cho and Yue Chang and G A Pershing and Henrique Teles Maia and Maurizio M Chiaramonte and Kevin Thomas Carlberg and Eitan Grinspun},
  booktitle = {The Eleventh International Conference on Learning Representations},
  title     = {{CROM}: Continuous Reduced-Order Modeling of {PDE}s Using Implicit Neural Representations},
  year      = {2023},
  file      = {:ml/ode-ode-related/2023_CROM\: Continuous Reduced-Order Modeling of PDEs Using Implicit Neural Representations.pdf:PDF},
  url       = {https://openreview.net/forum?id=FUORz1tG8Og},
}

@Article{Chen2020,
  author    = {Chen, Ricky T. Q. and Amos, Brandon and Nickel, Maximilian},
  title     = {Learning Neural Event Functions for Ordinary Differential Equations},
  year      = {2020},
  note      = {arXiv:2011.03902},
  copyright = {arXiv.org perpetual, non-exclusive license},
  doi       = {10.48550/ARXIV.2011.03902},
  file      = {:ml/ode-ode-related/2021_Learning Neural Event Functions for Ordinary Differential Equations.pdf:PDF},
  keywords  = {Machine Learning (cs.LG), Machine Learning (stat.ML), FOS: Computer and information sciences, FOS: Computer and information sciences},
  publisher = {arXiv},
}

@InProceedings{Ballini2025,
  author     = {Ballini, E. and Cominelli, A. and Dovera, L. and Forello, A. and Formaggia, L. and Fumagalli, A. and Nardean, S. and Scotti, A. and Zunino, P.},
  booktitle  = {SPE Reservoir Simulation Conference},
  title      = {Enhancing Computational Efficiency of Numerical Simulation for Subsurface Fluid-Induced Deformation Using Deep Learning Reduced Order Models},
  year       = {2025},
  month      = mar,
  publisher  = {SPE},
  series     = {25RSC},
  collection = {25RSC},
  doi        = {10.2118/223897-ms},
  file       = {:numerics-poro-numerics/2025_Ballini_et_al_2025_RSC_conf.pdf:PDF;:numerics-poro-numerics/2025_Ballini_et_al_2025_RSC_conf_original.pdf:PDF},
}

@Article{Floryan2022,
  author    = {Floryan, Daniel and Graham, Michael D.},
  journal   = {Nature Machine Intelligence},
  title     = {Data-driven discovery of intrinsic dynamics},
  year      = {2022},
  issn      = {2522-5839},
  month     = dec,
  number    = {12},
  pages     = {1113--1120},
  volume    = {4},
  doi       = {10.1038/s42256-022-00575-4},
  file      = {:ROM/non-linear/2023_Data-driven discovery of intrinsic dynamics.pdf:PDF},
  publisher = {Springer Science and Business Media LLC},
}

@Book{Lambert1999,
  author    = {Lambert, John D.},
  publisher = {Wiley},
  title     = {Numerical methods for ordinary differential systems},
  year      = {1999},
  isbn      = {0471929905},
  file      = {:numerical/Lambert - Numerical Methods for Ordinary Differential Systems_1991.pdf:PDF},
  subtitle  = {The initial value problem},
}

@Book{Marsden2012,
  author    = {Marsden, Jerrold E.},
  editor    = {Thomas J. R. Hughes},
  publisher = {Dover Publications},
  title     = {Mathematical Foundations of Elasticity},
  year      = {2012},
  address   = {Newburyport},
  isbn      = {9780486678658},
  note      = {Description based upon print version of record},
  series    = {Dover Civil and Mechanical Engineering},
  file      = {:mechanics/Marsden, Huges-Mathematical_Foundations_of_Elasticity.pdf:PDF},
  pagetotal = {1190},
  ppn_gvk   = {813298105},
}

@Article{DAmbrosio2021,
  author    = {D’Ambrosio, Raffaele and Giovacchino, Stefano Di},
  journal   = {Communications in Nonlinear Science and Numerical Simulation},
  title     = {Mean-square contractivity of stochastic theta-methods},
  year      = {2021},
  issn      = {1007-5704},
  month     = may,
  pages     = {105671},
  volume    = {96},
  doi       = {10.1016/j.cnsns.2020.105671},
  file      = {:numerical/2021_Mean-square contractivity of stochastic.pdf:PDF},
  publisher = {Elsevier BV},
}

@Article{Steefel1994,
  author    = {Steefel, C. I. and Lasaga, A. C.},
  journal   = {American Journal of Science},
  title     = {A coupled model for transport of multiple chemical species and kinetic precipitation/dissolution reactions with application to reactive flow in single phase hydrothermal systems},
  year      = {1994},
  issn      = {0002-9599},
  month     = may,
  number    = {5},
  pages     = {529--592},
  volume    = {294},
  doi       = {10.2475/ajs.294.5.529},
  file      = {:poromechanics-geoscience/1994_A Coupled Model for Transport of Multiple Chemical-Species and Kinetic Precipitation Dissolution Reactions with.pdf:PDF},
  publisher = {American Journal of Science (AJS)},
}

@Misc{Li2023,
  author    = {Li, Li'ang and Duan, Yifei and Ji, Guanghua and Cai, Yongqiang},
  note      = {arXiv:2305.18460},
  title     = {Minimum Width of Leaky-ReLU Neural Networks for Uniform Universal Approximation},
  year      = {2023},
  copyright = {arXiv.org perpetual, non-exclusive license},
  doi       = {10.48550/ARXIV.2305.18460},
  file      = {:ml/approxth/2023_Minimum Width of Leaky-ReLU Neural Networks for Uniform Universal Approximation.pdf:PDF},
  keywords  = {Machine Learning (cs.LG), Numerical Analysis (math.NA), FOS: Computer and information sciences, FOS: Computer and information sciences, FOS: Mathematics, FOS: Mathematics, 68T07, 65P99, 65Z05, 41A65},
  publisher = {arXiv},
}

@Article{Ziarelli2025,
  author    = {Ziarelli, Giovanni and Verani, Nicola Parolini and Marco},
  journal   = {Numerical Mathematics: Theory, Methods and Applications},
  title     = {Learning Epidemic Trajectories Through Kernel Operator Learning: From Modelling to Optimal Control},
  year      = {2025},
  issn      = {2079-7338},
  month     = apr,
  doi       = {10.4208/nmtma.oa-2024-0097},
  publisher = {Global Science Press},
}

@PhdThesis{Kidger2022,
  author = {Kidger, Patrick},
  school = {University of Oxford},
  title  = {On Neural Differential Equations},
  year   = {2021},
  type   = {{Ph.D.} thesis},
}

@InProceedings{He2016,
  author    = {He, Kaiming and Zhang, Xiangyu and Ren, Shaoqing and Sun, Jian},
  booktitle = {2016 IEEE Conference on Computer Vision and Pattern Recognition (CVPR)},
  title     = {Deep Residual Learning for Image Recognition},
  year      = {2016},
  pages     = {770-778},
  doi       = {10.1109/CVPR.2016.90},
  file      = {:ml/2016_Deep_Residual_Learning_for_Image_Recognition.pdf:PDF},
  keywords  = {Training;Degradation;Complexity theory;Image recognition;Neural networks;Visualization;Image segmentation},
}

@Article{Farenga2025,
  author    = {Farenga, Nicola and Fresca, Stefania and Brivio, Simone and Manzoni, Andrea},
  journal   = {Neural Networks},
  title     = {On latent dynamics learning in nonlinear reduced order modeling},
  year      = {2025},
  issn      = {0893-6080},
  month     = may,
  pages     = {107146},
  volume    = {185},
  doi       = {10.1016/j.neunet.2025.107146},
  file      = {:ROM/non-linear/2025_On latent dynamics learning in nonlinear reduced order modeling.pdf:PDF},
  publisher = {Elsevier BV},
}

@Book{munkres2000topology,
  author    = {Munkres, J.R.},
  publisher = {Prentice Hall, Incorporated},
  title     = {Topology},
  year      = {2000},
  isbn      = {9780131816299},
  series    = {Featured Titles for Topology},
  file      = {:mathematics/Munkres - Topology (2000).pdf:PDF},
  lccn      = {99052942},
  url       = {https://books.google.it/books?id=XjoZAQAAIAAJ},
}

@Book{Lee2000,
  author    = {Lee, John M.},
  publisher = {Springer New York},
  title     = {Introduction to Topological Manifolds},
  year      = {2000},
  isbn      = {0-387-95026-5},
  journal   = {Graduate Texts in Mathematics},
}

@Article{Hanin2017,
  author    = {Hanin, Boris and Sellke, Mark},
  journal   = {arXiv:1710.11278},
  title     = {Approximating Continuous Functions by {ReLU} Nets of Minimal Width},
  year      = {2017},
  copyright = {arXiv.org perpetual, non-exclusive license},
  doi       = {10.48550/ARXIV.1710.11278},
  file      = {:ml/approxth/2018_APPROXIMATING CONTINUOUS FUNCTIONS BY RELU NETS OF MINIMAL WIDTH.pdf:PDF},
  keywords  = {Machine Learning (stat.ML), Computational Complexity (cs.CC), Machine Learning (cs.LG), Combinatorics (math.CO), Statistics Theory (math.ST), FOS: Computer and information sciences, FOS: Computer and information sciences, FOS: Mathematics, FOS: Mathematics},
  publisher = {arXiv},
}

@Article{Barnett2023,
  author    = {Barnett, Joshua and Farhat, Charbel and Maday, Yvon},
  journal   = {Journal of Computational Physics},
  title     = {Neural-network-augmented projection-based model order reduction for mitigating the Kolmogorov barrier to reducibility},
  year      = {2023},
  issn      = {0021-9991},
  month     = nov,
  pages     = {112420},
  volume    = {492},
  doi       = {10.1016/j.jcp.2023.112420},
  file      = {:ROM/2023_Neural-network-augmented projection-based model order.pdf:PDF},
  publisher = {Elsevier BV},
}

@Article{Uy2023,
  author    = {Uy, Wayne Isaac Tan and Hartmann, Dirk and Peherstorfer, Benjamin},
  journal   = {Computers {\&} Mathematics with Applications},
  title     = {Operator inference with roll outs for learning reduced models from scarce and low-quality data},
  year      = {2023},
  issn      = {0898-1221},
  month     = sep,
  pages     = {224--239},
  volume    = {145},
  doi       = {10.1016/j.camwa.2023.06.012},
  file      = {:ROM/2023_Operator inference with roll outs for learning reduced models from scarce and low-quality data.pdf:PDF},
  publisher = {Elsevier BV},
}

@Misc{Pintore2024,
  author    = {Pintore, Moreno and Després, Bruno},
  title     = {Computable Lipschitz Bounds for Deep Neural Networks},
  year      = {2024},
  copyright = {Creative Commons Attribution 4.0 International},
  doi       = {10.48550/ARXIV.2410.21053},
  file      = {:ml/2024_Computable Lipschitz Bounds for Deep Neural Networks.pdf:PDF},
  keywords  = {Machine Learning (cs.LG), Numerical Analysis (math.NA), Machine Learning (stat.ML), FOS: Computer and information sciences, FOS: Computer and information sciences, FOS: Mathematics, FOS: Mathematics, 26A16, 68Q17, 68T99, 92B20},
  publisher = {arXiv},
}

@Article{Ballini2026a,
  author    = {Ballini, E. and Cominelli, A. and Dovera, L. and Forello, A. and Formaggia, L. and Fumagalli, A. and Nardean, S. and Scotti, A. and Zunino, P.},
  journal   = {Computational Geosciences},
  title     = {Enhancing computational efficiency of numerical simulation for subsurface fluid-induced deformation using deep learning reduced order models},
  year      = {2026},
  issn      = {1573-1499},
  month     = mar,
  number    = {2},
  volume    = {30},
  doi       = {10.1007/s10596-026-10409-6},
  file      = {:poromechanics-geoscience/2026_Enhancing computational efficiency of numerical.pdf:PDF},
  publisher = {Springer Science and Business Media LLC},
}

\appendix
\appendixsectionformat 


\section{Lipschitz constants and stability}\label{sec:computations}

\paragraph{Lipschitz constant $L_{F_n}$.}
Let $M_{y(x)} = \sup_x \|y(x)\|$ and $L_y$ the Lipschitz constant of the generic function $y$. We have
\begin{align*}
	&\|F_n(u_{n1}) - F_n(u_{n2})\| = \|J_{\Psi'}(\Psi(u_{n1}))F_N(\Psi(u_{n1})) - J_{\Psi'}(\Psi(u_{n2}))F_N(\Psi(u_{n2}))\| \leq \\
    & \leq  \|J_{\Psi'}(\Psi(u_{n1}))F_N(\Psi(u_{n1})) - J_{\Psi'}(\Psi(u_{n1}))F_N(\Psi(u_{n2})) + J_{\Psi'}(\Psi(u_{n1}))F_N(\Psi(u_{n2})) - J_{\Psi'}(\Psi(u_{n2}))F_N(\Psi(u_{n2}))\| \leq \\
    & \leq  \|J_{\Psi'}(\Psi(u_{n1}))[F_N(\Psi(u_{n1})) - F_N(\Psi(u_{n2}))] \| + \|F_N(\Psi(u_{n2}))[J_{\Psi'}(\Psi(u_{n1})) - J_{\Psi'}(\Psi(u_{n2}))]\| \leq \\
	&\leq (M_{J_{\Psi'}}L_{F_N} + M_{F_N}L_{J_{\Psi'}}) \| \Psi(u_{n1}) - \Psi(u_{n2})\| \leq \\
	& \leq \underbrace{(M_{J_{\Psi'}}L_{F_N} + M_{F_N}L_{J_{\Psi'}}) L_\Psi}_{= \overline{L}_{F_n,\mu}} \|u_{n1} - u_{n2}\|,
\end{align*}
where $\overline{L}_{F_n,\mu}$ is a upper bound for $L_{F_n,\mu}$.

\paragraph{Lipschitz constant $L_{F_n}$, scaled \node.}
We scale the equations by $K>0$:
\begin{align}\label{eq:scaled_node}
	\begin{aligned}
		&K\dot{u}_n = K J_{\Psi'} F_N(\Psi(u_n)), \\
		&Ku_n(0) = K \Psi'(u_{N0}).
	\end{aligned}
\end{align}
Replacing $w_n = K u_n$ into \eqref{eq:scaled_node} we have the dynamics of the scaled $n$-solution:
\begin{align}\label{eq:scaled_node_K}
	\begin{aligned}
		&\dot{w}_n = K J_{\Psi'} F_N(\Psi(1/K w_n)) \coloneqq \Tilde{F}_n(w_n), \\
		&w_n(0) = K \Psi'( u_{N0} ),	
	\end{aligned}
\end{align}
from which we derive:
\begin{align*}
	&\|\Tilde{F}_n(w_{n,1})-\Tilde{F}_n(w_{n,2})\| = K \|F_n(1/K w_{n,1})-F_n(1/K w_{n,1})\|\leq \\
	&\leq K L_{F_n} \|1/K w_{n,1}- 1/K w_{n,2}\| = L_{F_n} \|w_{n,1} - w_{n,2}\|.
\end{align*}
Therefore $L_{\Tilde{F}_n} = L_{F_n}$.

A modification of the Lipschitz constant can be made by using a nonlinear $g$, an invertible function with Jacobian $J_g$ and such that $\Psi\circ g^{-1} \circ g \circ\Psi'$ represents the autoencoder $\Psi\circ\Psi'$ but passing through different $n$-solutions. Let now $w_n = g(u_n)$, then: 
\begin{align*}
	&\dot w_n = J_g F_n(g^{-1}(w_n)) \coloneq \Tilde{F}_n(w_n), \\
	&w_n(0) = g(\Psi'(u_{N0})),
\end{align*}
where, again, an abuse of notation is done in the definition of $\Tilde{F}_n$. In general, depending on $g$, $L_{\Tilde{F}_n}$ differs from $L_{F_n}$.



\paragraph{Lyapunov stability.} 
First, we move the problem on the \Node\ to the \node: $\| w_N - u_N \| \leq L_\Psi \| w_n - u_n \|$.
Integrating the \node\ and computing the difference between the reference solution and the perturbed one at time $t$ reads:
\begin{equation*}
	w_n - u_n = \Psi'(u_{N0} + \Delta_0) + \delta_0 - \Psi'(u_{N0}) + \int_{t_0}^t \left(F_n(w_n) + J_{\Psi'}\Delta + \delta - F_n(u_n)\right).
\end{equation*}
Computing the norm and applying the triangular inequality:
\begin{align*}
	&\|w_n - u_n\| \leq \underbrace{\|\Psi'(u_{N0}+\Delta_0) - \Psi'(u_{N0}) + \delta_0\|}_{A} + \underbrace{\| \int_{0}^t J_{\Psi'}(u_N(t)) \Delta\|}_{B} + \underbrace{\|\int_{0}^t \delta\|}_{C} + \underbrace{\|\int_{0}^t F_n(w_n) - F_n(u_n)\|}_{D}.
\end{align*}
Each term can be bounded as follows:
\begin{align*}	
	& A \leq L_{\Psi'}\|\Delta_0\| + \|\delta_0\|, \\
	& B \leq M_{J_{\Psi'}}\|\Delta\| t, \\
	& C \leq \|\delta\| t, \\ 
	& D \leq \int_{0}^t L_{F_n}\|w_n-u_n\|. \\
\end{align*}
Applying the Gr\"onwall's lemma and the Lipschitz continuity of $\Psi$, we get the bound
\begin{align*}
	\|w_N - u_N\| \leq &L_\Psi\Big( L_{\Psi'}\|\Delta_0\| + \|\delta_0\| + \big(M_{J_{\Psi'}}\|\Delta\| + \|\delta\|\big)t \Big) e^{L_{F_n,\mu}t}. 
\end{align*}
Note that the autoencoder affects $L_{F_{n,\mu}}$, see \Cref{th:L_{F_n}}.

\paragraph{Lyapunov stability - Neural networks.}
Let us now consider an unperturbed \Node, so $\Delta = 0$ and $\Delta_0 = 0$. Interpreting $w_n$ and the reconstructed $w_N = N_\Psi(w_n)$ as perturbed solutions due to the use of neural networks, we have:
\begin{align*}
	&\|w_n - u_n\| \leq \underbrace{\|\Psi'(u_{N0}) - \kappa(u_{n0}) - \Psi'(u_{N0})\|}_{E} + \underbrace{\|\int_{0}^t \omega(w_n)\|}_{F} + \|\int_{0}^t F_n(w_n) - F_n(u_n)\|
\end{align*}
We can bound the first two terms with
\begin{align*}
	&E \leq \eps_{\Psi'}, \\
	&F \leq \eps_{F_n}t.
\end{align*}
Therefore, we can apply the Gr\"onwall's lemma. Using the Lipschitz continuity of $\Psi$ we have:
%
\begin{align*}
    &\|w_N - u_N\| \leq L_\Psi \|w_n - u_n\| + \eps_\Psi \leq \\
    &\leq L_\Psi\Big( \eps_{\Psi'} + \eps_{F_n}t \Big) e^{L_{F_n}t} + \eps_\Psi.
\end{align*}

\section{Neural networks' architectures}\label{sec:architectures}
We report in Tab.~\ref{tab:case_1_architecture}  the neural networks' architectures used in \Cref{sec:case_1_a_stability}. With ``Linear(Affine) $x \times y$'' we denote a linear(affine) transformation from $\mathbb{R}^x$ to $\mathbb{R}^y$. 
\begin{table}[H]
	\centering
	\begin{minipage}[t]{0.48\textwidth}
		\centering
		\vspace{-2cm}
		\begin{tabular}{llll}
			& \textbf{layer}        & \textbf{$|\myw|$}    & \textbf{$|\myw|_{\text{tot}}$} \\
			\hline \\
			\multirow{1}{*}{$N_{\Psi'}$} & Linear 3$\times$2  & \multirow{1}{*}{\numprint{6}} & \multirow{3}{*}{\numprint{12}}  \\
			& & & \\
			\multirow{1}{*}{$N_{\Psi}$} & Linear 2$\times$3   & \multirow{1}{*}{\numprint{6}} &                          \\
		\end{tabular}
	\end{minipage}
	\hfill
    \begin{minipage}[t]{0.48\textwidth}
		\centering
		\begin{tabular}{llll}
			& \textbf{layer}        & \textbf{$|\myw|$}    & \textbf{$|\myw|_{\text{tot}}$} \\
			\hline \\
			\multirow{1}{*}{$N_{\Psi'}$} & Linear 3$\times$2 & \multirow{1}{*}{\numprint{6}} & \multirow{6}{*}{\numprint{7678}}  \\
			& & & \\
			\multirow{5}{*}{$N_{\Psi}$} & Linear 2$\times$3, PReLU     & \multirow{5}{*}{\numprint{7672}} &                          \\
			& Affine 3$\times$30, PReLU  		&                     &                          \\
			& 8$\times$Affine 30$\times$30, PReLU  &                   &                          \\
			& Affine 30$\times$3, PReLU  	&                         &                          \\
			& Affine 3$\times$3  			&                         &                          \\
		\end{tabular}
	\end{minipage}
	\caption{Neural networks' architectures used in \Cref{sec:case_1_a_stability}. $|\myw|$ is the number of trainable weights for the single neural network, $|\myw|_{\text{tot}}$ the total number of trainable weights.}
	\label{tab:case_1_architecture}
\end{table}

\vspace{\baselineskip}
We report in Tab.~\ref{tab:case_2_convergence} the neural networks' architectures used in \Cref{sec:case_2_time_convergence}. $N_{F_n}$ is trained using two different strategies, as detailed in \Cref{sec:case_2_time_convergence}.
\begin{table}[H]
	\centering
    \begin{minipage}[t]{0.48\textwidth}
    \vspace{0pt}
	\begin{tabular}{llll}
		& \textbf{layer}        & \textbf{$|\myw|$}    & \textbf{$|\myw|_{\text{tot}}$} \\
		\hline \\
		\multirow{3}{*}{$N_{\Psi'}$} & Affine 3$\times$10, ELU & \multirow{3}{*}{\numprint{390}} & \multirow{13}{*}{\numprint{6974}}  \\
		& 3$\times$Affine 10$\times$10, ELU & & \\
        & Linear 10$\times$2 & & \\
        & & & \\
        & & & \\
		\multirow{5}{*}{$N_{\Psi}$} & Linear 2$\times$3, PReLU     & \multirow{5}{*}{\numprint{427}} &                          \\
		& Affine 3$\times$10, PReLU  		&                     &                          \\
		& 4$\times$Affine 10$\times$10, PReLU  &                   &                          \\
		& Affine 10$\times$3, PReLU  	&                         &                          \\
		& Affine 3$\times$3  			&                         &                          \\
		& & & \\
		\multirow{3}{*}{$N_{F_n}$} & Affine 3$\times$10, PReLU     & \multirow{3}{*}{\numprint{6157}} &                          \\
		& 10$\times$Affine 24$\times$24, PReLU  &                   &                          \\
		& Affine 24$\times$2  			&                         &                          
	\end{tabular}
    \end{minipage}
    \hfill
    \begin{minipage}[t]{0.48\textwidth}
    \vspace{0pt}
	\begin{tabular}{llll}
		& \textbf{layer}        & \textbf{$|\myw|$}    & \textbf{$|\myw|_{\text{tot}}$} \\
		\hline \\
		\multirow{1}{*}{$N_{\Psi'}$} & Linear 3$\times$2 & \multirow{1}{*}{\numprint{6}} & \multirow{12}{*}{\numprint{8463}}  \\
		& & & \\
		\multirow{5}{*}{$N_{\Psi}$} & Linear 2$\times$3, PReLU     & \multirow{5}{*}{\numprint{7672}} &                          \\
		& Affine 3$\times$30, PReLU  		&                     &                          \\
		& 8$\times$Affine 30$\times$30, PReLU  &                   &                          \\
		& Affine 30$\times$3, PReLU  	&                         &                          \\
		& Affine 3$\times$3  			&                         &                          \\
		& & & \\
		\multirow{4}{*}{$N_{F_n}$} & Affine 3$\times$3, PReLU     & \multirow{4}{*}{\numprint{785}} &                          \\
		& 8$\times$Affine 9$\times$9, PReLU  &                   &                          \\
		& Affine 9$\times$3, PReLU  	&                         &                          \\
		& Affine 3$\times$2  			&                         &                          \\
	\end{tabular}
    \end{minipage}
	\caption{Neural networks' architectures used in \Cref{sec:case_2_time_convergence}. $|\myw|$ is the number of trainable weights for the single neural network, $|\myw|_{\text{tot}}$ the total number of trainable weights. \rev{Left table: networks used in \Cref{sec:test_convergence_analytical}. Right table: networks used in \Cref{sec:test_convergence_generic}}.}
	\label{tab:case_2_convergence}
\end{table}

\vspace{\baselineskip}
We report in Tab.~\ref{tab:case_3_chemistry} the neural networks' architectures used in \Cref{sec:case_3_chemistry}. In this case, the activation functions used in the encoder are smooth to ensure a continuous $J_{\Psi'}$ and, consequently, a continuous $F_n$.
\begin{table}[H]
	\centering
	\begin{tabular}{llll}
		& \textbf{layer}        & \textbf{$|\myw|$}    & \textbf{$|\myw|_{\text{tot}}$} \\
		\hline \\
		\multirow{3}{*}{$N_{\Psi'}$} & Affine 20$\times$21, ELU & \multirow{3}{*}{\numprint{2814}} & \multirow{14}{*}{\numprint{9515}}  \\
		& 5$\times$Affine 21$\times$21, ELU  &                   &                          \\
		& Linear 21$\times$ 3, ELU  &                   &                          \\
		& & & \\
		\multirow{5}{*}{$N_{\Psi}$} & Linear 3$\times$20, PReLU     & \multirow{5}{*}{\numprint{4412}} &                          \\
		& Affine 20$\times$21, PReLU  		&                     &                          \\
		& 7$\times$Affine 21$\times$21, PReLU  &                   &                          \\
		& Affine 21$\times$20, PReLU  	&                         &                          \\
		& Affine 20$\times$20  			&                         &                          \\
		& & & \\
		\multirow{4}{*}{$N_{F_n}$} & Affine 5$\times$20, PReLU     & \multirow{4}{*}{\numprint{2289}} &                          \\
		& 5$\times$Affine 20$\times$20, PReLU  &                   &                          \\
		& Affine 9$\times$3, PReLU  	&                         &                          \\
		& Affine 3$\times$3  			&                         &                          \\
	\end{tabular}
	\caption{Neural networks' architectures used in \Cref{sec:case_3_chemistry}. $|\myw|$ is the number of trainable weights for the single neural network, $|\myw|_{\text{tot}}$ the total number of trainable weights.}
	\label{tab:case_3_chemistry}
\end{table}

\rev{
\vspace{\baselineskip}
We report in Tab.~\ref{tab:case_4_burgers} the neural networks' architectures used in \Cref{sec:case_4_burgers}. 
\begin{table}[H]
	\centering
	\begin{tabular}{llll}
		& \textbf{layer}        & \textbf{$|\myw|$}    & \textbf{$|\myw|_{\text{tot}}$} \\
		\hline \\
		\multirow{2}{*}{$N_{\Psi'}$} & 2$\times$Affine 50$\times$50, ELU & \multirow{2}{*}{\numprint{5250}} & \multirow{8}{*}{\numprint{28097}}  \\
		& Linear 50$\times$ 3      &                   &                          \\
		& & & \\
		\multirow{3}{*}{$N_{\Psi}$} & Linear 3$\times$50, PReLU     & \multirow{3}{*}{\numprint{20558}} &                          \\
		& 8$\times$Affine 50$\times$50, PReLU  &                   &                          \\
		& Affine 50$\times$50  			&                         &                          \\
		& & & \\
		\multirow{3}{*}{$N_{F_n}$} & Affine 5$\times$20, PReLU     & \multirow{3}{*}{\numprint{2289}} &                          \\
		& 5$\times$Affine 20$\times$20, PReLU  &                   &                          \\
		& Affine 20$\times$3  			&                         &                          \\
	\end{tabular}
	\caption{Neural networks' architectures used in \Cref{sec:case_4_burgers}. $|\myw|$ is the number of trainable weights for the single neural network, $|\myw|_{\text{tot}}$ the total number of trainable weights.}
	\label{tab:case_4_burgers}
\end{table}
}

\clearpage

\end{document}